\newlength{\aufzleft}
\newenvironment{equi}{\begin{list}{}{\setlength{\listparindent}{0pt}\setlength{\itemsep}{\topsep}\setlength{\labelwidth}{4.1ex}\setlength{\aufzleft}{\labelsep}\addtolength{\aufzleft}{\labelwidth}\setlength{\leftmargin}{\aufzleft}}}{\end{list}}
\newtheoremstyle{bracket}{1ex}{2ex}{\rm}{}{\bfseries}{}{0.8em}{\thmnumber{(#2)}}
\newtheoremstyle{example}{1ex}{2ex}{\rm}{}{\bfseries}{}{0.8em}{\thmnumber{(#2)}\thmname{ #1}}
\newtheoremstyle{thm}{1ex}{2ex}{\itshape}{}{\bfseries}{}{0.9em}{\thmnumber{(#2)}\thmname{ #1}\thmnote{ (#3)}}
\theoremstyle{bracket}
\newtheorem{no}{}[section]
\theoremstyle{example}
\newtheorem{example}[no]{Example}
\newtheorem{examples}[no]{Examples}
\theoremstyle{thm}
\newtheorem{lemma}[no]{Lemma}
\newtheorem{proposition}[no]{Proposition}
\newtheorem{theorem}[no]{Theorem}
\newtheorem{corollary}[no]{Corollary}
\DeclareMathOperator{\adj}{adj}
\DeclareMathOperator{\card}{Card}
\DeclareMathOperator{\cl}{cl}
\DeclareMathOperator{\cone}{cone}
\DeclareMathOperator{\conv}{conv}
\DeclareMathOperator{\exte}{ex}
\DeclareMathOperator{\face}{face}
\DeclareMathOperator{\fr}{fr}
\DeclareMathOperator{\inn}{in}
\DeclareMathOperator{\ke}{Ker}
\DeclareMathOperator{\pface}{pface}
\newcommand{\dd}{\mathfrak{D}}
\newcommand{\ff}{\mathfrak{F}}
\newcommand{\eps}{\varepsilon}
\newcommand{\thet}{\vartheta}
\newcommand{\sig}{\Sigma}
\newcommand{\Tau}{{\rm T}}
\newcommand{\Nn}{\mathbbm{N}}
\newcommand{\R}{\mathbbm{R}}
\newcommand{\C}{\mathbbm{C}}
\newcommand{\Q}{\mathbbm{Q}}
\newcommand{\Z}{\mathbbm{Z}}
\newcommand{\fff}{\overline{\mathfrak{F}}}
\newcommand{\hap}{\widehat{p}}
\newcommand{\omegaq}{\overline{\Omega}}
\newcommand{\psiquer}{\overline{\Psi}}
\newcommand{\sighut}{\widehat{\Sigma}}
\newcommand{\sigquer}{\overline{\Sigma}}
\newcommand{\tauhut}{\widehat{\Tau}}
\newcommand{\tauquer}{\overline{\Tau}}
\newcommand{\V}{\mathfrak{V}}
\newcommand{\dfgl}{\mathrel{\mathop:}=}
\newcommand{\fleq}{\preccurlyeq}
\newcommand{\flneq}{\prec}
\newcommand{\norm}{\parallel\!\cdot\!\parallel}
\newcommand{\nrm}[1]{||#1||}
\newcommand{\streckel}{[\hskip-0.33ex[}
\newcommand{\strecker}{]\hskip-0.33ex]}
\begin{document}

\title{Completions of Fans}
\author{Fred Rohrer}
\address{Institute of Mathematics\\Vietnam Academy of Science and Technology\\{\fontencoding{T5}\selectfont 18 Ho\`ang Qu\'\ocircumflex{}c Vi\d\ecircumflex{}t\\10307 H\`a~N\d\ocircumflex{}i\\Vi\d\ecircumflex{}t Nam}}
\email{fredrohrer0@gmail.com}
\thanks{The author was supported by the Swiss National Science Foundation.}
\subjclass[2010]{Primary 52B99; Secondary 52A21.}
\keywords{Cone, fan, completion}


\begin{abstract}
In a finite-dimensional real vector space furnished with a rational structure with respect to a subfield of the field of real numbers, every (simplicial) rational semifan is contained in a complete (simplicial) rational semifan. In this paper this result is proved constructively on use of techniques from polyhedral geometry.
\end{abstract}

\maketitle


\section*{Introduction}

Let $V$ be a finite-dimensional $\R$-vector space. A fan in $V$ is defined as a finite set $\sig$ of sharp polycones in $V$ (that is, intersections of finitely many closed linear halfspaces in $V$ that do not contain a line), closed under taking faces (that is, intersections of a polycone $\sigma$ with a linear hyperplane $H$ such that $\sigma$ lies on one side of $H$) and such that the intersection of two polycones in $\sig$ is a face of both. A fan $\sig$ in $V$ is called complete if its support $\bigcup\sig$ equals $V$. A natural question is whether every fan has a completion, that is, is contained in a complete fan.

This question is also of fundamental importance in toric algebraic geometry. Namely, a fan $\sig$ that is rational with respect to some $\Q$-structure on $V$ corresponds to a toric variety $X_{\sig}$ over $\C$, and this variety is complete (in the sense of algebraic geometry) if and only if the fan $\sig$ is complete or empty. Hence, a completion $\sighut$ of $\sig$ yields an open immersion $X_{\sig}\rightarrowtail X_{\sighut}$ from $X_{\sig}$ into a complete toric variety. More generally, if $R$ is an arbitrary commutative ring then the fan $\sig$ gives rise to a toric $R$-scheme $X_{\sig}(R)$, and a completion $\sighut$ of $\sig$ yields an open immersion $X_{\sig}(R)\rightarrowtail X_{\sighut}(R)$ from $X_{\sig}(R)$ into a proper toric $R$-scheme. In this sense, completions of fans correspond to ``compactifications'' in toric geometry.

Interestingly, it was by this detour into algebraic geometry that the first proof of existence of completions of fans was given. More precisely, in 1974 Sumihiro published his Equivariant Compactification Theorem (\cite{sumihiro}). But it seems to be mentioned for the first time only in 1988 in Oda's comprehensive book on toric varieties (\cite[p.~17]{oda}) that on use of Sumihiro's theorem on can get a proof of the existence of completions of fans that are rational with respect to some $\Q$-structure. However, this proof is not constructive, and it leaves us wondering if such a simple statement about polyhedral geometry can be proven without relying on the heavy machinery from algebraic geometry. Concerning this point, Oda stated in \textit{loc.cit.} that ``no systematic way of constructing [a completion of a given fan] seems to be known in general''. Around the same time, Ewald stated in \cite{ew0} the existence of completions of fans without proof, but in a way that suggests a direct and constructive proof. In 1996 he indeed sketched such a proof in his textbook on combinatorial convexity (\cite[III.2.8]{ew}), and finally ten years later Ewald and Ishida published in \cite{ei} a refined version of this construction. (In \textit{loc.cit.} a second proof is given that imitates combinatorially the algebro-geometric proof and thus is not what we are looking for.) Although the construction of Ewald and Ishida relies on very natural ideas it is technically quite involved. Moreover, the fact that there do not exist canonical completions (as is seen immediately with some easy examples) suggests that every construction of completions will be complicated.
 
In this paper, we take up the ideas of Ewald and Ishida and modify them in order to arrive at a simplified construction of completions of fans. Besides greater simplicity (for example in avoiding the polyhedral complexes obtained by projecting a fan and hence avoiding the construction of the map $\phi_a$ (\cite[Lemma 1.3]{ei})) our construction has several further advantages. First, some pathological fans (for example fans in a space of dimension greater than $2$ having a $1$-dimensional maximal cone) do not have to be treated separately by means of ad hoc constructions. Second, our arguments rely to a large extent on the canonical structure of topological vector space on the ambient space $V$ and avoid the use of some norm on $V$ -- this improves clarity and might moreover be useful for studying fans in more general settings. Third, our approach is very conceptual and thus might be helpful in studying completions of fans preserving additional properties.

\smallskip

We now give an overview of the contents of this article. In Section \ref{sec1} we review the terminology and collect basic properties of polycones and fans. Moreover, we introduce and study a notion of direct sum of polycones that yields a reasonable decomposition into indecomposable polycones. Section \ref{sec3} treats some topological properties of fans, the main result being a combinatorial description of the topological frontier of the support of a fan. In Section \ref{sec5} we introduce the notions of relatively simplicial, separable and tightly separable extensions of fans, which are crucial for our construction of completions. We treat three particular constructions of extensions of fans in Section \ref{sec6}, and we apply them in Section \ref{sec9} in order to prove existence of completions by an inductive and recursive construction.

\smallskip

The content of this article is part of the author's doctoral dissertation \cite{diss} which is available at {\tt www.dissertationen.uzh.ch}.


\section{Polycones and fans}\label{sec1}

\textit{Throughout this article let $V$ be an $\R$-vector space of finite dimension $n$, and let $K\subseteq\R$ be a subfield. If not specified otherwise a morphism is a morphism of $\R$-vector spaces, a subspace is a sub-$\R$-vector space, an affine subspace is an affine sub-$\R$-space, and a section is an $\R$-linear section.}\medskip

We fix some terminology and review the notion of rational structures on finite-dimensional $\R$-vector spaces from \cite[II.8]{a}.\medskip

\begin{no}
We denote by $V^*$ and $V^{**}$ the dual and bidual of $V$, and we identify $V$ and $V^{**}$ by means of the canonical isomorphism $c_V:V\overset{\cong}\longrightarrow V^{**}$. For $A\subseteq V$ we denote by $\langle A\rangle$ the subspace of $V$ generated by $A$, and by $\dim(A)$ the dimension of the affine sub-$\R$-space of $V$ generated by $A$. Moreover, we write $A^{\perp,V}$ and $A^{\vee,V}$ (or just $A^{\perp}$ and $A^{\vee}$) for the orthogonal $\{u\in V^*\mid u(A)\subseteq 0\}$ and the dual $\{u\in V^*\mid u(A)\subseteq\R_{\geq 0}\}$ of $A$; in case $A=\{x\}$ we write $x^{\perp}$ and $x^{\vee}$.

Furthermore, we furnish $V$ with its canonical topology. For $A\subseteq V$ we denote by $\inn_V(A)$, $\cl_V(A)$ and $\fr_V(A)$ (or just by $\inn(A)$, $\cl(A)$ and $\fr(A)$) the interior, closure and frontier of $A$. For $x\in V$ we denote by $\V_V(x)$ (or just by $\V(x)$) the filter of neighbourhoods of $x$.

By means of scalar restriction we consider $V$ as a $K$-vector space. A \textit{$K$-structure on $V$} is a sub-$K$-vector space $W\subseteq V$ such that the canonical morphism $\R\otimes_KW\rightarrow V$ with $a\otimes x\mapsto ax$ is an isomorphism, or -- equivalently -- that $\langle W\rangle=V$ and $\dim_K(W)=n$.
\end{no}

\textit{From now on let $W$ be a $K$-structure on $V$.}\medskip

\begin{no}\label{compl}
A subspace of $V$ is called \textit{$W$-rational} if it has a basis contained in $W$, and an affine subspace of $V$ is called \textit{$W$-rational} if it is the translation by an $x\in W$ of a $W$-rational subspace of $V$. A $W$-rational (affine) hyperplane in $V$ is called an \textit{(affine) $W$-hyperplane in $V$.} A closed (affine) halfspace defined by an (affine) $W$-hyperplane in $V$ is called an \textit{(affine) $W$-halfspace in $V$.} If $V'\subseteq V$ is a $W$-rational subspace, then $W'\dfgl W\cap V'$ is a $K$-structure on $V'$ called \textit{the induced $K$-structure,} and the $K$-module $W/W'$ is canonically isomorphic to and identified with a $K$-structure on $V/V'$, denoted by abuse of language by $W/V'$. Every $W$-rational subspace $V'\subseteq V$ has a $W$-rational complement.

If $V'$ is a finite-dimensional $\R$-vector space and $W'$ is a $K$-structure on $V'$, then a morphism $V\rightarrow V'$ is called \textit{$(W,W')$-rational} if it induces by restriction and coastriction a morphism $W\rightarrow W'$ of $K$-vector spaces. The set of $(W,K)$-rational linear forms on $V$ is a $K$-structure on $V^*$, canonically isomorphic to and identified with the dual $W^*$ of $W$. Applying this to $V^*$ we get the $K$-structure $W^{**}$ on $V^{**}$, and $c_V$ being $(W,W^{**})$-rational we identify $W$ and $W^{**}$.
\end{no}

\begin{no}\label{hilb}
A Hilbert norm on $V$ is called \textit{$W$-rational} if its corresponding $\R$-bilinear form on $V$ induces a $K$-bilinear form on $W$. Clearly, there exists a $W$-rational Hilbert norm on $V$. If $V'\subseteq V$ is a $W$-rational subspace, then a $W$-rational Hilbert norm $\norm$ on $V$ induces a canonical $W/V'$-rational Hilbert norm on $V/V'$ and a $(W/V',W)$-rational section of the canonical projection $V\twoheadrightarrow V/V'$ with image the orthogonal complement of $V'$ in $V$ with respect to $\norm$. (To see this, one should note that the orthogonal complement of $V'$ is $W$-rational since it equals $\bigcap_{y\in W\cap V'}\ke(f(\cdot,y))$, where $f$ denotes the $\R$-bilinear form corresponding to $\norm$.)
\end{no}

Now we fix further terminology and collect basic properties of polycones and fans. These are proven in some form in most of the textbooks on polyhedral geometry, and in precisely this form in \cite{diss}.\medskip

\begin{no}
A \textit{$W$-polycone (in $V$)} is the intersection of finitely many $W$-halfspaces in $V$, and it is called \textit{sharp} if it does not contain a line. For $A\subseteq V$ we denote by $\cone(A)$ the conic hull of $A$, that is, the set of $\R$-linear combinations of $A$ with coefficients in $\R_{\geq 0}$. A subset $\sigma\subseteq V$ is a $W$-polycone if and only if there is a finite subset $A\subseteq W$ with $\sigma=\cone(A)$.

Now, let $\sigma$ be a $W$-polycone. Then, $\langle\sigma\rangle=\sigma-\sigma$, and $s(\sigma)\dfgl\sigma\cap(-\sigma)$ is the greatest subspace of $V$ contained in $\sigma$; both these subspaces are $W$-rational. Furthermore, $\sigma$ is called \textit{full (in $V$)} if $\dim(\sigma)=n$. A \textit{(proper) face of $\sigma$} is a (proper) subset $\tau\subseteq\sigma$ such that there exists $u\in\sigma^{\vee}$ (or equivalently $u\in\sigma^{\vee}\cap W^*$) with $\tau=\sigma\cap u^{\perp}$. Faces of $\sigma$ are again $W$-polycones, and we denote by $\face(\sigma)$ and $\pface(\sigma)$ the finite sets of faces and of proper faces of $\sigma$. The relation ``$\tau$ is a face of $\sigma$'' is an ordering on the set of $W$-polycones and is denoted by $\tau\fleq\sigma$. If $k\in\Z$ and no confusion can arise then we set $\sigma_k\dfgl\{\tau\in\face(\sigma)\mid\dim(\tau)=k\}$. The $W$-polycone $\sigma$ is called \textit{simplicial} if there is a free subset $A\subseteq V$ with $\sigma=\cone(A)$, and then $\face(\sigma)=\{\cone(B)\mid B\subseteq A\}$. Simplicial $W$-polycones are sharp, and faces of simplicial $W$-polycones are again simplicial.

For $A\subseteq V$ we denote by $\conv(A)$ the convex hull of $A$. For $x,y\in V$ we set $\streckel x,y\strecker\dfgl\conv(\{x,y\})$, $\,\strecker x,y\strecker\dfgl\streckel x,y\strecker\setminus\{x\}$ and $\,\strecker x,y\streckel\,\dfgl\streckel x,y\strecker\setminus\{x,y\}$.
\end{no}

\begin{no}\label{topprop}	
a) We will make use of the following topological properties of polycones: A $W$-polycone $\sigma$ is full if and only if $\inn(\sigma)\neq\emptyset$, and then $\cl(\inn(\sigma))=\sigma$ and $\fr(\inn(\sigma))=\fr(\sigma)=\bigcup\pface(\sigma)$. Moreover, $\inn(\sigma)$ is convex, and if an open subset of $V$ meets a $W$-polycone $\sigma$ then it meets $\inn_{\langle\sigma\rangle}(\sigma)$.

b) We will make use of the following combinatorial properties of polycones: A $W$-polycone $\sigma$ is sharp if and only if $0\in\face(\sigma)$, and then $\sigma=\sum\sigma_1$. Moreover, if $\sigma$ is a full $W$-polycone, then every face of $\sigma$ is the intersection of a family in $\sigma_{n-1}$.
\end{no}

\begin{no}\label{sep}
Let $A,B\subseteq V$ be subsets. If there is an affine $W$-hyperplane $H$ in $V$ that separates $A$ and $B$ (strictly) then $A$ and $B$ are called \textit{(strictly) $W$-separable,} and if moreover $H$ \textit{separates them in their intersection,} that is, if $A\cap H=A\cap B=B\cap H$, then they are called \textit{$W$-separable in their intersection.}

Two $W$-polycones $\sigma$ and $\tau$ such that $\sigma\cap\tau$ is not full are $W$-separable in their intersection if and only if $\sigma\cap\tau\in\face(\sigma)\cap\face(\tau)$.
\end{no}

\begin{no}
A \textit{$W$-semifan (in $V$)} is a finite set $\sig$ of $W$-polycones such that $\sigma\cap\tau\in\face(\sigma)\subseteq\sig$ for all $\sigma,\tau\in\sig$, and a \textit{$W$-fan (in $V$)} is a $W$-semifan $\sig$ such that all its elements are sharp. A $W$-semifan is considered as an ordered set by means of the ordering induced by $\sigma\fleq\tau$.

Now, let $\sig$ be a $W$-semifan. We set $s(\sig)\dfgl\bigcap\sig$; if $\sig\neq\emptyset$ then this is the smallest element of $\sig$. The set of maximal elements of $\sig$ is denoted by $\sig_{\max}$. For $k\in\Z$ we set $\sig_k\dfgl\{\sigma\in\sig\mid\dim(\sigma)=k\}$, and we define $\sig_{\geq k}$ and $\sig_{<k}$ analogously. If $\sig_{\max}=\sig_n\neq\emptyset$ then $\sig$ is called \textit{equifulldimensional,} and if every $\sigma\in\sig$ is simplicial then $\sig$ is called \textit{simplicial.} For $\sigma\in\sig$ we set $\sig_{\sigma}\dfgl\{\tau\in\sig\mid\sigma\fleq\tau\}$. Furthermore, we set $\dd(\sig)\dfgl\sig_{\max}\setminus\sig_n$ and $\ff(\sig)\dfgl\{\sigma\in\sig_{n-1}\mid\exists!\tau\in\sig_n:\sigma\fleq\tau\}$, and moreover $\fff(\sig)\dfgl\bigcup_{\sigma\in\ff(\sig)}\face(\sigma)$. We also set $|\sig|\dfgl\bigcup\sig$, and we call $\sig$ \textit{full (in $V$)} if $\langle|\sig|\rangle=V$ and \textit{complete (in $V$)} if $|\sig|=V$.

If $\sig$ and $\sig'$ are $W$-semifans with $\sig\subseteq\sig'$, then $\sig$ is called a \textit{$W$-subsemifan of $\sig'$} and $\sig'$ is called a \textit{$W$-extension of $\sig$;} if in addition $\sig'$ is complete then it is called a \textit{$W$-completion of $\sig$.} In case $\sig$ and $\sig'$ are $W$-fans we speak of $W$-subfans.
\end{no}

\begin{no}\label{proj}
Let $\sig$ be a $W$-semifan, and let $\sigma\in\sig$. We set $V_{\sigma}\dfgl V/\langle\sigma\rangle$ and $W_{\sigma}\dfgl W/\langle\sigma\rangle$, and denote by $p_{\sigma}:V\twoheadrightarrow V_{\sigma}$ the canonical epimorphism. Let $\sig/\sigma\dfgl\{p_{\sigma}(\tau)\mid\tau\in\sig_{\sigma}\}$. This is a $W_{\sigma}$-fan, and for $\tau\in\sig_{\sigma}$ we write by abuse of language $\tau/\sigma\dfgl p_{\sigma}(\tau)$. The map $\hap_{\sigma}:\sig_{\sigma}\rightarrow\sig/\sigma,\;\tau\mapsto\tau/\sigma$ is an isomorphism of ordered sets inducing bijections $\sig_{\sigma}\cap\dd(\sig)\overset{\cong}\longrightarrow\dd(\sig/\sigma)$ and $\sig_{\sigma}\cap\ff(\sig)\overset{\cong}\longrightarrow\ff(\sig/\sigma)$. Moreover, for $k\in\Z$ it induces a bijection $\sig_{\sigma}\cap\sig_{k+\dim(\sigma)}\overset{\cong}\longrightarrow(\sig/\sigma)_k$. If $\sig$ is equifulldimensional or complete, then so is $\sig/\sigma$.

A $W$-semifan $\sig$ is complete if and only if $\sig/s(\sig)$ is complete. Moreover, if $\sig\neq\emptyset$ then there is a bijection between the set of $W$-completions of $\sig$ and the set of $W_{s(\sig)}$-completions of $\sig/s(\sig)$.
\end{no}

Next, we introduce a notion of direct sums of polycones. It may seem naive to define this to be a sum of polycones such that the sum of the vector spaces generated by these polycones is direct. However, this idea leads to a well-behaved notion of decomposition and turns out to be useful for different purposes in later sections. It should be noted that our notion of indecomposability differs from the ones in \cite{smi} (based on Minkowski sums) and \cite{wlo} (involving regularity conditions).\medskip

\begin{no}\label{dec10}
Let $(\sigma_i)_{i\in I}$ be a family of $W$-polycones. It is readily checked that the sum of $\R$-vector spaces $\sum_{i\in I}\langle\sigma_i\rangle$ is direct if and only if every $x\in\sum_{i\in I}\sigma_i$ can be written uniquely in the form $x=\sum_{i\in I}x_i$ with $x_i\in\sigma_i$ for every $i\in I$. Then, by abuse of language we say that \textit{the sum (of $W$-polycones) $\sum_{i\in I}\sigma_i$ is direct} and denote it by $\bigoplus_{i\in I}\sigma_i$. A $W$-polycone $\sigma$ is called \textit{$W$-decomposable} if it is the direct sum of two $W$-polycones different from $0$, and \textit{$W$-indecomposable} otherwise. If $\sigma$ is a $W$-polycone, then a \textit{$W$-decomposition of $\sigma$} is a set $Z$ of $W$-indecomposable $W$-polycones different from $0$ such that $\sigma=\bigoplus_{\tau\in Z}\tau$.
\end{no}

\begin{proposition}\label{dec20}
Let $(\sigma_i)_{i\in I}$ be a family of $W$-polycones such that the sum $\sigma\dfgl\sum_{i\in I}\sigma_i$ is direct.

{\rm a)} $\face(\sigma)=\{\bigoplus_{i\in I}\tau_i\mid\forall i\in I:\tau_i\fleq\sigma_i\}$.

{\rm b)} If $\tau_i\fleq\sigma_i$ for every $i\in I$ then $\tau_i=(\bigoplus_{j\in I}\tau_j)\cap\sigma_i$ for every $i\in I$.

{\rm c)} If $\tau\fleq\sigma$ then $\tau\cap\sigma_i\fleq\sigma$ for every $i\in I$ and $\tau=\bigoplus_{i\in I}\tau\cap\sigma_i$.
\end{proposition}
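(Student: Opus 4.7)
The plan is to deduce all three parts from two elementary facts: the uniqueness of the decomposition $x=\sum_i x_i$ with $x_i\in\sigma_i$ coming from directness of $\sum_i\langle\sigma_i\rangle$, and the observation that a finite sum of nonnegative reals vanishes if and only if every summand does. I would treat (b) first, then (c), and finally (a), whose inclusion ``$\subseteq$'' is a direct consequence of (c) while ``$\supseteq$'' is the substantive step.

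Part (b) is the easy consequence of uniqueness. Given $\tau_i\fleq\sigma_i$ for every $i$, the inclusion $\tau_i\subseteq(\bigoplus_j\tau_j)\cap\sigma_i$ is immediate. Conversely, any $x\in(\bigoplus_j\tau_j)\cap\sigma_i$ admits both a decomposition $x=\sum_j x_j$ with $x_j\in\tau_j\subseteq\sigma_j$ and the trivial decomposition with $x$ in slot $i$ and $0$ elsewhere; uniqueness inside $\bigoplus_j\sigma_j$ forces $x_j=0$ for $j\neq i$, so $x\in\tau_i$.

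For (c), write $\tau=\sigma\cap u^{\perp}$ for some $u\in\sigma^{\vee}\cap W^*$. Since $\sigma_i\subseteq\sigma$, one has $u\in\sigma_i^{\vee}$, so $\tau\cap\sigma_i=\sigma_i\cap u^{\perp}$ is already a face of $\sigma_i$. For any $x=\sum_i x_i\in\sigma$ the equality $u(x)=\sum_i u(x_i)$ expresses $u(x)$ as a sum of nonnegative reals, and hence $u(x)=0$ holds iff each $u(x_i)$ vanishes, iff $x_i\in\tau\cap\sigma_i$ for every $i$. This yields $\tau=\bigoplus_i(\tau\cap\sigma_i)$. Applying (c) to any $\tau\fleq\sigma$ gives at once the inclusion ``$\subseteq$'' in (a).

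The substantive step is the reverse inclusion in (a). Given $\tau_i\fleq\sigma_i$, choose $u_i\in\sigma_i^{\vee}\cap W^*$ with $\tau_i=\sigma_i\cap u_i^{\perp}$. The directness of $\sum_i\langle\sigma_i\rangle$ means that on the $W$-rational subspace $U\dfgl\bigoplus_i\langle\sigma_i\rangle$ the prescription $\sum_i x_i\mapsto\sum_i u_i(x_i)$ defines a linear form. Invoking \ref{compl} to pick a $W$-rational complement of $U$ in $V$, we extend this form by zero to some $u\in W^*$. Then $u$ is nonnegative on $\sigma$, so $u\in\sigma^{\vee}$, and by the same positivity argument used for (c) one obtains $\sigma\cap u^{\perp}=\bigoplus_i\tau_i$. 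The main point that requires care here is the rationality of $u$, and this is exactly what the $W$-rational complement guaranteed by \ref{compl} ensures.
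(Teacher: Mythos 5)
Your proof is correct and follows essentially the same route as the paper: assembling the forms $u_i$ into a single $u$ by restricting each to $\langle\sigma_i\rangle$, summing along the direct sum and extending to $V$ is exactly the paper's componentwise treatment of linear forms (its ``$V^*=\bigoplus_i V_i^*$'' step), the positivity argument (a finite sum of nonnegative reals vanishes only if every term does) is the same, and the rationality of $u$ you single out as the delicate point is in fact not needed, since a face only requires some $u\in\sigma^{\vee}$. The only point worth noting is that in c) you establish $\tau\cap\sigma_i\fleq\sigma_i$ rather than the printed $\tau\cap\sigma_i\fleq\sigma$; this is precisely what the paper's own proof of a) delivers and what is used later (e.g.\ in \ref{pack20}), whereas the printed version needs the remaining summands $\sigma_j$ to be sharp, so that $0\fleq\sigma_j$ and a) applies.
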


\begin{proof}
We can suppose that $\sigma$ is full. Let $V_i\dfgl\langle\sigma_i\rangle$ for $i\in I$ and let $\tau\subseteq V$. As $V^*=\bigoplus_{i\in I}V_i^*$ we have $\tau\fleq\sigma$ if and only if for every $i\in I$ there is a $u_i\in V_i^*$ with $\sigma\subseteq(\sum_{i\in I}u_i)^{\vee,V}$ and $\tau=\sigma\cap(\sum_{i\in I}u_i)^{\perp,V}$, and it is straightforward to check that these conditions are equivalent to $\sigma_i\subseteq u_i^{\vee,V}$ for every $i\in I$ and $\tau=\sum_{i\in I}\sigma_i\cap u_i^{\perp,V_i}$. This holds if and only if for every $i\in I$ there is a $\tau_i\fleq\sigma_i$ with $\tau=\sum_{i\in I}\tau_i$. This sum being obviously direct claim a) is proven. Now, b) and c) follow easily.
\end{proof}

\begin{no}\label{dec25}
Let $(\sigma_i)_{i\in I}$ be a family of $W$-polycones such that the sum $\sigma\dfgl\sum_{i\in I}\sigma_i$ is direct. Then, $\sigma$ is sharp if and only if $\sigma_i$ is sharp for every $i\in I$, and in case $\card(I)\neq 1$ this holds if and only if $\sigma_i\fleq\sigma$ for every $i\in I$ (\ref{dec20}, \ref{topprop}~b)).
\end{no}

\begin{lemma}\label{partition}
{\rm a)} Let $\sigma$ be a sharp $W$-polycone, let $Z$ be a $W$-decomposition of $\sigma$, let $u_{\rho}\in\rho\setminus 0$ for $\rho\in\sigma_1$, let $U\dfgl\{u_{\rho}\mid\rho\in\sigma_1\}$, and for $\tau\in Z$ let $U_{\tau}\dfgl\{u_{\rho}\mid\rho\in\tau_1\}$. Then, $(U_{\tau})_{\tau\in Z}$ is a partition of $U$ and the sum of $\R$-vector spaces $\sum_{\tau\in Z}\langle U_{\tau}\rangle$ is direct. Moreover, if $\tau\in Z$ then every partition $(C_l)_{l\in L}$ of $U_{\tau}$ such that the sum of $\R$-vector spaces $\sum_{l\in L}\langle C_l\rangle$ is direct, has cardinality $1$.

{\rm b)} Let $U\subseteq V\setminus 0$ be finite. There is at most one partition $(A_p)_{p\in P}$ of $U$ such that the sum of $\R$-vector spaces $\sum_{p\in P}\langle A_p\rangle$ is direct and with the property that if $p\in P$ then every partition $(C_l)_{l\in L}$ of $A_p$ such that the sum of $\R$-vector spaces $\sum_{l\in L}\langle C_l\rangle$ is direct, has cardinality $1$.
\end{lemma}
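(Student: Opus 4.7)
My plan for part a) is to exploit the face description of a direct sum given by Proposition \ref{dec20}a). Writing $\sigma=\bigoplus_{\tau\in Z}\tau$, every face of $\sigma$ has the form $\bigoplus_{\tau\in Z}\tau'_\tau$ with $\tau'_\tau\fleq\tau$, and because the sum is direct the dimensions add up. Applied to a ray $\rho\in\sigma_1$, this forces exactly one of the summands to be $1$-dimensional and the rest to be $0$, so by \ref{dec20}c) there is a unique $\tau^*\in Z$ with $\rho\fleq\tau^*$, while $\rho\cap\tau=0$ for $\tau\neq\tau^*$. Hence $(\tau_1)_{\tau\in Z}$ is a partition of $\sigma_1$, and therefore $(U_\tau)_{\tau\in Z}$ is a partition of $U$; the parts are nonempty because by \ref{dec25} each $\tau\in Z$ is sharp and nonzero, so by \ref{topprop}b) one has $\tau=\sum\tau_1$, forcing $\tau_1\neq\emptyset$. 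The directness of $\sum_{\tau\in Z}\langle U_\tau\rangle$ is then immediate from $\langle U_\tau\rangle\subseteq\langle\tau\rangle$ together with the directness of $\sum_{\tau\in Z}\langle\tau\rangle$.

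For the atomicity statement at the end of a), I would fix $\tau\in Z$ and a partition $(C_l)_{l\in L}$ of $U_\tau$ with $\sum_l\langle C_l\rangle$ direct, and set $\tau_l\dfgl\cone(C_l)$. Since every ray of $\tau$ is a $W$-rational ray of $\sigma$, each $\tau_l$ is a finite sum of $W$-rational rays, hence a $W$-polycone. Sharpness of $\tau$ combined with \ref{topprop}b) yields $\tau=\sum\tau_1=\cone(U_\tau)=\sum_{l\in L}\tau_l$; combining this with $\langle\tau_l\rangle=\langle C_l\rangle$ and the directness hypothesis gives $\tau=\bigoplus_{l\in L}\tau_l$. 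Each $\tau_l$ is nonzero (as $C_l$ is a nonempty set of nonzero vectors), so if $|L|\geq 2$ then grouping any one summand against the rest would exhibit $\tau$ as a direct sum of two nonzero $W$-polycones, contradicting $W$-indecomposability of $\tau$. Hence $|L|=1$.

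For part b), I would compare two candidate partitions $(A_p)_{p\in P}$ and $(B_q)_{q\in Q}$ by mutual refinement. Fixing $p\in P$ and keeping only those $q$ for which $A_p\cap B_q\neq\emptyset$, the intersections $A_p\cap B_q$ form a partition of $A_p$; since $\langle A_p\cap B_q\rangle\subseteq\langle B_q\rangle$ and $\sum_q\langle B_q\rangle$ is direct, this refinement has direct span sum. The atomicity hypothesis on $A_p$ therefore forces this refinement to have cardinality $1$, i.e.~$A_p\subseteq B_{q(p)}$ for a unique $q(p)\in Q$. Reversing the roles yields $B_q\subseteq A_{p(q)}$; the chain $A_p\subseteq B_{q(p)}\subseteq A_{p(q(p))}$ together with $A_p\neq\emptyset$ and the fact that the $(A_{p'})_{p'\in P}$ are pairwise disjoint forces $p=p(q(p))$, and hence $A_p=B_{q(p)}$. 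Thus the two partitions coincide as sets of subsets.

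The main point requiring care is in part a), where one repeatedly translates between sharpness of $\tau$, its $W$-indecomposability, and its description $\tau=\sum\tau_1$ via \ref{dec25} and \ref{topprop}b), and where one must keep track of which objects are genuine $W$-polycones (the reason we can sum the rays freely); part b) is then a purely formal cross-refinement argument once the atomicity property in a) is available.
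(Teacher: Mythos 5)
Your proposal is correct and follows essentially the same route as the paper's proof: identifying $(\tau_1)_{\tau\in Z}$ as a partition of $\sigma_1$ via \ref{dec20}, getting nonemptiness and the span statement from sharpness and \ref{topprop}~b), deriving atomicity by turning a partition $(C_l)_{l\in L}$ of $U_\tau$ into a direct-sum decomposition of $\tau$ by the polycones $\cone(C_l)$ and invoking $W$-indecomposability, and proving b) by the cross-refinement argument. The only cosmetic differences (using $\langle U_\tau\rangle\subseteq\langle\tau\rangle$ instead of equality, and grouping one summand against the rest rather than citing the decomposition directly) do not change the argument.
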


\begin{proof}
a) The definition of $U_{\tau}$ makes sense, and for $\rho\in\sigma_1$ there is a unique $\tau\in Z$ with $\rho\in\tau_1$ (\ref{dec20}). If $\tau\in Z$ then $\tau_1\neq\emptyset$ and $\langle U_{\tau}\rangle=\langle\tau\rangle$ (\ref{topprop}~b)), so that $(\tau_1)_{\tau\in Z}$ is a partition of $\sigma_1$. Hence, $(U_{\tau})_{\tau\in Z}$ is a partition of $U$ and the sum $\sum_{\tau\in Z}\langle U_{\tau}\rangle$ is direct. If $\tau\in Z$ and $(C_l)_{l\in L}$ is a partition of $U_{\tau}$ such that the sum $\sum_{l\in L}\langle C_l\rangle$ is direct, then $\{\cone(C_l)\mid l\in L\}$ is a $W$-decomposition of $\tau$. Thus, $W$-indecomposability of $\tau$ implies $\card(L)=1$, hence the claim.

b) Let $(A_p)_{p\in P}$ and $(B_q)_{q\in Q}$ be partitions of $U$ as in the claim, and let $q\in Q$. For $u\in B_q$ there is a unique $p_u\in P$ with $u\in A_{p_u}$, so that $\{B_q\cap A_{p_u}\mid u\in B_q\}$ is a partition of $B_q$ and the sum $\sum\{\langle B_q\cap A_{p_u}\rangle\mid u\in B_q\}$ is direct. Therefore, $\{B_q\cap A_{p_u}\mid u\in B_q\}$ has cardinality $1$, so there is a unique $p\in P$ with $B_q\subseteq A_p$. By reasons of symmetry we get $\{A_p\mid p\in P\}=\{B_q\mid q\in Q\}$, thus the claim.
\end{proof}

\begin{theorem}\label{dec50}
Every $W$-polycone has a $W$-decomposition, and a $W$-poly\-cone has a unique $W$-decomposition if and only if it is sharp or a line.
\end{theorem}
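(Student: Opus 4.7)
The plan is to prove existence by induction on $\dim(\sigma)$, and to prove uniqueness by a case analysis that uses \ref{partition} for the sharp case and explicit constructions of two distinct decompositions in the remaining cases.

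For existence, if $\sigma$ is $W$-indecomposable then $\{\sigma\}$ (or $\emptyset$ if $\sigma = 0$) is already a $W$-decomposition. Otherwise write $\sigma = \sigma_1 \oplus \sigma_2$ with both summands nonzero $W$-polycones. Since $\langle\sigma\rangle = \langle\sigma_1\rangle \oplus \langle\sigma_2\rangle$ is a direct sum of two nonzero subspaces, each $\dim(\sigma_i) < \dim(\sigma)$, so by induction each $\sigma_i$ admits a $W$-decomposition $Z_i$. The union $Z_1 \cup Z_2$ is disjoint (any common piece would be a nonzero polycone contained in $\langle\sigma_1\rangle \cap \langle\sigma_2\rangle = 0$) and is a $W$-decomposition of $\sigma$.

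For uniqueness in the sharp case the plan is to apply \ref{partition} directly. Pick $u_\rho \in \rho\setminus 0$ for every $\rho \in \sigma_1$ and set $U \dfgl \{u_\rho \mid \rho \in \sigma_1\}$. Any $W$-decomposition $Z$ of $\sigma$ induces, via \ref{partition}~a), a partition $(U_\tau)_{\tau \in Z}$ of $U$ satisfying the hypothesis of \ref{partition}~b), which fixes this partition intrinsically. Since each $\tau \in Z$ is sharp, $\tau = \sum\tau_1 = \cone(U_\tau)$ is determined by $U_\tau$, so $Z$ is unique. If $\sigma$ is a line then any splitting $\sigma = \sigma_1 \oplus \sigma_2$ with both $\sigma_i \neq 0$ contradicts $\dim\langle\sigma\rangle = 1$; hence $\sigma$ is $W$-indecomposable and $\{\sigma\}$ is its unique decomposition.

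For non-uniqueness when $\sigma$ is neither sharp nor a line, I distinguish two subcases. If $s(\sigma) = \langle\sigma\rangle$ then $\sigma$ is a $W$-rational subspace of some dimension $d \geq 2$; fixing a $W$-rational basis $b_1, \dots, b_d$, the two sets $\{\R b_1, \dots, \R b_d\}$ and $\{\R(b_1 + b_2), \R b_2, \dots, \R b_d\}$ are distinct $W$-decompositions. Otherwise $0 \neq s(\sigma) \neq \langle\sigma\rangle$; since $K$ is infinite (as $\Q \subseteq K$) one can choose two distinct $W$-rational complements $V', V''$ of $s(\sigma)$ in $\langle\sigma\rangle$. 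Exploiting closure of $\sigma$ under addition together with $-s(\sigma) = s(\sigma)$, one shows $\sigma = (\sigma \cap V') \oplus s(\sigma)$ with $\sigma \cap V'$ a sharp $W$-polycone and $\langle\sigma \cap V'\rangle = V'$, and analogously for $V''$. Refining both decompositions further (the sharp part uniquely by the already-handled sharp case, the lineality part $s(\sigma)$ into $W$-rational lines) produces two $W$-decompositions whose sharp pieces span $V'$ and $V''$ respectively and hence cannot coincide.

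The main obstacle will be this last subcase: one must verify the identity $\sigma = (\sigma \cap V') \oplus s(\sigma)$ with $\langle\sigma\cap V'\rangle = V'$ and then observe that in any $W$-decomposition of $\sigma$ the collection of sharp pieces automatically spans a complement of $s(\sigma)$ in $\langle\sigma\rangle$, so that distinct complements really yield distinct decompositions. Every other step is essentially bookkeeping once \ref{partition} and \ref{dec20} are in hand.
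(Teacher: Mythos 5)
Your proposal is correct and follows essentially the same route as the paper: existence by induction on $\dim(\sigma)$, uniqueness for sharp polycones via Lemma \ref{partition}, and non-uniqueness otherwise by splitting off the lineality space $s(\sigma)$ along non-canonically chosen rational data. Your two-subcase analysis (varying the basis of $s(\sigma)$, respectively the rational complement $V'$) is just a more explicit version of the paper's one-line argument using the decomposition $\sigma=\bigoplus_{x\in X}\langle x\rangle\oplus q(p(\sigma))$, where $\sigma\cap V'=q(p(\sigma))$ for $V'$ the image of the section $q$.
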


\begin{proof}
Let $\sigma$ be a $W$-polycone. Existence of a $W$-decomposition of $\sigma$ follows by induction on $\dim(\sigma)$, since $\dim(\sigma)\leq 1$ obviously implies that $\sigma$ is $W$-in\-de\-com\-po\-sable. If $p:V\twoheadrightarrow V/s(\sigma)$ denotes the canonical epimorphism then we can choose a $(W/s(\sigma),W)$-rational section $q$ of $p$ (\ref{compl}) and a basis $X\subseteq W$ of $s(\sigma)$. Then, $\sigma=\sum_{x\in X}\langle x\rangle+q(p(\sigma))$, and this sum is direct. Therefore, uniqueness of a $W$-decomposition of $\sigma$ implies that it is either sharp or a line. Conversely, if $\sigma$ is a line then it obviously has a unique $W$-decomposition. So, suppose that $\sigma$ is sharp, let $Z$ and $Z'$ be $W$-decompositions of $\sigma$, let $u_{\rho}\in\rho\setminus 0$ for $\rho\in\sigma_1$, and let $U\dfgl\{u_{\rho}\mid \rho\in\sigma_1\}$. Then, the partitions $(\{u_{\rho}\mid\rho\in\tau_1\})_{\tau\in Z}$ and $(\{u_{\rho}\mid\rho\in\tau_1\})_{\tau\in Z'}$ of $U$ coincide (\ref{partition}), and this implies $Z=Z'$.
\end{proof}

\begin{no}
It follows from \ref{dec50} that $W$-indecomposability and $V$-indecomposa\-bi\-li\-ty of $W$-polycones are equivalent and that $W$-decompositions and $V$-decompositions of a sharp $W$-polycone are the same. Note that the second statement need not hold for nonsharp $W$-polycones.
\end{no}

\begin{no}\label{dec70}
A $W$-polycone $\sigma$ is simplicial if and only if it is sharp and $\sigma_1$ is a $W$-decomposition of $\sigma$. Hence, a simplicial $W$-polycone $\sigma$ is $W$-indecomposable if and only if $\dim(\sigma)\leq 1$. Moreover, if $(\sigma_i)_{i\in I}$ is a family of $W$-polycones such that the sum $\sigma\dfgl\sum_{i\in I}\sigma_i$ is direct, then $\sigma$ is simplicial if and only if $\sigma_i$ is simplicial for every $i\in I$.
\end{no}

To illustrate the above we sketch how it can be used to prove the well-known fact that every fan has a simplicial strict subdivision.\medskip

\begin{no}\label{sub10}
Let $\sig$ be a $W$-fan. A \textit{(strict) $W$-subdivision of $\sig$} is a $W$-fan $\sig'$ with $|\sig|\subseteq|\sig'|$ such that for every $\sigma\in\sig'$ there is a $\tau\in\sig$ with $\sigma\subseteq\tau$ (and that in addition $\sig'_1\subseteq\sig_1$). If $\Tau\subseteq\sig$ is a $W$-subfan and $\sig'$ is a (strict) $W$-subdivision of $\sig$, then $\{\sigma\in\sig'\mid\exists\tau\in\Tau:\sigma\subseteq\tau\}$ is a (strict) $W$-subdivision of $\Tau$, called \textit{the $W$-subdivision of $\Tau$ induced by $\sig'$.}

For $\rho\in\sig_1$ we set $\sig[\rho]\dfgl\{\rho+\tau\mid\exists\sigma\in\sig_{\rho}:\rho\not\fleq\tau\fleq\sigma\}=\{\rho\oplus\tau\mid\exists\sigma\in\sig_{\rho}:\rho\not\fleq\tau\fleq\sigma\}$. On use of \ref{dec20} it can be shown that $\sig(\rho)\dfgl(\sig\setminus\sig_{\rho})\cup\sig[\rho]$ is a strict $W$-subdivision of $\sig$. Now, we choose a counting $(\rho^{(i)})_{i=1}^r$ of $\sig_1$ and set $\sig^{(0)}\dfgl\sig$ and $\sig^{(i)}\dfgl\sig^{(i-1)}(\rho^{(i)})$ for $i\in[1,r]$. Then, $\sig^{(r)}$ is a strict $W$-subdivision of $\sig$ by the above. If it is nonsimplicial then there is a $W$-indecomposable $\sigma\in\sig^{(r)}$ with $\dim(\sigma)>1$ (\ref{dec50}, \ref{dec70}). Therefore, $m\dfgl\max\{i\in[1,r]\mid\rho^{(i)}\fleq\sigma\}$ exists and $\sigma\in\sig^{(m-1)}[\rho^{(m)}]$, hence $\sigma=\rho^{(m)}\oplus\tau$ for a $\tau\in\sig^{(m-1)}$. Indecomposability of $\sigma$ implies $\tau=0$, hence the contradiction $\sigma=\rho^{(m)}\in\sig_1$. Thus, $\sig^{(r)}$ is simplicial.
\end{no}


\section{The frontier of a fan}\label{sec3}

\textit{Throughout this section let $\sig$ be a $W$-semifan.}\medskip

The goal of this section is a combinatorial description of the topological frontier of the support of $\sig$ (\ref{top80}). A key step for this is a characterisation of completeness of $\sig$ in terms of its projections (\ref{top50}).\medskip

\begin{no}\label{top10}
For $x\in|\sig|$ we denote by $\omega_{x,\sig}\dfgl\bigcap\{\sigma\in\sig\mid x\in\sigma\}$ (or just by $\omega_x$) the smallest cone in $\sig$ containing $x$. For $\sigma\in\sig$ we have $x\in\inn_{\langle\sigma\rangle}(\sigma)$ if and only if $\sigma=\omega_x$ (\ref{topprop}~a)).
\end{no}

\begin{proposition}\label{top20}
If $x\in\fr(|\sig|)$ then $\omega_x\subseteq\fr(|\sig|)$.
\end{proposition}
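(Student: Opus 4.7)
Set $\sigma\dfgl\omega_x$; by \ref{top10} this means $x\in\inn_{\langle\sigma\rangle}(\sigma)$. My first move is a reduction. The set $|\sig|$ is closed as a finite union of closed polycones, so $\fr(|\sig|)$ is closed in $V$. Applying \ref{topprop}~a) inside $\langle\sigma\rangle$ (and using that the subspace $\langle\sigma\rangle$ is closed in $V$) shows that $\sigma=\cl_V(\inn_{\langle\sigma\rangle}(\sigma))$. Hence it suffices to prove the inclusion $\inn_{\langle\sigma\rangle}(\sigma)\subseteq\fr(|\sig|)$: taking closures then yields $\omega_x=\sigma\subseteq\fr(|\sig|)$.

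For the main step I would fix $y\in\inn_{\langle\sigma\rangle}(\sigma)$, note that $\omega_y=\sigma$ again by \ref{top10}, and aim to show $y\notin\inn(|\sig|)$; as $y\in\sigma\subseteq|\sig|$, this places $y$ in $\fr(|\sig|)$. The strategy is to pin down the local shape of $|\sig|$ near \emph{every} point of $\inn_{\langle\sigma\rangle}(\sigma)$ uniformly. Setting $T\dfgl\bigcup_{\tau\in\sig_\sigma}(\tau+\langle\sigma\rangle)$, I would show that for every $u\in\inn_{\langle\sigma\rangle}(\sigma)$ there is a neighborhood $U\in\V(u)$ with $U\cap|\sig|=U\cap(u+T)$. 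This splits into two parts. First, any $\tau'\in\sig$ containing $u$ must, as $\omega_u=\sigma$, satisfy $\sigma\fleq\tau'$, so $\tau'\in\sig_\sigma$; the finite closed set $\bigcup(\sig\setminus\sig_\sigma)$ hence misses some $V$-neighborhood of $u$, leaving $U\cap|\sig|=U\cap\bigcup_{\tau\in\sig_\sigma}\tau$. Second, for each $\tau\in\sig_\sigma$ I would write $\tau$ as an intersection of $W$-halfspaces, identify those halfspaces active at $u$ as precisely the ones whose defining linear form vanishes on all of $\langle\sigma\rangle$ (using $u\in\inn_{\langle\sigma\rangle}(\sigma)$ together with $\sigma\fleq\tau$), observe that these active halfspaces alone cut out $\tau+\langle\sigma\rangle$, and note that the remaining inequalities stay strict throughout a small enough $V$-neighborhood of $u$.

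Granting this, since $T$ is a cone, the translate $u+T$ contains a $V$-neighborhood of $u$ exactly when $T=V$; hence for $u\in\inn_{\langle\sigma\rangle}(\sigma)$ the condition $u\in\inn(|\sig|)$ is equivalent to $T=V$ and therefore independent of $u$. Applied to $u=x$, the hypothesis $x\in\fr(|\sig|)$ rules out $T=V$; applied to $u=y$, this forces $y\notin\inn(|\sig|)$, which finishes the argument. The hardest part is the local polyhedral identification of $\tau$ with the translate of its tangent cone $\tau+\langle\sigma\rangle$ at a relative interior point of $\sigma$: this is a standard fact in spirit, but its justification needs careful bookkeeping with the defining linear forms of $\tau$, grouped according to whether they vanish on $\langle\sigma\rangle$.
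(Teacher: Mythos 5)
Your proof is correct, but its core is genuinely different from the paper's. Both arguments start with the same reduction via \ref{topprop}~a) and \ref{top10} (it suffices to put $\inn_{\langle\omega_x\rangle}(\omega_x)$ into $\fr(|\sig|)$ and then take closures), but from there the paper argues softly with convexity: for $y\in\inn_{\langle\omega_x\rangle}(\omega_x)$ and a convex $U\in\V(y)$ meeting only cones that contain $y$, it picks $z$ on the line through $x$ and $y$ with $x\in\,\strecker z,y\strecker$, forms $C\dfgl\conv(\{z\}\cup U)$ so that $x\in\inn(C)$, uses $x\in\fr(|\sig|)$ to produce $p\in C\setminus|\sig|$ on a segment $\streckel z,q\strecker$ with $q\in U$, and shows $q\notin|\sig|$ because any cone containing $q$ would contain $\omega_y=\omega_x$, hence $z$, hence the whole segment. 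You instead prove a local structure statement: near any $u\in\inn_{\langle\sigma\rangle}(\sigma)$ (with $\sigma=\omega_x$) the support $|\sig|$ coincides with the translated star cone $u+T$, $T=\bigcup_{\tau\in\sig_{\sigma}}(\tau+\langle\sigma\rangle)$, obtained by discarding the cones not containing $u$ and replacing each $\tau\in\sig_{\sigma}$ locally by the cone cut out by its constraints active at $u$ (exactly those vanishing on $\langle\sigma\rangle$); since $T$ depends only on $\sigma$ and a translated cone is a neighbourhood of its apex iff it is all of $V$, membership of points of $\inn_{\langle\sigma\rangle}(\sigma)$ in $\inn(|\sig|)$ is independent of the point, and the hypothesis on $x$ settles it. Your route costs the active/inactive-constraint bookkeeping (which you correctly flag and which does go through: nonnegativity of a linear form on $\sigma$ plus vanishing at a relative interior point forces vanishing on $\langle\sigma\rangle$, and the active constraints cut out precisely $\tau+\langle\sigma\rangle$), but it buys more: the local description of $|\sig|$ along $\inn_{\langle\sigma\rangle}(\sigma)$ is a reusable fact, of the kind that could also feed into \ref{top40} or \ref{top80}. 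The paper's argument is shorter and stays within bare convexity and the face property of semifans, in line with its stated aim of avoiding extra polyhedral machinery.
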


\begin{proof}
As $\omega_x=\cl_{\langle\omega_x\rangle}(\inn_{\langle\omega_x\rangle}(\omega_x))$ (\ref{topprop}~a)) it suffices to show $\inn_{\langle\omega_x\rangle}(\omega_x)\subseteq\fr(|\sig|)$. Let $y\in\inn_{\langle\omega_x\rangle}(\omega_x)\setminus\{x\}$ and let $U\in\V(y)$. We have to show $U\not\subseteq|\sig|$. As $\bigcup\{\sigma\in\sig\mid y\notin\sigma\}$ is closed and does not contain $y$ we can suppose every cone in $\sig$ met by $U$ to contain $y$, and as $V$ is locally convex we can suppose $U$ to be convex. Let $L$ be the affine line through $x$ and $y$. There is a $z\in L\cap\omega_x$ with $x\in\,\strecker z,y\strecker$. Convexity of $U$ yields $C\dfgl\conv(\{z\}\cup U)=\bigcup_{q\in U}\streckel z,q\strecker$, and it is readily checked that $x\in\inn(C)$. As $x\in\fr(|\sig|)$ there is a $p\in C\setminus|\sig|$, hence a $q\in U$ with $p\in\streckel z,q\strecker$. If $q\in|\sig|$ then there is a $\sigma\in\sig$ with $q\in\sigma$, implying $y\in\sigma\cap\inn_{\langle\omega_x\rangle}(\omega_x)$, hence $z\in\omega_x=\omega_y\fleq\sigma$ (\ref{top10}) and thus the contradiction $p\in\streckel z,q\strecker\subseteq\sigma\subseteq|\sig|$. This proves the claim.
\end{proof}

\begin{lemma}\label{top30}
Let $x\in\fr(|\sig|)$, let $U\in\V(x)$ be convex, and let $A\subseteq V$ be closed and nowhere dense in $V$ with $x\in A$ and containing for every $y\in A\setminus\{x\}$ the affine line through $x$ and $y$. There exists a nonempty open subset $U'\subseteq U$ such that for every $y\in U'$ we have $\,\strecker x,y\strecker\cap(|\sig|\cup A)=\emptyset$.
\end{lemma}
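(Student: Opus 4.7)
The plan is to reduce the segment-avoidance condition to a condition on the direction $y-x$, then to build $U'$ from an open cone of good directions at $x$. First, shrink $U$ without loss of generality: since each $\sigma\in\sig$ with $x\notin\sigma$ is closed and misses $x$, one may replace $U$ by a smaller open convex neighborhood of $x$ disjoint from all such cones, achieving $U\cap|\sig|\subseteq\bigcup\{\sigma\in\sig\mid x\in\sigma\}$.

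For each $\sigma\in\sig$ with $x\in\sigma$, introduce the tangent cone $T_x(\sigma)\dfgl\R_{\geq 0}(\sigma-x)$, and set $\tilde A\dfgl A-x$; the hypothesis that $A$ contains every affine line through $x$ and a point of $A\setminus\{x\}$ makes $\tilde A$ invariant under scalar multiplication by $\R$. A short convexity argument using $x\in\sigma$ yields, for $y\neq x$, the two equivalences $\,\strecker x,y\strecker\cap\sigma=\emptyset\Leftrightarrow y-x\notin T_x(\sigma)$ and $\,\strecker x,y\strecker\cap A=\emptyset\Leftrightarrow y-x\notin\tilde A$. Hence the task reduces to finding a nonempty open subset of $U$ whose translates $y-x$ avoid the closed set $D\dfgl\bigcup_{\sigma\in\sig,\,x\in\sigma}T_x(\sigma)\cup\tilde A$.

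The essential step is to see $D\neq V$. From a halfspace representation $\sigma=\{y\mid\langle u_j,y\rangle\leq b_j\}$, keeping only the constraints active at $x$ gives $T_x(\sigma)$, and choosing $\delta$ small enough to keep the inactive constraints inactive yields a ball $B(x,\delta)$ on which $\sigma\cap B(x,\delta)=(x+T_x(\sigma))\cap B(x,\delta)$ simultaneously for every $\sigma$ containing $x$ (finitely many). If $\bigcup T_x(\sigma)$ were all of $V$, this would force $B(x,\delta)\subseteq|\sig|$, contradicting $x\in\fr(|\sig|)$. So $V\setminus\bigcup T_x(\sigma)$ is open and nonempty; since $\tilde A$ is closed and nowhere dense, $V\setminus D$ remains open and nonempty.

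Finally, set $U'\dfgl U\cap(x+(V\setminus D))$. As $D$ is stable under multiplication by positive scalars (each $T_x(\sigma)$ is, and $\tilde A$ even more so), $V\setminus D$ contains vectors of arbitrarily small norm, so $U'$ is a nonempty open subset of $U$. For $y\in U'$, convexity of $U$ gives $\streckel x,y\strecker\subseteq U$, so the segment avoids cones not containing $x$, and the direction criteria above yield $\,\strecker x,y\strecker\cap(|\sig|\cup A)=\emptyset$. The main obstacle is showing $\bigcup T_x(\sigma)\neq V$: this rests on the local coincidence of a polycone with its tangent cone at $x$, a standard but implicit fact that must be derived from the halfspace presentation.
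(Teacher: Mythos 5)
Your proof is correct, but it takes a genuinely different route from the paper's. The paper never works with directions at $x$: after the same first reduction (every cone met by $U$ contains $x$), it enlarges $A$ so as to absorb every cone of dimension $<n$ containing $x$ (which keeps $A$ closed, nowhere dense and stable under affine lines through $x$), then simply picks a nonempty open $U'\subseteq U$ disjoint from $|\sig|\cup A$ --- such a set exists at once because $x\in\fr(|\sig|)$, $|\sig|\cup A$ is closed and $A$ is nowhere dense --- and verifies the segment condition by contradiction: if $\,\strecker x,y\strecker$ met $|\sig|$ but not $A$ it would meet a full cone, hence its frontier, hence a proper face $\tau\in\pface(\sigma)$, and $\tau$ either contains $x$ (then $\tau\subseteq A$, contradiction) or does not (then $\tau\cap U=\emptyset$, contradicting $\,\strecker x,y\strecker\subseteq U$). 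You instead linearize at $x$: the equivalences $\,\strecker x,y\strecker\cap\sigma=\emptyset\Leftrightarrow y-x\notin T_x(\sigma)$ and $\,\strecker x,y\strecker\cap A=\emptyset\Leftrightarrow y-x\notin\tilde A$ reduce everything to $y-x\notin D$, and the existence of good directions comes from the local identity $\sigma\cap B(x,\delta)=(x+T_x(\sigma))\cap B(x,\delta)$ combined with $x\in\fr(|\sig|)$. Your approach buys an explicit description of the set of good endpoints as the trace on $U$ of the open complement of a finite union of polyhedral cones through $x$, and the final verification becomes a one-line computation instead of a face-crossing argument; the price is the auxiliary polyhedral input (closedness of $T_x(\sigma)$ via the active-constraint description and the local coincidence of $\sigma$ with $x+T_x(\sigma)$), which the paper avoids by relying only on \ref{topprop}~a). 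Two small repairs: since $U\in\V(x)$ need not be open, set $U'\dfgl\inn(U)\cap(x+(V\setminus D))$ (or carry out your first shrinking step inside $\inn(U)$, which you essentially do); and note that only the implication $y-x\notin T_x(\sigma)\Rightarrow\,\strecker x,y\strecker\cap\sigma=\emptyset$ is actually needed, which holds directly from $T_x(\sigma)=\R_{\geq 0}(\sigma-x)$. Neither affects the substance of the argument.
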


\begin{proof}
First we can suppose that every cone in $\sig$ met by $U$ contains $x$, and then we can suppose that every $\sigma\in\sig_{<n}$ containing $x$ is contained in $A$ (\ref{topprop}~a)). As $x\in\fr(|\sig|)$ and $U\in\V(x)$, closedness of $|\sig|$ implies that $U\setminus|\sig|$ contains a nonempty, open set $U''$. As $A$ is nowhere dense in $V$ (\ref{topprop}~a)) and $|\sig|\cup A$ is closed, $U''\setminus(|\sig|\cup A)$ contains a nonempty, open set $U'$. Let $y\in U'$, and assume that $\,\strecker x,y\strecker$ meets $|\sig|\cup A$. If $\,\strecker x,y\strecker$ meets $A$, then $y$ lies on the affine line through $x$ and a point in $A$, thus yielding the contradiction $y\in A$. Thus, $\,\strecker x,y\strecker$ does not meet $A$, and so it meets a $\sigma\in\sig_n$, hence $\fr(\sigma)$, and therefore a $\tau\in\pface(\sigma)$ (\ref{topprop}~a)). It follows $x\notin\tau$, hence $\tau\cap U=\emptyset$, contradictory to $\,\strecker x,y\strecker\subseteq U$, and herewith the claim is proven.
\end{proof}

\begin{proposition}\label{top40}
If $\tau\fleq\sigma\in\sig$ and $\sigma\subseteq\fr(|\sig|)$ then $\sigma/\tau\subseteq\fr(|\sig/\tau|)$.
\end{proposition}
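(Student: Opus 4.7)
My plan is to reduce, via \ref{top20} applied to the fan $\sig/\tau$, to showing that a single suitable point of $\sigma/\tau$ lies in $\fr(|\sig/\tau|)$, and then to prove this by a local comparison between $|\sig|$ and $p_\tau^{-1}(|\sig/\tau|)$ near a lift of that point.

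First pick $\bar{x} \in \inn_{\langle\sigma/\tau\rangle}(\sigma/\tau)$ and lift it to $x \in \inn_{\langle\sigma\rangle}(\sigma)$ with $p_\tau(x) = \bar{x}$; this is possible because $p_\tau|_{\langle\sigma\rangle}\colon \langle\sigma\rangle \twoheadrightarrow \langle\sigma/\tau\rangle$ is an open surjective linear map. By hypothesis $x \in \sigma \subseteq \fr(|\sig|)$, and \ref{top10} gives $\omega_x = \sigma$ and $\omega_{\bar{x},\sig/\tau} = \sigma/\tau$. Once I know $\bar{x} \in \fr(|\sig/\tau|)$, \ref{top20} applied inside $\sig/\tau$ yields $\sigma/\tau = \omega_{\bar{x},\sig/\tau} \subseteq \fr(|\sig/\tau|)$, as wanted.

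I would then establish the following \emph{local comparison}: there exists a convex $U \in \V(x)$ with
\[
U \cap \bigl(|\sig_\tau| + \langle\tau\rangle\bigr) \;=\; U \cap |\sig|.
\]
This suffices: for any convex $\bar{U} \in \V(\bar{x})$, shrink $U$ to have $U \subseteq p_\tau^{-1}(\bar{U})$; as $x \in \fr(|\sig|)$, pick $y \in U \setminus |\sig|$, and the comparison then gives $y \notin |\sig_\tau| + \langle\tau\rangle = p_\tau^{-1}(|\sig/\tau|)$, whence $p_\tau(y) \in \bar{U} \setminus |\sig/\tau|$, proving $\bar{x} \in \fr(|\sig/\tau|)$.

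To build such a $U$ I combine two ingredients. First, using the isomorphism $\hap_\tau$ (\ref{proj}) together with $\omega_{\bar{x},\sig/\tau} = \sigma/\tau$, every $\rho \in \sig_\tau$ with $\sigma \not\fleq \rho$ satisfies $\bar{x} \notin \rho/\tau$, hence $x \notin \rho + \langle\tau\rangle = p_\tau^{-1}(\rho/\tau)$; since these finitely many sets are closed, a sufficiently small $U$ avoids each, and shrinking further (\ref{topprop}~a)) arranges that every cone of $\sig$ met by $U$ contains $x$ and thus lies in $\sig_\sigma$. Hence $U \cap |\sig| = U \cap |\sig_\sigma|$ and $U \cap (|\sig_\tau| + \langle\tau\rangle) = U \cap (|\sig_\sigma| + \langle\tau\rangle)$. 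Second, for each $\rho \in \sig_\sigma$ those defining $W$-halfspaces $\{u_j \geq 0\}$ of $\rho$ with $u_j|_\sigma \not\equiv 0$ have $u_j(x) > 0$ (else $x$ would lie in the proper face $\sigma \cap u_j^\perp$, contradicting \ref{topprop}~a)); they are thus satisfied in a whole neighbourhood of $x$, so only the halfspaces with $u_j|_\sigma = 0$ remain active near $x$. Since these same halfspaces cut out $\rho + \langle\sigma\rangle \supseteq \rho + \langle\tau\rangle \supseteq \rho$, a further shrinking of $U$ gives $U \cap \rho = U \cap (\rho + \langle\tau\rangle)$ for every $\rho \in \sig_\sigma$ (finite intersection), and hence $U \cap (|\sig_\sigma| + \langle\tau\rangle) = U \cap |\sig_\sigma|$. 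Combining the two ingredients yields the comparison.

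The main obstacle is the second ingredient, i.e.\ the polyhedral identification showing that locally near $x \in \inn_{\langle\sigma\rangle}(\sigma)$ the cone $\rho$ coincides with its $\langle\tau\rangle$-thickening. This rests on precisely isolating the defining functionals of $\rho$ active at $x$; the remainder of the proof is straightforward bookkeeping with the open map $p_\tau$ and the face isomorphism $\hap_\tau$.
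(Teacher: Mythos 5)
Your argument is correct, and it takes a genuinely different (and shorter) route than the paper. The paper first reduces to fans, inducts on $\dim(\tau)$, and in the crucial case $\dim(\tau)=1$ produces, via \ref{top30} and a hyperplane separating $\sigma$ from each maximal cone, a point $y'$ whose whole translate $\langle\tau\rangle+y'$ misses $|\sig|$; this is a global construction along the segment $\strecker x,y\strecker$. You instead prove a purely local statement: near a point $x\in\inn_{\langle\sigma\rangle}(\sigma)$ the support $|\sig|$ coincides with $p_\tau^{-1}(|\sig/\tau|)=|\sig_\tau|+\langle\tau\rangle$, because (i) only cones of $\sig_\sigma$ matter near $x$ (both for $|\sig|$ and, via $\omega_{\bar x,\sig/\tau}=\sigma/\tau$ and $\hap_\tau$, for the thickened support), and (ii) for $\rho\in\sig_\sigma$ the defining inequalities of $\rho$ active at $x$ are exactly those vanishing on $\sigma$, hence on $\langle\sigma\rangle\supseteq\langle\tau\rangle$, so $\rho$ and $\rho+\langle\tau\rangle$ agree near $x$. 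This makes the transfer of an exterior point immediate, needs no induction, no reduction to fans, and no separation argument; it also exposes the underlying local product structure of a fan along the relative interior of one of its cones, which is conceptually clarifying. Two small presentational points: the existence of the lift $x$ is most cleanly arranged by choosing $x\in\inn_{\langle\sigma\rangle}(\sigma)$ first and setting $\bar x\dfgl p_\tau(x)$ (openness of $p_\tau|_{\langle\sigma\rangle}$ then shows $\bar x\in\inn_{\langle\sigma/\tau\rangle}(\sigma/\tau)$, which is all you use), and the shrinking of $U$ so that every cone met by $U$ contains $x$ follows simply from closedness of $\bigcup\{\rho\in\sig\mid x\notin\rho\}$ rather than from \ref{topprop}~a).
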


\begin{proof}
As continuity of $p_{s(\sig)}$ implies $p_{s(\sig)}(\fr(|\sig|))\subseteq\fr(|\sig/s(\sig)|)$ we can suppose $\sig$ to be a $W$-fan. We prove the claim by induction on $d\dfgl\dim(\tau)$. If $d=0$ it is clear. So, let $d=1$ and hence $n>1$ (\ref{topprop}~a)). By replacing $\sig$ with $\bigcup_{\omega\in\sig_{\tau}}\face(\omega)$ we can suppose that $\sig_{\max}\subseteq\sig_{\tau}$. By continuity of $p_{\tau}$ and \ref{topprop}~a) it suffices to show $p_{\tau}(\inn_{\langle\sigma\rangle}(\sigma))\subseteq\fr(|\sig/\tau|)$. So, let $x\in\inn_{\langle\sigma\rangle}(\sigma)$, hence $\sigma=\omega_x\subseteq\fr(|\sig|)$ (\ref{top10}) and therefore $\dim(\sigma)<n$ (\ref{topprop}~a)). Let $U\in\V_{V_{\tau}}(p_{\tau}(x))$. We have to show that $U$ meets $V_{\tau}\setminus|\sig/\tau|$. As $V$ is locally convex we can suppose $U$ to be convex, hence $p_{\tau}^{-1}(U)\in\V_V(x)$ is convex, too. Therefore, by \ref{top30} there is a $y\in p_{\tau}^{-1}(U)\setminus\{x\}$ with $\,\strecker x,y\strecker\subseteq p_{\tau}^{-1}(U)\setminus(|\sig|\cup\langle\sigma\rangle)$.

Let $\omega\in\sig_{\max}$. There is a hyperplane $H\subseteq V$ separating $\sigma$ and $\omega$ in their intersection (\ref{sep}), and we denote the halfspaces defined by $H$ and containing $\sigma$ and $\omega$ by $H_{\sigma}$ and $H_{\omega}$, respectively. We will show that there is a $y_{\omega}\in\,\strecker x,y\strecker$ with $\omega\cap(\tau+\cone(x,y_{\omega}))\subseteq\sigma$. Indeed, if $\omega\cap(\tau+\cone(x,y))\subseteq\sigma$ then $y_{\omega}\dfgl y$ fulfils the claim. So, suppose $\omega\cap(\tau+\cone(x,y))\not\subseteq\sigma$. First, we assume that $x\in\omega$ and hence $\sigma=\omega_x\fleq\omega$. Let $z\in\tau\setminus 0$ and let $w\in\omega\cap(\tau+\cone(x,y))\setminus\sigma$. As $x\in\inn_{\langle\sigma\rangle}(\sigma)$ there is an $r\in\R_{>0}$ with $x+r(x-z)\in\inn_{\langle\sigma\rangle}(\sigma)\subseteq\omega$. As $w\in\tau+\cone(x,y)=\cone(z,x,y)$ there is an $r\in\R_{\geq 0}$ with $sw\in\omega\cap\conv(z,x,y)$. Now it is straightforward to derive the contradiction $$\emptyset\neq\,\strecker x+(r-z),sw\streckel\,\cap\,\strecker x,y\streckel\,\subseteq\omega\cap\,\strecker x,y\strecker\subseteq|\sig|\cap\,\strecker x,y\strecker=\emptyset.$$ Thus, $x\in\sigma\setminus\omega$ and in particular $x\in H_{\sigma}\setminus H$. Furthermore, as $y\in H_{\sigma}$ implies the contradiction $\omega\cap(\tau+\cone(x,y))\subseteq\omega\cap H_{\sigma}\subseteq\sigma$, we have $y\in H_{\omega}\setminus H$. Hence there is a $y_{\omega}\in\,\strecker x,y\streckel\,\cap H$, yielding $\omega\cap(\tau+\cone(x,y_{\omega}))\subseteq\omega\cap H_{\sigma}\subseteq\sigma$ as desired.

The above being done for every $\omega\in\sig_{\max}$ we see that there is a $y'\in\,\strecker x,y\strecker$ such that if $\omega\in\sig_{\max}$ then $\omega\cap(\tau+\cone(x,y'))\subseteq\sigma$. As $\,\strecker x,y\strecker\cap\langle\sigma\rangle=\emptyset$ we get $(\tau+\cone(y'))\cap\sigma=\tau$, hence $\tau+y'\subseteq(\tau+\cone(x,y'))\setminus\sigma$, and therefore $(\tau+y')\cap\omega\subseteq(\tau+\cone(x,y'))\cap\omega\setminus\sigma=\emptyset$ for every $\omega\setminus\sig_{\max}$. Thus, $(\tau+y')\cap|\sig|=\emptyset$. If $\langle\tau\rangle+y'$ meets $|\sig|$ then there are $\omega\in\sig_{\max}\subseteq\sig_{\tau}$ with $y'\in\omega+\tau\subseteq\omega$, hence $\tau+y'\subseteq\tau+\omega\subseteq\omega$, yielding the contradiction $y'+\tau=\emptyset$. Thus $(\langle\tau\rangle+y')\cap|\sig|=\emptyset$, and it is readily checked that $p_{\tau}(y')\in U\setminus|\sig/\tau|$. Herewith the claim is proven in case $d=1$.

Finally, if $d>1$ and the claim holds for strictly smaller values of $d$, then for every $\rho\in\tau_1$ we have $\tau/\rho\fleq\sigma/\rho\subseteq\fr(|\sig/\rho|)$ (\ref{proj}), hence $\sigma/\tau=(\sigma/\rho)/(\tau/\rho)\subseteq\fr(|(\sig/\rho)/(\tau/\rho)|)=\fr(|\sig/\tau|)$, and so the claim follows by \ref{topprop}~b).
\end{proof}

\begin{theorem}\label{top50}
If $\dim(s(\sig))\neq n-1$ and $\sig_{\dim(s(\sig))+1}\neq\emptyset$, then $\sig$ is complete if and only if $\sig/\sigma$ is complete for every $\sigma\in\sig_{\dim(s(\sig))+1}$.
\end{theorem}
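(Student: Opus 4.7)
My plan is to get the forward direction directly from \ref{proj} and to attack the converse by contradiction, reducing first to the fan case and then using \ref{top20} together with \ref{top40} to push non-completeness of $\sig$ down to a quotient $\sig/\tau$.

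For $(\Rightarrow)$ I simply invoke the final sentence of \ref{proj}: a complete semifan stays complete modulo any of its cones. For $(\Leftarrow)$, I first reduce to the case $s(\sig)=0$. By \ref{proj}, $\sig$ is complete if and only if $\sig/s(\sig)$ is, and the bijection $\sig_{\dim s(\sig)+1}\overset{\cong}\longrightarrow(\sig/s(\sig))_1$ coming from $\hap_{s(\sig)}$, together with $\sig/\sigma=(\sig/s(\sig))/(\sigma/s(\sig))$, matches the hypothesis on the quotients $\sig/\sigma$ to the hypothesis that every $(\sig/s(\sig))/\rho$ with $\rho\in(\sig/s(\sig))_1$ be complete. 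Moreover, $\sig_{\dim s(\sig)+1}\neq\emptyset$ forces $\dim V_{s(\sig)}\geq 1$ and $\dim s(\sig)\neq n-1$ forces $\dim V_{s(\sig)}\neq 1$, so $\dim V_{s(\sig)}\geq 2$. Hence it suffices to prove the reduced statement: if $\sig$ is a $W$-fan in $V$ with $n\geq 2$ and $\sig_1\neq\emptyset$, and if $\sig/\rho$ is complete for every $\rho\in\sig_1$, then $\sig$ is complete.

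In this reduced setting I argue by contradiction: assume $|\sig|\neq V$. Then, because $\sig_1\neq\emptyset$ gives $|\sig|\supsetneq\{0\}$, the disjoint nonempty sets $V\setminus|\sig|$ and $|\sig|\setminus\{0\}$ cover $V\setminus\{0\}$. Since $n\geq 2$, $V\setminus\{0\}$ is connected, so $|\sig|\setminus\{0\}$ cannot be open in $V$, whence there exists $x\in\fr(|\sig|)$ with $x\neq 0$. By \ref{top20}, $\omega_x\subseteq\fr(|\sig|)$; as $\omega_x$ is sharp and contains $x\neq 0$, \ref{topprop}\,b) yields $(\omega_x)_1\neq\emptyset$, and any $\tau\in(\omega_x)_1$ lies in $\sig_1$ by closure of $\sig$ under faces. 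Applying \ref{top40} to $\tau\fleq\omega_x\subseteq\fr(|\sig|)$ gives $\omega_x/\tau\subseteq\fr(|\sig/\tau|)$; but the assumed completeness of $\sig/\tau$ forces $\fr(|\sig/\tau|)=\emptyset$ while $\omega_x/\tau=p_\tau(\omega_x)$ is clearly nonempty, a contradiction.

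The crucial and arguably hardest step is producing a point of $\fr(|\sig|)$ with $\omega_x\neq s(\sig)$, because \ref{top40} can only be fed with a face of $\omega_x$ of dimension $\dim s(\sig)+1$ if such a face exists. The two hypotheses of the theorem are exactly what makes this possible: the cone hypothesis $\sig_{\dim s(\sig)+1}\neq\emptyset$ guarantees that $|\sig|$ reaches beyond $s(\sig)$ after reduction, and the dimension hypothesis $\dim s(\sig)\neq n-1$ guarantees that the reduced ambient space is at least $2$-dimensional so that the topological connectedness argument producing a nonzero frontier point is available.
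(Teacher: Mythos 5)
Your proof is correct, and its skeleton coincides with the paper's: reduce to the fan case via \ref{proj} (so that $n\geq 2$ and $\sig_1\neq\emptyset$), then feed a nonzero frontier point $x$ into \ref{top20}, pick $\rho\in(\omega_x)_1$ via \ref{topprop}~b), and contradict completeness of $\sig/\rho$ via \ref{top40}. Where you genuinely diverge is in how the nonzero frontier point is obtained. The paper does not produce it directly: it first shows (by the same \ref{top20}/\ref{top40} argument) that no such point can exist, concludes $\fr(|\sig|)=0$, and then derives a second contradiction by noting that completeness of the quotients forces $\sig_n\neq\emptyset$ (via \ref{proj}), taking $y\in\inn(|\sig|)$ with a neighbourhood $U\subseteq|\sig|$ and $x\in V\setminus|\sig|$, and observing that every segment $\strecker z,x\streckel$ with $z\in U$ must meet $\fr(|\sig|)=0$, so $U\subseteq\cone(-x)$, which is impossible for $n\geq 2$. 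You instead argue once and for all that a noncomplete fan with $\sig_1\neq\emptyset$ in dimension $\geq 2$ has a frontier point $\neq 0$, because $V\setminus\{0\}=(|\sig|\setminus\{0\})\cup(V\setminus|\sig|)$ would otherwise be disconnected by two nonempty open sets. This buys a shorter case analysis: you never need $\sig_n\neq\emptyset$, interior points, or the segment argument, at the modest price of invoking connectedness of $V\setminus\{0\}$ for $n\geq 2$ (which is consistent with the paper's stated preference for arguments using only the canonical topological vector space structure). Both arguments use the hypotheses in the same way: $\sig_{\dim(s(\sig))+1}\neq\emptyset$ supplies $\sig_1\neq\emptyset$ after reduction, and $\dim(s(\sig))\neq n-1$ supplies the ambient dimension $\geq 2$ needed for the respective final contradictions.
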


\begin{proof}
We can suppose that $\sig$ is a $W$-fan (\ref{proj}), hence $n\neq 1$. If $\sig$ is complete then so is $\sig/\sigma$ for every $\sigma\in\sig$ (\ref{proj}). Conversely, suppose that $\sig/\sigma$ is complete for every $\sigma\in\sig_1$ and assume that $\sig$ is noncomplete. If $x\in\fr(|\sig|)\setminus 0$ then $\omega_x\subseteq\fr(|\sig|)$ (\ref{top20}), and there is a $\rho\in(\omega_x)_1$ (\ref{topprop}~b)), implying $\omega_x/\rho\subseteq\fr(|\sig/\rho|)$ (\ref{top40}) and thus the contradiction that $\sig/\rho$ is noncomplete. So, $\fr(|\sig|)=0$ and there is an $x\in V\setminus|\sig|$. As $\sig_1\neq\emptyset$ we have $\sig_n\neq\emptyset$ (\ref{proj}), so that there is a $y\in\inn(|\sig|)$ (\ref{topprop}~a)), hence a $U\in\V(y)$ with $U\subseteq|\sig|$. If $z\in U$ then $\,\strecker z,x\streckel\,$ meets $\fr(|\sig|)=0$, hence $z\in\cone(-x)$. This shows $U\subseteq\cone(-x)$, hence the contradiction $n=1$, and thus the claim is proven.
\end{proof}

\begin{corollary}\label{top60}
If $\sig$ is equifulldimensional and $\ff(\sig)=\emptyset$, then $\sig$ is complete.
\end{corollary}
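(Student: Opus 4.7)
I would prove the corollary by induction on $n=\dim V$, combining the reduction $\sig\mapsto\sig/s(\sig)$ from \ref{proj} with the projection criterion of Theorem \ref{top50} applied ray by ray.

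\textbf{Reduction to $s(\sig)=0$.} Equifulldimensionality gives $\sig_n\neq\emptyset$, so $\sig$ is nonempty and $s(\sig)\in\sig$. By \ref{proj}, $\sig$ is complete iff $\sig/s(\sig)$ is complete, and $\sig/s(\sig)$ is a $W_{s(\sig)}$-fan in $V_{s(\sig)}$ of dimension $n-\dim s(\sig)$. The isomorphism $\hap_{s(\sig)}$ and the bijections in \ref{proj} (together with the statement there that $\sig/\sigma$ inherits equifulldimensionality) transport both hypotheses to the quotient: $\sig/s(\sig)$ is equifulldimensional, and $\ff(\sig/s(\sig))\cong\sig_{s(\sig)}\cap\ff(\sig)=\emptyset$. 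If $\dim s(\sig)>0$ the inductive hypothesis applied in $V_{s(\sig)}$ yields completeness of $\sig/s(\sig)$, and hence of $\sig$. I may therefore assume $s(\sig)=0$, so that $\sig$ is a fan.

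\textbf{Inductive step for $n\geq 2$.} Now $\dim s(\sig)=0\neq n-1$, and any $\sigma\in\sig_n$ is nonzero and sharp, so it has a ray by \ref{topprop}~b), giving $\sig_1\neq\emptyset$. Theorem \ref{top50} then reduces completeness of $\sig$ to completeness of each $\sig/\rho$ with $\rho\in\sig_1$. The bijections in \ref{proj} again transport the hypotheses: $(\sig/\rho)_{\max}$ corresponds to $\sig_\rho\cap\sig_n$ and coincides with the top-dimensional part $(\sig/\rho)_{n-1}$ in $V_\rho$, while $\ff(\sig/\rho)\cong\sig_\rho\cap\ff(\sig)=\emptyset$. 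The inductive hypothesis in dimension $n-1$ gives completeness of each $\sig/\rho$, hence of $\sig$.

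\textbf{Base cases and main obstacle.} The delicate point is the restriction $\dim s(\sig)\neq n-1$ in Theorem \ref{top50}: after reducing to $s(\sig)=0$ this excludes $n=1$, so the cases $n=0$ and $n=1$ have to be disposed of by hand. For $n=0$ the only $W$-polycone is $0=V$, so $\sig=\{0\}$ and completeness is immediate. For $n=1$ with $s(\sig)=0$, the sharp $W$-rays in $\R$ are exactly $\R_{\geq 0}$ and $\R_{\leq 0}$, and the assumption $\ff(\sig)=\emptyset$ forces $0\in\sig_0$ to be a face of at least two cones in $\sig_1$; hence both rays occur and $|\sig|=\R$.
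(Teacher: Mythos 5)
Your argument is correct and is exactly the paper's intended proof: the paper's proof of \ref{top60} is precisely ``induction on $n$ on use of \ref{proj} and \ref{top50}'', which you have carried out in detail (reduction modulo $s(\sig)$, projection along rays via \ref{top50}, and the low-dimensional base cases).
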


\begin{proof}
On use of \ref{proj} and \ref{top50} this follows easily by induction on $n$.
\end{proof}

\begin{proposition}\label{top70}
The following statements are equivalent:
\begin{equi}
\item[\rm(i)] $\sig$ is equifulldimensional or empty;
\item[\rm(ii)] $\fr(|\sig|)=\bigcup\ff(\sig)$;
\item[\rm(iii)] $\cl(\inn(|\sig|))=|\sig|$.
\end{equi}
\end{proposition}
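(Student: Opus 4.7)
The plan is to prove the cycle (i) $\Rightarrow$ (ii) $\Rightarrow$ (iii) $\Rightarrow$ (i); the case $\sig=\emptyset$ is trivial throughout, so I assume $\sig\neq\emptyset$. For (i) $\Rightarrow$ (ii), I establish both inclusions. For $\bigcup\ff(\sig)\subseteq\fr(|\sig|)$: pick $\sigma\in\ff(\sig)$ with its unique $n$-dimensional super-cone $\tau$, and $x\in\inn_{\langle\sigma\rangle}(\sigma)$. By \ref{top10}, $\omega_x=\sigma$, so the cones of $\sig$ containing $x$ are exactly $\{\sigma,\tau\}$ (every $\sigma'\in\sig_\sigma$ has $\dim\sigma'\in\{n-1,n\}$). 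A neighborhood $U$ of $x$ disjoint from the closed set $\bigcup\{\sigma'\in\sig\mid x\notin\sigma'\}$ then satisfies $U\cap|\sig|\subseteq\tau$, and since $x\in\fr(\tau)$ by \ref{topprop}~a), $U$ contains points outside $\tau$ and hence outside $|\sig|$; this shows $x\in\fr(|\sig|)$, and taking closures in $\langle\sigma\rangle$ yields $\sigma\subseteq\fr(|\sig|)$. For $\fr(|\sig|)\subseteq\bigcup\ff(\sig)$: given $x\in\fr(|\sig|)$, \ref{top20} yields $\omega_x\subseteq\fr(|\sig|)$, and \ref{top40} applied with $\tau=\sigma=\omega_x$ gives $0\in\fr(|\sig/\omega_x|)$, so $\sig/\omega_x$ is noncomplete. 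Since $\sig/\omega_x$ is equifulldimensional by \ref{proj}, the contrapositive of \ref{top60} furnishes $\ff(\sig/\omega_x)\neq\emptyset$; lifting via the bijection $\sig_{\omega_x}\cap\ff(\sig)\cong\ff(\sig/\omega_x)$ from \ref{proj} produces $\sigma\in\ff(\sig)$ with $\omega_x\fleq\sigma$, whence $x\in\omega_x\subseteq\sigma$.

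For (ii) $\Rightarrow$ (iii), the inclusion $\cl(\inn(|\sig|))\subseteq|\sig|$ follows from closedness of $|\sig|$. Conversely, any $x\in|\sig|$ lies in $\inn(|\sig|)$ or in $\fr(|\sig|)=\bigcup\ff(\sig)$; in the latter case $x$ lies in some $\sigma\in\ff(\sig)$ and thus in its unique super-cone $\tau\in\sig_n$, giving $x\in\tau=\cl(\inn(\tau))\subseteq\cl(\inn(|\sig|))$ by \ref{topprop}~a) and monotonicity of the interior.

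For (iii) $\Rightarrow$ (i), assume for contradiction that some maximal $\sigma\in\sig_{\max}$ has $\dim\sigma<n$. Picking $x\in\inn_{\langle\sigma\rangle}(\sigma)$ gives $\omega_x=\sigma$ (\ref{top10}), and by maximality $\sig_\sigma=\{\sigma\}$, so a neighborhood $U$ of $x$ avoiding all other cones of $\sig$ satisfies $U\cap|\sig|\subseteq\sigma$. Any $y\in\inn(|\sig|)\cap U$ would admit an open neighborhood in $V$ contained in $U\cap|\sig|\subseteq\sigma$, forcing $\sigma$ to be full by \ref{topprop}~a) and contradicting $\dim\sigma<n$; thus $\inn(|\sig|)\cap U=\emptyset$, contradicting $x\in|\sig|=\cl(\inn(|\sig|))$.

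The main difficulty is the inclusion $\fr(|\sig|)\subseteq\bigcup\ff(\sig)$ in (i) $\Rightarrow$ (ii): the key idea is to translate frontier membership into noncompleteness of the projected semifan $\sig/\omega_x$ by means of \ref{top40}, then exploit the combinatorial completeness criterion \ref{top60} together with the lifting bijection of \ref{proj} to locate $x$ inside a cone of $\ff(\sig)$.
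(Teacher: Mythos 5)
Your proof is correct and follows essentially the same route as the paper: the hard inclusion $\fr(|\sig|)\subseteq\bigcup\ff(\sig)$ is derived exactly as in the paper from \ref{top20}, \ref{top40}, \ref{top60} and the lifting bijection of \ref{proj}, and (ii)$\Rightarrow$(iii) is the same. Your arguments for $\bigcup\ff(\sig)\subseteq\fr(|\sig|)$ and for (iii)$\Rightarrow$(i) are mild local variants of the paper's (a direct argument at a relative-interior point, rather than the paper's contradiction using equifulldimensionality, respectively its nowhere-density argument for $\bigcup\dd(\sig)$), but they are equally valid.
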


\begin{proof}
``(i)$\Rightarrow$(ii)'': If $x\in\fr(|\sig|)$ then $\omega_x\subseteq\fr(|\sig|)$ (\ref{top20}), so $\sig/\omega_x$ is not complete (\ref{top40}). This implies $\ff(\sig/\omega_x)\neq\emptyset$ (\ref{top60}), hence $\sig_{\omega_x}\cap\ff(\sig)\neq\emptyset$ (\ref{proj}) and thus $x\in\bigcup\ff(\sig)$. Conversely, let $\sigma\in\ff(\sig)$ and $\tau\in\sig_n$ with $\sigma\flneq\tau$, and assume $\sigma\not\subseteq\fr(|\sig|)$. Then $\sigma$ meets $\inn(|\sig|)$, hence there is a $y\in\inn_{\langle\sigma\rangle}(\sigma)\cap\inn(|\sig|)$ (\ref{topprop}~a)), implying $y\notin\bigcup\sig_{\geq n-1}\setminus\{\sigma,\tau\}$ (\ref{top10}). So, there is an open $U\in\V(y)$ with $U\subseteq|\sig|$ and $U\cap\langle\sigma\rangle\subseteq\sigma$ such that the only cones in $\sig_{\geq n-1}$ met by $U$ are $\tau$ and $\sigma$. Equifulldimensionality of $\sig$ implies $U\subseteq\tau$, hence $U\subseteq\inn(\tau)$ and therefore the contradiction $y\in\tau\setminus\sigma$ (\ref{topprop}~a)). This shows $\sigma\subseteq\fr(|\sig|)$.

``(ii)$\Rightarrow$(iii)'': For $\sigma\in\ff(\sig)$ there is a $\tau\in\sig_n$ with $\sigma\flneq\tau$, hence $\sigma\subseteq\fr(\tau)=\fr(\inn(\tau))$ (\ref{topprop}~a)). As $\inn(\tau)\subseteq\inn(|\sig|)$ this shows $\sigma\subseteq\cl(\inn(|\sig|))$. It follows $\fr(|\sig|)=\bigcup\ff(\sig)\subseteq\cl(\inn(|\sig|))$.

``(iii)$\Rightarrow$(i)'': As $|\sig|=(\bigcup\dd(\sig))\cup(\bigcup\sig_n)$ and $\dd(\sig)$ is nowhere dense in $V$ (\ref{topprop}~a)) we have $\inn(|\sig|)=\inn(\bigcup\sig_n)$, hence $|\sig|=\cl(\inn(\bigcup\sig_n))\subseteq\bigcup\sig_n\subseteq|\sig|$ and therefore $|\sig|=\bigcup\sig_n$. So, every $\sigma\in\dd(\sig)$ is covered by the family $(\tau\cap\sigma)_{\tau\in\sig_n}$ in $\pface(\sigma)$ and hence nowhere dense in $\langle\sigma\rangle$, implying $\dd(\sig)=\emptyset$ (\ref{topprop}~a)).
\end{proof}

\begin{theorem}\label{top80}
$\fr(|\sig|)=(\bigcup\dd(\sig))\cup(\bigcup\ff(\sig))$.
\end{theorem}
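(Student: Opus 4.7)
The plan is to split $\sig$ according to the distinction between full-dimensional and defective maximal cones. Set $\sig'\dfgl\bigcup_{\tau\in\sig_n}\face(\tau)$; this is a $W$-subsemifan of $\sig$, equifulldimensional or empty, with $(\sig')_n=\sig_n$. Every element of $\sig$ is either below some cone in $\sig_n$ (hence in $\sig'$) or maximal and not full-dimensional (hence in $\dd(\sig)$), which gives $|\sig|=|\sig'|\cup\bigcup\dd(\sig)$. Moreover, every $\sigma\in\ff(\sig)$ has a full-dimensional parent and therefore lies in $\sig'$, from which one reads off directly that $\ff(\sig')=\ff(\sig)$. Proposition \ref{top70} then yields $\fr(|\sig'|)=\bigcup\ff(\sig)$.

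Since each $\sigma\in\dd(\sig)$ has $\dim(\sigma)<n$ and hence is nowhere dense in $V$ (\ref{topprop}~a)), so is the finite union $\bigcup\dd(\sig)$. I will rely on the following elementary observation: if $A,B\subseteq V$ are closed and $B$ is nowhere dense, then $\inn(A\cup B)=\inn(A)$, since $\inn(A\cup B)\setminus A$ is an open subset of the nowhere dense set $B$ and therefore empty. Applied to $A=|\sig'|$ and $B=\bigcup\dd(\sig)$, this gives $\inn(|\sig|)=\inn(|\sig'|)$, whence $\fr(|\sig|)=\fr(|\sig'|)\cup(\bigcup\dd(\sig)\setminus\inn(|\sig'|))$.

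The main step, and the only nontrivial one, is to show $\bigcup\dd(\sig)\subseteq\fr(|\sig|)$. Fix $\sigma\in\dd(\sig)$. Being maximal in $\sig$, no other cone of $\sig$ has $\sigma$ as a face, so $\sig\setminus\{\sigma\}$ is again a $W$-semifan. For $x\in\inn_{\langle\sigma\rangle}(\sigma)$ we have $\omega_{x,\sig}=\sigma$ by \ref{top10}, so $x$ belongs to no cone of $\sig$ other than $\sigma$; combined with $\sigma\setminus\inn_{\langle\sigma\rangle}(\sigma)=\bigcup\pface(\sigma)\subseteq\sig\setminus\{\sigma\}$ (\ref{topprop}~a)) this yields $|\sig\setminus\{\sigma\}|=|\sig|\setminus\inn_{\langle\sigma\rangle}(\sigma)$, which is closed as a finite union of closed cones. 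Writing $|\sig|=|\sig\setminus\{\sigma\}|\cup\sigma$ with $\sigma$ closed and nowhere dense and applying the observation once more gives $\inn(|\sig|)=\inn(|\sig\setminus\{\sigma\}|)\subseteq|\sig\setminus\{\sigma\}|$, so $\inn_{\langle\sigma\rangle}(\sigma)\cap\inn(|\sig|)=\emptyset$, i.e.\ $\inn_{\langle\sigma\rangle}(\sigma)\subseteq\fr(|\sig|)$. Closedness of $\fr(|\sig|)$ together with $\sigma=\cl_{\langle\sigma\rangle}(\inn_{\langle\sigma\rangle}(\sigma))$ (\ref{topprop}~a)) then yields $\sigma\subseteq\fr(|\sig|)$, and combining this with the first two paragraphs gives the claimed equality.
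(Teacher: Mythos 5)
Your proof is correct and is essentially the paper's own argument: decompose $|\sig|$ as $|\sig'|\cup(\bigcup\dd(\sig))$ with $\sig'\dfgl\bigcup_{\tau\in\sig_n}\face(\tau)$, apply \ref{top70} to the equifulldimensional (or empty) $\sig'$, and use closedness and nowhere-density of $\bigcup\dd(\sig)$ -- your third paragraph merely spells out the inclusion $\bigcup\dd(\sig)\subseteq\fr(|\sig|)$ that the paper's first sentence leaves implicit. One small slip: the dichotomy ``every element of $\sig$ is a face of a full cone or is itself maximal and non-full'' is false as stated (a proper face of a cone in $\dd(\sig)$ need not be a face of any full cone nor maximal), but the identity $|\sig|=|\sig'|\cup(\bigcup\dd(\sig))$ that you actually use still holds, since every cone of $\sig$ is contained in a maximal one, which lies in $\sig_n$ or in $\dd(\sig)$.
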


\begin{proof}
As $|\sig|=(\bigcup\dd(\sig))\cup(\bigcup\sig_n)$ and $\dd(\sig)$ is closed and nowhere dense in $V$ (\ref{topprop}~a)) we have $\fr(|\sig|)=(\bigcup\dd(\sig))\cup\fr(\bigcup\sig_n)$. As the $W$-subsemifan $\sig'\dfgl\bigcup_{\sigma\in\sig_n}\face(\sigma)$ of $\sig$ is equifulldimensional or empty with $\ff(\sig')=\ff(\sig)$, the claim follows from \ref{top70}.
\end{proof}

\begin{corollary}\label{top100}
a) If $\tau\fleq\sigma\in\sig$, then $\sigma\subseteq\fr(|\sig|)$ if and only if $\sigma/\tau\subseteq\fr(|\sig/\tau|)$.

b) Every $\sigma\in\sig$ with $\sigma\subseteq\inn(|\sig|)$ is the intersection of a family in $\sig_n$.
\end{corollary}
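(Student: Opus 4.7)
The plan is to derive both parts from Theorem \ref{top80} combined with the bijections in \ref{proj}.

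For (a), the forward implication is exactly Proposition \ref{top40}. For the converse, suppose $\sigma/\tau\subseteq\fr(|\sig/\tau|)$. Since $\sigma/\tau$ is full in $\langle\sigma/\tau\rangle$, its relative interior is nonempty by \ref{topprop}~a); pick $y\in\inn_{\langle\sigma/\tau\rangle}(\sigma/\tau)$, so $\omega_{y,\sig/\tau}=\sigma/\tau$ by \ref{top10}. By \ref{top80}, $y$ lies in some $\delta'\in\dd(\sig/\tau)\cup\ff(\sig/\tau)$, hence $\sigma/\tau\fleq\delta'$. Using the order-isomorphism $\hap_\tau$ from \ref{proj} together with its two restrictions $\sig_\tau\cap\dd(\sig)\overset{\cong}\longrightarrow\dd(\sig/\tau)$ and $\sig_\tau\cap\ff(\sig)\overset{\cong}\longrightarrow\ff(\sig/\tau)$, I lift $\delta'$ to a $\delta\in\sig_\tau\cap(\dd(\sig)\cup\ff(\sig))$ with $\sigma\fleq\delta$, and conclude $\sigma\subseteq\delta\subseteq\fr(|\sig|)$ by \ref{top80} once more.

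For (b), I first apply (a) with $\tau=\sigma$. The hypothesis $\sigma\subseteq\inn(|\sig|)$ excludes $\sigma\subseteq\fr(|\sig|)$, so $\{0\}\not\subseteq\fr(|\sig/\sigma|)$, meaning $0\in\inn(|\sig/\sigma|)$. Moreover, if there were $\delta\in\sig_\sigma\cap\dd(\sig)$, then $\sigma\subseteq\delta\subseteq\fr(|\sig|)$ by \ref{top80} -- contradiction; hence $\dd(\sig/\sigma)=\emptyset$ by \ref{proj}, so every maximal cone of $\sig/\sigma$ is full-dimensional in $V_\sigma$ and $\sig_\sigma\cap\sig_{\max}=\sig_\sigma\cap\sig_n$.

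I next show $\bigcap(\sig/\sigma)_{\max}=\{0\}$. Given $y\in V_\sigma\setminus 0$, since $0\in\inn(|\sig/\sigma|)$ there is $\eps>0$ with $-\eps y\in|\sig/\sigma|$, so $-y$ lies in some $\omega'\in(\sig/\sigma)_{\max}$. Were $y\in\omega'$ as well, $\omega'$ would contain the line $\langle y\rangle$, contradicting sharpness of the fan $\sig/\sigma$. Transporting back via $\hap_\sigma$ yields $\bigcap_{\omega\in\sig_\sigma\cap\sig_n}(\omega/\sigma)=\{0\}$, and hence $\bigcap_{\omega\in\sig_\sigma\cap\sig_n}\omega\subseteq\langle\sigma\rangle$. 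Combining with the standard face identity $\omega\cap\langle\sigma\rangle=\sigma$ whenever $\sigma\fleq\omega$ (choose $u\in\omega^\vee$ with $\sigma=\omega\cap u^\perp$ and observe that $u$ vanishes on $\langle\sigma\rangle=\sigma-\sigma$) gives $\sigma=\bigcap_{\omega\in\sig_\sigma\cap\sig_n}\omega$. The main obstacle is purely the bookkeeping of how $\sig_\sigma$, $\dd$, $\ff$, and $\sig_n$ transport under the isomorphisms of \ref{proj}; the only genuinely geometric step is the sharp-cone argument forcing $y\notin\omega'$.
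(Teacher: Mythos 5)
Your proof is correct, and while part a) runs along the same lines as the paper (which simply cites \ref{top80} and \ref{proj}; invoking \ref{top40} for the forward direction is an equivalent shortcut), your part b) takes a genuinely different route. The paper argues combinatorially in $V$ itself: if $\dim(\sigma)\geq n-1$ the claim is immediate from \ref{top80} (such a $\sigma$ is neither in $\dd(\sig)$ nor in $\ff(\sig)$, so it lies in at least two full cones and equals their intersection), and otherwise $\sigma$ is a face of a full cone, so by \ref{topprop}~b) it is an intersection of facets of that cone, each of which, not being contained in $\fr(|\sig|)$, is itself the intersection of two full cones. You instead pass to the quotient: part a) with $\tau=\sigma$ gives $0\in\inn(|\sig/\sigma|)$, the absence of elements of $\dd(\sig)$ above $\sigma$ (again \ref{top80}) makes every maximal cone of $\sig/\sigma$ full, the antipodal/sharpness argument shows $\bigcap(\sig/\sigma)_{\max}=\{0\}$, and you pull this back with the identity $\omega\cap\langle\sigma\rangle=\sigma$ for $\sigma\fleq\omega$. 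Your version avoids the case distinction and the facet description of faces of full cones, at the price of leaning more heavily on \ref{proj} (in particular on the nontrivial fact that $\sig/\sigma$ is a fan, i.e.\ has sharp cones) and on part a); both arguments ultimately rest on the frontier description \ref{top80}.
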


\begin{proof}
a) follows easily from \ref{top80} and \ref{proj}. b) If $\dim(\sigma)\geq n-1$ this is clear by \ref{top80}. Otherwise $\sigma$ is a face of a full cone in $\sig$ (\ref{top80}) and then the claim follows from \ref{topprop}~b).
\end{proof}


\section{Packings and strong completions}\label{sec5}

\textit{Throughout this section let $\sig$ be a $W$-fan.}\medskip

We introduce three properties of an extension $\sig\subseteq\sig'$. First, \textit{relative simpliciality} means that $\sig'$ is ``as simplicial as possible''; this will be needed to construct a completion that is simplicial in case $\sig$ is so. Second, \textit{separability} means that $\sig'$ is ``as independent as possible'' from $\sig$; this will allow us to make certain changes to $\sig'$ without changing $\sig$. Third, \textit{tight separability} strengthens separability by ensuring that $\sig'$ is on one hand independent of $\sig$ (that is, separable), but on the other hand not too much bigger than $\sig$ (in some topological sense). It is at this point where in \cite{ei} the metric $\eps$-arguments enter the scene.\medskip

\begin{no}
By abuse of language, for a $W$-polycone $\sigma$ we set $\sigma\cap\sig\dfgl\{\sigma\cap\tau\mid\tau\in\sig\}$, and $\sigma$ is called \textit{free over $\sig$} if $\sigma\cap\sig\subseteq\{0\}$. If $\sigma$ is free over $\sig$ then so is every face of $\sigma$.  If $\sig'$ is a $W$-extension of $\sig$ with $\sigma\in\sig'$ and $(\sigma_i)_{i\in I}$ is a family in $\sig'$ with $\sigma=\bigoplus_{i\in I}\sigma_i$, then $\sigma$ is free over $\sig$ if and only if $\sigma_i$ is free over $\sig$ for every $i\in I$ (\ref{dec20}). A set of $W$-polycones is called \textit{free over $\sig$} if all its elements are so. If $\sig'$ is a $W$-fan that is free over $\sig$, then every $W$-subdivision of $\sig'$ is free over every $W$-subfan of $\sig$. 

A $W$-extension $\sig'$ of $\sig$ is called \textit{relatively simplicial (over $\sig$)} if every cone in $\sig'$ that is free over $\sig$ is simplicial. Clearly, $\sig$ is relatively simplicial over itself, and simplicial $W$-extensions of $\sig$ are relatively simplicial. Conversely, if $\sig$ is simplicial then a relatively simplicial $W$-extension of $\sig$ is not necessarily simplicial.

Let $\sig\subseteq\sig'\subseteq\sig''$ be $W$-extensions. If $\sig\subseteq\sig''$ is relatively simplicial then so are $\sig\subseteq\sig'$ and $\sig'\subseteq\sig''$, but the converse does not necessarily hold.
\end{no}

\begin{example}
If $n=3$, then the facial fan of a nonsimplicial $W$-polycone is relatively simplicial over all its $W$-subfans except $\{0\}$ and $\emptyset$.
\end{example}

\begin{no}\label{pack20}
Let $\sigma$ be a $W$-polycone. If there is a pair $(\tau,\tau')$ of $W$-polycones such that $\sigma=\tau\oplus\tau'$, that $\tau\in\sig\cup\{0\}$ and that $\tau'$ is sharp and free over $\sig$, then $\sigma$ is sharp, $\tau_1=\sigma_1\cap\sig$ and $\tau'_1=\sigma_1\setminus\sig$ (\ref{topprop}~b), \ref{dec20}), so that there is at most one such pair (\ref{topprop}~b)). If such a pair exists then $\sigma$ is called \textit{separable over $\sig$} and we set $\inn_{\sig}(\sigma)\dfgl\tau$ and $\exte_{\sig}(\sigma)\dfgl\tau'$.

Let $\sigma$ be separable over $\sig$. If $\tau\fleq\sigma$ then $\tau$ is separable over $\sig$ with $\inn_{\sig}(\tau)=\tau\cap\inn_{\sig}(\sigma)\fleq\inn_{\sig}(\sigma)$ and $\exte_{\sig}(\tau)=\tau\cap\exte_{\sig}(\sigma)\fleq\exte_{\sig}(\sigma)$ (\ref{dec20}). If $\omega$ is a further $W$-polycone that is separable over $\sig$ such that $\sigma\cap\omega\in\face(\sigma)\cap\face(\omega)$, then by the above $\sigma\cap\omega$ is separable over $\sig$ with $\inn_{\sig}(\sigma\cap\omega)=\inn_{\sig}(\sigma)\cap\inn_{\sig}(\omega)$ and $\exte_{\sig}(\sigma\cap\omega)=\exte_{\sig}(\sigma)\cap\exte_{\sig}(\omega)$.

A $W$-extension $\sig'$ of $\sig$ is called \textit{separable (over $\sig$)} if every cone in $\sig'$ is separable over $\sig$, and then we set $\exte_{\sig}(\sig')\dfgl\{\exte_{\sig}(\sigma)\mid\sigma\in\sig'\}$. Clearly, $\sig$ is separable over itself. If $\sig$ is simplicial then so is every separable, relatively simplicial $W$-extension of $\sig$ (\ref{dec70}).

Let $\sig\subseteq\sig'\subseteq\sig''$ be $W$-extensions. If $\sig\subseteq\sig'$ and $\sig'\subseteq\sig''$ are separable then so is $\sig\subseteq\sig''$, and if $\sigma\in\sig''$ then $\inn_{\sig}(\sigma)=\inn_{\sig}(\inn_{\sig'}(\sigma))$ and $\exte_{\sig}(\sigma)=\exte_{\sig}(\inn_{\sig'}(\sigma))\oplus\exte_{\sig'}(\sigma)$. Conversely, if $\sig\subseteq\sig''$ is separable then so is $\sig\subseteq\sig'$, but $\sig'\subseteq\sig''$ is not necessarily so.
\end{no}

\begin{no}
A separable $W$-extension $\sig'$ of $\sig$ is called \textit{tightly separable (over $\sig$)} if for every $\sigma\in\sig'$ we have $\exte_{\sig}(\sigma)\setminus 0\subseteq\fr_{\langle|\sig'|\rangle}(|\sig'|)$. Clearly, $\sig$ is tightly separable over itself.

Let $\sig\subseteq\sig'\subseteq\sig''$ be $W$-extensions. If $\sig\subseteq\sig''$ is tightly separable and $\langle|\sig'|\rangle=\langle|\sig''|\rangle$, then $\sig\subseteq\sig'$ is tightly separable. If $\sig\subseteq\sig''$ is tightly separable and $\sig'\subseteq\sig''$ is separable, then $\sig'\subseteq\sig''$ is tightly separable. Conversely, if $\sig\subseteq\sig'$ and $\sig'\subseteq\sig''$ are tightly separable then $\sig\subseteq\sig''$ is not necessarily so, even if $\langle|\sig|\rangle=\langle|\sig''|\rangle$.
\end{no}

\begin{example}
For $i\in[1,3]$ we denote by $\sigma^{(i)}$ the $i$-th quadrant in $\R^2$. Then, the $\Q^2$-fan $\bigcup_{i=1}^3\face(\sigma^{(i)})$ is tightly separable over $\face(\sigma^{(2)})$, but not over $\face(\sigma^{(1)})$.
\end{example}

Finally, we put the above notions together to define what we are ultimately going to construct.\medskip

\begin{no}
A \textit{$W$-quasipacking of $\sig$} is a $W$-extension $\sig\subseteq\sig'$ such that $|\sig|\setminus 0\subseteq\inn(|\sig'|)$. A \textit{$W$-packing of $\sig$} is a relatively simplicial, tightly separable $W$-quasipacking $\sig\subseteq\sig'$ such that $\sig'_1$ is empty or $\sig'$ is equifulldimensional. A \textit{strong $W$-completion of $\sig$} is a pair $(\sigquer,\sighut)$ such that $\sigquer$ is a $W$-packing of $\sig$ and $\sighut$ is a $W$-completion of $\sigquer$ that is relatively simplicial over $\sig$. Finally, if $\sig\subseteq\sig'$ is a $W$-extension then we denote by $C_{\sig}(\sig')$ and $c_{\sig}(\sig')$ the set $\{\rho\in\sig_1\mid\sig'/\rho\text{ is noncomplete}\}$ and its cardinality, respectively.
\end{no}

\begin{examples}\label{scexas}
a) $\sig$ is a $W$-packing of itself if and only if it is a $W$-quasipacking of itself (\ref{topprop}~a)), and this holds if and only if $\sig$ is complete, or $\sig_1=\emptyset$, or $n=1$.

b) If $\sig\subseteq\sigquer$ is a $W$-packing then $(\sigquer,\sigquer)$ is a strong $W$-completion of $\sig$ if and only if $\sigquer$ is complete. In particular, $(\sig,\sig)$ is a strong $W$-completion of $\sig$ if and only if $\sig$ is complete.

c) If $\sig_1=\emptyset$ and $\Omega$ is a complete, simplicial $W$-fan then $(\sig,\Omega)$ is a strong $W$-completion of $\sig$.

d) If $n=1$, $x\in W\setminus 0$, $\sig\dfgl\{0,\cone(x)\}$ and $\sighut\dfgl\{0,\cone(x),\cone(-x)\}$, then $(\sig,\sighut)$ is a strong $W$-completion of $\sig$.
\end{examples}

\begin{proposition}\label{qpc}
Let $\sig\subseteq\sig'$ be a $W$-extension. Then, $C_{\sig}(\sig')=\{\rho\in\sig_1\mid\rho\subseteq\fr(|\sig'|)\}$, and $\sig'$ is a $W$-quasipacking of $\sig$ if and only if $c_{\sig}(\sig')=0$.
\end{proposition}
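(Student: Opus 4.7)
The plan is to prove the two claims in turn, with the second building on the first, and both reducing to the combinatorial frontier description from Section \ref{sec3}.

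For the first claim I would fix $\rho\in\sig_1$ and apply Corollary \ref{top100}~a) inside the $W$-semifan $\sig'$ with $\tau=\sigma=\rho$ (using $\rho\in\sig\subseteq\sig'$) to obtain that $\rho\subseteq\fr(|\sig'|)$ is equivalent to $\rho/\rho=\{0\}\subseteq\fr(|\sig'/\rho|)$. It would then remain to check that $0\in\fr(|\sig'/\rho|)$ is equivalent to noncompleteness of $\sig'/\rho$. One direction is immediate: completeness of $\sig'/\rho$ means $|\sig'/\rho|=V_\rho$, which has empty frontier. For the converse, if $\sig'/\rho$ is noncomplete then $|\sig'/\rho|$ is a closed, nonempty, proper subset of $V_\rho$ and hence has nonempty frontier; picking $y$ there, Proposition \ref{top20} applied to the fan $\sig'/\rho$ (cf.\ \ref{proj}) would give $0\in\omega_y\subseteq\fr(|\sig'/\rho|)$, as desired.

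For the second claim I would first rewrite the quasipacking condition $|\sig|\setminus 0\subseteq\inn(|\sig'|)$ (using $|\sig|\subseteq|\sig'|$, which follows from $\sig\subseteq\sig'$) as $(|\sig|\setminus 0)\cap\fr(|\sig'|)=\emptyset$. In view of the first claim, it would then suffice to show that this is equivalent to no $\rho\in\sig_1$ being contained in $\fr(|\sig'|)$. The forward direction is immediate, since $\rho\setminus 0$ is a nonempty subset of $|\sig|\setminus 0$ for every $\rho\in\sig_1$. The reverse direction is where I expect the real work: given $x\in(|\sig|\setminus 0)\cap\fr(|\sig'|)$ one has to produce a ray of $\sig$ sitting inside $\fr(|\sig'|)$. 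My plan is to consider $\omega\dfgl\omega_{x,\sig'}$, the smallest cone of $\sig'$ containing $x$. Since $\omega_{x,\sig}\in\sig\subseteq\sig'$ also contains $x$, minimality of $\omega$ gives $\omega\subseteq\omega_{x,\sig}$, and then the semifan axiom applied to $\omega,\omega_{x,\sig}\in\sig'$ yields $\omega=\omega\cap\omega_{x,\sig}\fleq\omega_{x,\sig}$. Thus $\omega$ is a face of the sharp cone $\omega_{x,\sig}\in\sig$ and is therefore itself sharp, and since $x\in\omega\setminus 0$ one has $\dim(\omega)\geq 1$, so $\omega_1\neq\emptyset$ by \ref{topprop}~b). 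Any $\rho\in\omega_1$ is then a face of $\omega_{x,\sig}\in\sig$, hence belongs to $\sig_1$ by closure of semifans under faces, while $\rho\subseteq\omega\subseteq\fr(|\sig'|)$ by Proposition \ref{top20} applied to $\sig'$, providing the required ray.
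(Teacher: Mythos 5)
Your proof is correct and takes essentially the same route as the paper: both reduce the statement to the combinatorial frontier description of Section \ref{sec3} and use \ref{top20} to promote a frontier point of $|\sig'|$ to a ray of $\sig_1$ contained in $\fr(|\sig'|)$. The only cosmetic differences are that you cite \ref{top100}~a) (itself a consequence of \ref{top80} and \ref{proj}, which the paper invokes directly) for the first claim, and that in the converse of the second claim you use $\omega_{x,\sig'}\fleq\omega_{x,\sig}$ together with the already-proved first claim, where the paper instead notes $\omega_{x,\sig}=\omega_{x,\sig'}$ and applies \ref{top40} and \ref{topprop}~b).
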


\begin{proof}
The first statement follows immediately from \ref{top80} and \ref{proj}. Hence, if $\sig'$ is a $W$-quasipacking of $\sig$ then $c_{\sig}(\sig')=0$. Conversely, assume that $c_{\sig}(\sig')=0$ and $|\sig|\setminus 0\not\subseteq\inn(|\sig'|)$. There is an $x\in|\sig|\setminus 0$ with $x\in\fr(|\sig'|)$, and it follows $0\neq\omega_{x,\sig}=\omega_{x,\sig'}\subseteq\fr(|\sig'|)$ (\ref{top20}), implying the contradiction that $\sig'/\rho$ is noncomplete for every $\rho\in(\omega_x)_1$ (\ref{top40}, \ref{topprop}~b)), and thus showing the second statement.
\end{proof}

\begin{proposition}\label{quasilemma}
If $\sig'$ is a separable $W$-quasipacking of $\sig$ then every $W$-exten\-sion of $\sig'$ is separable over $\sig$.
\end{proposition}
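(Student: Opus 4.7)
Given a $W$-extension $\sig'' \supseteq \sig'$ (necessarily a $W$-fan, since separability presupposes sharpness), the plan is to show every $\sigma \in \sig''$ is separable over $\sig$ by splitting into two cases according to $A \dfgl \sigma_1 \cap \sig$.

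When $A = \emptyset$, I will argue that $\sigma$ is free over $\sig$, whereupon the decomposition $\sigma = 0 \oplus \sigma$ will witness separability over $\sig$ with $\inn_{\sig}(\sigma) = 0$ and $\exte_{\sig}(\sigma) = \sigma$. For each $\tau \in \sig$, the fan property of $\sig''$ makes $\sigma \cap \tau$ a common face of $\sigma$ and $\tau$, hence a cone of $\sig$; if it were nonzero, its sharpness would yield a ray which, by transitivity of $\fleq$, would be a ray of $\sigma$ and simultaneously a cone of $\sig$, contradicting $A = \emptyset$.

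When $A \neq \emptyset$, the strategy is to prove $\sigma \in \sig'$ and then invoke the separability of $\sig \subseteq \sig'$ directly. I would pick $\rho \in A$ and $x \in \rho \setminus 0$: the quasipacking hypothesis then places $x$ in $\inn(|\sig'|)$, so some open $U \in \V(x)$ satisfies $U \subseteq |\sig'|$. By the fan-intersection property in $\sig''$, each $\sigma \cap \sigma'$ with $\sigma' \in \sig'$ is a face of $\sigma$ belonging to $\sig'$, so
\[
U \cap \sigma \;\subseteq\; |\sig'| \cap \sigma \;=\; \bigcup (\face(\sigma) \cap \sig').
\]
Were $\sigma \notin \sig'$, the right-hand side would lie in $\bigcup \pface(\sigma) = \fr_{\langle \sigma \rangle}(\sigma)$ by \ref{topprop}~a); but since $U$ is open in $V$ and meets $\sigma$, the same \ref{topprop}~a) would also give $U \cap \inn_{\langle \sigma \rangle}(\sigma) \neq \emptyset$, contradicting $U \cap \sigma \subseteq \fr_{\langle \sigma \rangle}(\sigma)$. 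Hence $\sigma \in \sig'$.

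The principal obstacle will be the second case, where one must translate the abstract quasipacking condition into the concrete statement that every cone of $\sig''$ sharing a ray with $\sig$ already lies in $\sig'$. The crux is realizing $|\sig'| \cap \sigma$ as a union of faces of $\sigma$ in $\sig'$ via the fan-intersection property, so that the nowhere denseness of proper faces in $\langle \sigma \rangle$ can be pitted against the openness of the neighbourhood $U$ to force $\sigma$ itself to be a cone of $\sig'$.
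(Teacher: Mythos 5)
Your proof is correct and is essentially the paper's argument run in the contrapositive: the paper shows that any $\sigma\in\sig''\setminus\sig'$ misses $\inn(|\sig'|)$ (by the same relative-interior versus proper-face argument from \ref{topprop}~a) combined with the fan-intersection and face-closure properties) and hence, by the quasipacking inclusion $|\sig|\setminus 0\subseteq\inn(|\sig'|)$, is free over $\sig$ and thus separable with trivial interior part, whereas you show that a cone meeting $|\sig|\setminus 0$ (equivalently, having a ray in $\sig$) must already lie in $\sig'$. Your additional combinatorial step identifying freeness over $\sig$ with $\sigma_1\cap\sig=\emptyset$ is sound but is not needed in the paper's formulation.
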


\begin{proof}
Let $\sig''$ be a $W$-extension of $\sig'$ and let $\sigma\in\sig''\setminus\sig'$. If $\sigma$ meets $\inn(|\sig|)$ then there are a $\tau\in\sig$ and an $x\in\inn_{\langle\sigma\rangle}(\sigma)\cap\tau$ (\ref{topprop}~a)), implying $x\in\sigma\cap\tau\flneq\sigma$ and hence the contradiction $x\in\fr_{\langle\sigma\rangle}(\sigma)$ (\ref{topprop}~a)). So, we get $\sigma\cap|\sig|\setminus 0\subseteq\sigma\cap\inn(|\sig|)=\emptyset$, implying that $\sigma$ is free over $\sig$ and thus the claim. 
\end{proof}

\begin{proposition}\label{fff}
{\rm a)} If $\sig'$ is an equifulldimensional, tightly separable $W$-exten\-sion of $\sig$ then $\exte_{\sig}(\sig')\subseteq\fff(\sig')$.

{\rm b)} If $\sig'$ is a $W$-quasipacking of $\sig$ then $\fff(\sig')\subseteq\exte_{\sig}(\sig')$.
\end{proposition}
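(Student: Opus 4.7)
The plan for part~a) is to push $\exte_{\sig}(\sigma)$ onto the frontier of $|\sig'|$ using tight separability, and then to recognise that frontier combinatorially via Theorem~\ref{top80}. Fix $\sigma\in\sig'$ and set $\tau\dfgl\exte_{\sig}(\sigma)$; since $\tau\fleq\sigma\in\sig'$ we have $\tau\in\sig'$, and $\tau$ is sharp (\ref{pack20}). If $\tau=0$ then $\tau$ is a face of every cone in $\ff(\sig')$, so $\tau\in\fff(\sig')$ as soon as $\ff(\sig')\neq\emptyset$. Suppose now $\tau\neq 0$. By tight separability $\tau\setminus 0\subseteq\fr_{\langle|\sig'|\rangle}(|\sig'|)$, and equifulldimensionality forces $\langle|\sig'|\rangle=V$ and $\dd(\sig')=\emptyset$, so Theorem~\ref{top80} simplifies the frontier to $\fr(|\sig'|)=\bigcup\ff(\sig')$. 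Pick $x\in\inn_{\langle\tau\rangle}(\tau)$, which is nonempty and disjoint from $0$; by \ref{top10} we have $\omega_{x,\sig'}=\tau$, and by what we just showed there is $\phi\in\ff(\sig')$ with $x\in\phi$. The fan axioms give $\tau=\omega_{x,\sig'}=\omega_{x,\sig'}\cap\phi\fleq\phi$, whence $\tau\in\face(\phi)\subseteq\fff(\sig')$.

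For part~b) the plan is dual: show that every face of every $\ff(\sig')$-cone is free over $\sig$, and hence coincides with its own $\exte_{\sig}$. Let $\phi\in\ff(\sig')$ and $\tau\fleq\phi$. Theorem~\ref{top80} gives $\phi\subseteq\bigcup\ff(\sig')\subseteq\fr(|\sig'|)$, so $\tau\subseteq\fr(|\sig'|)$. If some $x\in\tau\cap|\sig|$ were nonzero, the quasipacking hypothesis would force $x\in\inn(|\sig'|)$, contradicting $x\in\fr(|\sig'|)$. Thus $\tau\cap|\sig|\subseteq 0$, i.e.\ $\tau$ is free over $\sig$. Since $\tau$ is sharp (being a face of the sharp $\phi$), the decomposition $\tau=0\oplus\tau$ with $0\in\sig\cup\{0\}$ is the (unique) separating decomposition, so $\inn_{\sig}(\tau)=0$ and $\exte_{\sig}(\tau)=\tau$; hence $\tau=\exte_{\sig}(\tau)\in\exte_{\sig}(\sig')$.

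The only delicate step I expect is in part~a), where one must promote a single relative-interior point of $\tau$ to the statement that the \emph{whole} cone $\tau$ is a face of some $\ff(\sig')$-cone. This is resolved precisely by \ref{top10}, which identifies $\tau$ with the distinguished cone $\omega_{x,\sig'}$, after which the fan axiom $\omega_{x,\sig'}\cap\phi\fleq\phi$ finishes the job. Everything else is bookkeeping: in part~a) the substantive inputs are tight separability together with the simplification $\dd(\sig')=\emptyset$ afforded by equifulldimensionality in Theorem~\ref{top80}, while in part~b) the substantive input is that the quasipacking hypothesis enforces disjointness of $|\sig|\setminus 0$ and $\fr(|\sig'|)$.
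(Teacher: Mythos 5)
Your argument is correct and matches the paper's intended proof, which simply cites \ref{top70} and \ref{top80}: you supply exactly the details that citation leaves implicit (in a), the passage from $\exte_{\sig}(\sigma)\setminus 0\subseteq\fr(|\sig'|)=\bigcup\ff(\sig')$ to $\exte_{\sig}(\sigma)\fleq\phi$ via a relative-interior point $x$ with $\omega_{x,\sig'}=\exte_{\sig}(\sigma)$ and the fan axiom, and in b) the freeness over $\sig$ from the quasipacking condition together with uniqueness of the separating pair in \ref{pack20}). Your clause ``as soon as $\ff(\sig')\neq\emptyset$'' for the case $\exte_{\sig}(\sigma)=0$ is the right caveat rather than a gap: if $\ff(\sig')=\emptyset$ then $\sig'$ is complete (\ref{top60}), tight separability then forces $\exte_{\sig}(\sig')=\{0\}$ while $\fff(\sig')=\emptyset$, so the stated inclusion degenerates for the zero cone -- an edge case the paper's one-line proof glosses over and which never arises in its only application (\ref{8.20}), where $\sigquer$ is assumed noncomplete.
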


\begin{proof}
This follows immediately from \ref{top70} and \ref{top80}.
\end{proof}


\section{Techniques for constructing extensions}\label{sec6}

\renewcommand{\thesubsection}{\Alph{subsection}}

\subsection{Constructing complete fans}\mbox{}\medskip

We give a general recipe for constructing a complete semifan from a given fan $\sig$ and some chosen additional data. However, this does not yield a completion of $\sig$ in general but induces under mild conditions a fan on $\cl(V\setminus|\sig|)$ that induces a subdivision of $\fff(\sig)$.\medskip

\begin{no}\label{comp10}
Let $H$ be a finite set of $W^*$-rational lines in $V^*$ and let $u=(u_L)_{L\in H}\in\prod_{L\in H}(L\setminus 0)$. The set of $W$-polycones $$\textstyle\{(\bigcap_{L\in U}u_L^{\vee})\cap(\bigcap_{L\in H\setminus U}-u_L^{\vee})\mid U\subseteq H\}$$ does not depend on $u$ but only on $H$; we denote it by $\Omega_H$ and set $\omegaq_H\dfgl\bigcup_{\sigma\in\Omega_H}\face(\sigma)$. This is a $W$-semifan. Indeed, it suffices to show that if $\sigma,\tau\in\Omega_H$ then $\sigma\cap\tau\fleq\sigma$. So, let $U,U'\subseteq H$. We set $\sigma\dfgl(\bigcap_{L\in U}u_L^{\vee})\cap(\bigcap_{L\in H\setminus U}-u_L^{\vee})$ and $\tau\dfgl(\bigcap_{L\in U'}u_L^{\vee})\cap(\bigcap_{L\in H\setminus U'}-u_L^{\vee})$. For $L\in U'\setminus U$ we set $\overline{u}_L\dfgl-u_L$ and for $L\in U\setminus U'$ we set $\overline{u}_L\dfgl u_L$. Let $(L_i)_{i=1}^r$ be a counting of $\overline{U}\dfgl(U'\setminus U)\cup(U\setminus U')$. If $k\in[0,r-1]$ then $\sigma\cap(\bigcap_{i=1}^k\overline{u}_{L_i}^{\perp})\subseteq\overline{u}_{L_{k+1}}^{\vee}$ and $\sigma\cap(\bigcap_{i=1}^{k+1}\overline{u}_{L_i}^{\perp})=\sigma\cap(\bigcap_{i=1}^k\overline{u}_{L_i}^{\perp})\cap\overline{u}_{L_{k+1}}^{\perp}$, hence inductively we get $\sigma\cap\tau=\sigma\cap(\bigcap_{L\in\overline{U}}\overline{u}_L^{\vee})\fleq\sigma$ as claimed.

With the above notations it is clear that if $x\in V$ and $U\dfgl\{L\in H\mid x\in u_L^{\vee}\}$ then $$\textstyle x\in(\bigcap_{L\in U}u_L^{\vee})\cap(\bigcap_{L\in H\setminus U}-u_L^{\vee})\in\Omega_H,$$ showing that $\omegaq_H$ is complete. So, every cone in $\omegaq_H$ is the intersection of a family in $\Omega_H$ (\ref{top100}~b)), hence $s(\omegaq_H)=\bigcap\Omega_H=\bigcap_{L\in H}u_L^{\perp}$. Thus, $\omegaq_H$ is a $W$-fan if and only if $\bigcap_{L\in H}L^{\perp}=0$.
\end{no}

\begin{no}\label{comp20}
Let $\sig$ be a $W$-fan. We define a \textit{$W$-separating family for $\sig$} to be a family $H=(H_{\sigma,\tau})_{(\sigma,\tau)\in\ff(\sig)^2}$ of $W$-hyperplanes such that $H_{\sigma,\tau}$ separates $\sigma$ and $\tau$ in their intersection for all $\sigma,\tau\in\ff(\sig)$. Such a family exists (\ref{sep}), and if $\sigma\in\ff(\sig)$ then $H_{\sigma,\sigma}=\langle\sigma\rangle$. Moreover, the set $H'\dfgl\{H_{\sigma,\tau}^{\perp}\mid\sigma,\tau\in\ff(\sig)\}$ is a finite set of $W^*$-rational lines in $V^*$. The $W$-semifan $\omegaq_{H'}$ (\ref{comp10}) is denoted by $\omegaq_{\sig,H}$ and called \textit{the complete $W$-semifan associated with $\sig$ and $H$.}
\end{no}

\begin{lemma}\label{comp25}
Let $\sig$ be a complete $W$-semifan and let $X\subseteq V$ such that $\cl(\inn(X))=X$. If every cone in $\sig$ is contained in $X$ or in $\cl(V\setminus X)$, then $X=\bigcup\{\sigma\in\sig\mid\sigma\subseteq X\}$, $$\cl(V\setminus X)=\bigcup\{\sigma\in\sig\mid\sigma\subseteq\cl(V\setminus X)\},\,\text{ and }\,\,\fr(X)=\bigcup\{\sigma\in\sig\mid\sigma\subseteq\fr(X)\}.$$
\end{lemma}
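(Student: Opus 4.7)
The plan is to exploit the observation that any cone of $\sig$ meeting $\inn(X)$ is automatically contained in $X$, combined with a pigeonhole argument using finiteness of $\sig$ and the density hypothesis $\cl(\inn(X))=X$. Note first that $X$ is closed (being the closure of anything), that $\inn(X)\cap\cl(V\setminus X)=\emptyset$, and hence that any $\sigma\in\sig$ which meets $\inn(X)$ fails to be contained in $\cl(V\setminus X)$; by the dichotomy in the hypothesis, such a $\sigma$ must lie in $X$.

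For the first equality the inclusion ``$\supseteq$'' is trivial. Given $x\in X$, use $\cl(\inn(X))=X$ to pick a sequence $(y_m)_{m\in\Nn}$ in $\inn(X)$ with $y_m\to x$. The minimal cone $\omega_{y_m}$ of \ref{top10} contains $y_m$, meets $\inn(X)$, and is therefore contained in $X$ by the opening observation. Finiteness of $\sig$ forces some $\sigma\in\sig$ with $\sigma\subseteq X$ to contain infinitely many $y_m$, and closedness of $\sigma$ gives $x\in\sigma$, proving the equality. For the second equality I would apply this very same argument to $Y\dfgl\cl(V\setminus X)$ in place of $X$, once the hypothesis is checked to survive: since $X$ is closed, $\inn(Y)=V\setminus X$ is open, so $\cl(\inn(Y))=\cl(V\setminus X)=Y$; moreover $\cl(V\setminus Y)=\cl(\inn(X))=X$, so the dichotomy ``every cone lies in $X$ or in $\cl(V\setminus X)$'' is exactly the dichotomy ``every cone lies in $Y$ or in $\cl(V\setminus Y)$''.

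For the third equality the inclusion ``$\supseteq$'' is again trivial, while for ``$\subseteq$'' I would use $\fr(X)=X\cap\cl(V\setminus X)$ (valid because $X$ is closed). Any $x\in\fr(X)$ lies in some $\sigma_1\in\sig$ with $\sigma_1\subseteq X$ and in some $\sigma_2\in\sig$ with $\sigma_2\subseteq\cl(V\setminus X)$, by the two equalities already proved. The semifan axiom produces $\sigma_1\cap\sigma_2\in\sig$, and this intersection contains $x$ and sits inside $X\cap\cl(V\setminus X)=\fr(X)$. The only nontrivial step is really the first one; once the approximation-plus-pigeonhole trick is in place, the remaining two equalities are just bookkeeping exploiting the symmetry $X\leftrightarrow\cl(V\setminus X)$ and the intersection-closedness of semifans.
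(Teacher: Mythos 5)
Your argument is correct and supplies exactly the routine verification that the paper dismisses with ``Straightforward'': completeness of $\sig$ puts each approximating point of $\inn(X)$ into a closed cone meeting $\inn(X)$, the dichotomy forces that cone into $X$, finiteness of $\sig$ plus closedness of cones capture the limit point, and the remaining two equalities follow from the symmetry $X\leftrightarrow\cl(V\setminus X)$ together with the semifan axiom $\sigma_1\cap\sigma_2\in\sig$. One minor remark: the equality $\inn(\cl(V\setminus X))=V\setminus X$ requires $X$ to be regular closed (which it is, by the hypothesis $\cl(\inn(X))=X$) and not merely closed as you state, but the fact you actually need, namely $\cl(\inn(Y))=Y$ for $Y=\cl(V\setminus X)$, already follows from $V\setminus X$ being open, so the argument is unaffected.
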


\begin{proof}
Straightforward.
\end{proof}

\begin{proposition}\label{comp30}
Let $\sig$ be a noncomplete, equifulldimensional $W$-fan in $V$, let $H$ be a $W$-separating family for $\sig$, and let $\Tau\dfgl\{\sigma\in\omegaq_{\sig,H}\mid\sigma\subseteq\cl(V\setminus|\sig|)\}$ and $\Tau'\dfgl\{\sigma\in\omegaq_{\sig,H}\mid\sigma\subseteq\fr(|\sig|)\}$. Then, $\omegaq_{\sig,H}$ is a $W$-fan, $\Tau$ is a $W$-subfan of $\omegaq_{\sig,H}$ with $|\Tau|=\cl(V\setminus|\sig|)$ and $|\sig|\cup|\Tau|=V$, and $\Tau'$ is a $W$-subfan of $\Tau$ and a $W$-subdivision of $\fff(\sig)$.
\end{proposition}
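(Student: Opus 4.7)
\noindent
The proposition bundles four assertions: (i) sharpness of $\omegaq_{\sig,H}$, (ii) $\Tau$ is a $W$-subfan with $|\Tau|=\cl(V\setminus|\sig|)$, (iii) $|\sig|\cup|\Tau|=V$, and (iv) $\Tau'$ is a $W$-subfan of $\Tau$ subdividing $\fff(\sig)$. My plan is: first establish a cone-dichotomy for $\omegaq_{\sig,H}$, from which (ii) and (iii) follow by \ref{comp25}; then analyze the local structure at the facet hyperplanes to get (iv); finally address (i) via \ref{comp10}. The main obstacle will be (i).

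The dichotomy to prove is: every max cone $\omega\in(\omegaq_{\sig,H})_n$ lies in $|\sig|$ or in $\cl(V\setminus|\sig|)$. By construction, $\inn(\omega)$ is convex and strictly avoids every $H_{\sigma,\sigma}=\langle\sigma\rangle$ for $\sigma\in\ff(\sig)$. If $\inn(\omega)\cap\inn(|\sig|)\neq\emptyset$, any segment in $\inn(\omega)$ from such a point to any $y\in\inn(\omega)$ cannot exit $|\sig|$, for the exit would lie in $\fr(|\sig|)=\bigcup\ff(\sig)\subseteq\bigcup_\sigma\langle\sigma\rangle$ (\ref{top70}, using equifulldimensionality), contradicting $\inn(\omega)\cap\langle\sigma\rangle=\emptyset$; hence $\omega\subseteq|\sig|$. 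Otherwise $\inn(\omega)\subseteq V\setminus\inn(|\sig|)=\cl(V\setminus|\sig|)$, so $\omega\subseteq\cl(V\setminus|\sig|)$. Every cone of $\omegaq_{\sig,H}$ is a face of some $n$-dimensional one by \ref{top100}~b), so the dichotomy propagates. Then $\Tau$ is closed under faces and intersections; applying \ref{comp25} with $X=|\sig|$ (the hypothesis $\cl(\inn(|\sig|))=|\sig|$ coming from equifulldimensionality via \ref{top70}) yields $|\Tau|=\cl(V\setminus|\sig|)$, and $|\sig|\cup|\Tau|=|\omegaq_{\sig,H}|=V$.

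For (iv), $\Tau'\subseteq\Tau$ because $\fr(|\sig|)\subseteq\cl(V\setminus|\sig|)$. The support identity $|\Tau'|=\fr(|\sig|)=|\fff(\sig)|$ (last equality by \ref{top70}) reduces to $\fr(|\sig|)\subseteq|\Tau'|$: given $x\in\sigma\in\ff(\sig)$, the cone $\omega\dfgl\omega_{x,\omegaq_{\sig,H}}$ is a face of some max cone in $|\sig|$ (since $\sigma\fleq\tau$ for a unique $\tau\in\sig_n$, points in $\inn(\tau)$ near $x$ supply $|\sig|$-points) and of some max cone in $\cl(V\setminus|\sig|)$ (on the opposite side of $\langle\sigma\rangle$ near $x$ no $\sig_n$-cone reaches, since $\sigma\in\ff(\sig)$ leaves $\tau$ the unique top-dimensional neighbour), so $\omega\subseteq|\sig|\cap\cl(V\setminus|\sig|)=\fr(|\sig|)$ and thus $\omega\in\Tau'$. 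For the containment in a $\fff(\sig)$-cone: given $\omega\in\Tau'$ and $x\in\inn_{\langle\omega\rangle}(\omega)$, set $\sigma_*\dfgl\omega_{x,\sig}\subseteq\fr(|\sig|)$ (by \ref{top20}); a relative-interior argument shows $\sigma_*$ is a face of some $\sigma\in\ff(\sig)$; then $x\in H_{\sigma,\sigma}$ together with $\omega$ lying on one side of this hyperplane forces $\omega\subseteq\langle\sigma\rangle$, and for any $\sigma_y\in\ff(\sig)$ meeting $\inn_{\langle\omega\rangle}(\omega)$ the separating-in-intersection property gives $\omega\cap\sigma_y\subseteq H_{\sigma,\sigma_y}\cap\sigma_y=\sigma\cap\sigma_y\subseteq\sigma$, whence $\omega\subseteq\sigma$.

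Finally, for (i), \ref{comp10} reduces sharpness of $\omegaq_{\sig,H}$ to $\bigcap_{\sigma,\tau\in\ff(\sig)}H_{\sigma,\tau}=0$. Here $\ff(\sig)\neq\emptyset$ by \ref{top60} applied to the noncomplete equifulldimensional $\sig$, and equifulldimensionality ensures each $\langle\sigma\rangle=H_{\sigma,\sigma}$ is a genuine hyperplane. The separating-in-intersection property forces $H_{\sigma,\tau}\neq\langle\sigma\rangle$ whenever two distinct elements of $\ff(\sig)$ share a linear span (so each such coincidence shaves a dimension off the intersection), while pairs with distinct spans contribute a codimension-$\geq 2$ reduction automatically. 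The hard step, and the main obstacle, is to package these local codimension drops—using noncompleteness to guarantee enough diversity among the facet geometries—into the global conclusion that no nonzero direction survives in every $H_{\sigma,\tau}$; this is where the bulk of the technical work in the proof will sit.
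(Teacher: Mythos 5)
Your treatment of the dichotomy, of the support identities via \ref{comp25}, and of the subdivision statement is essentially sound and close in spirit to the paper's argument (the paper gets the dichotomy by the same kind of segment argument, and handles ``every $\tau\in\Tau'$ lies in some $\sigma\in\ff(\sig)$'' more quickly: otherwise there are $x\in\sigma\setminus\sigma'$, $y\in\sigma'\setminus\sigma$ in $\tau$, which $H_{\sigma,\sigma'}$ separates strictly, contradicting that $\tau$ lies on one side of it; your variant also needs a small case distinction you gloss over, namely that $\omega\cap\sigma_y\subseteq H_{\sigma,\sigma_y}$ is clear only when $\omega$ lies on the $\sigma$-side of $H_{\sigma,\sigma_y}$, while on the $\sigma_y$-side you must reuse the relative-interior point $x\in\sigma$ to force $\omega\subseteq H_{\sigma,\sigma_y}$). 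The genuine gap is assertion (i): you reduce sharpness of $\omegaq_{\sig,H}$ to $\bigcap_{\sigma,\tau\in\ff(\sig)}H_{\sigma,\tau}=0$ and then explicitly leave this unproved, calling it ``the main obstacle''. The codimension-counting strategy you sketch is not a viable route as stated: the diagonal terms alone can have a nonzero common subspace (for two full cones in $\R^3$ glued along a facet, the spans of all four frontier facets contain a common line), and ``pairs with distinct spans shave off codimension $\geq 2$'' does not aggregate into the global conclusion without a global topological input -- which is precisely the frontier information you have already established in (ii)--(iv).

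In fact (i) is free once your other parts are in place, and this is how the paper's proof is organized (it reduces the whole proposition, including the fan property, to the dichotomy and the statement about $\Tau'$). By \ref{comp10}, $s(\omegaq_{\sig,H})=\bigcap_{\sigma,\tau\in\ff(\sig)}H_{\sigma,\tau}$ is the smallest cone of the complete $W$-semifan $\omegaq_{\sig,H}$, hence contained in every cone of $\omegaq_{\sig,H}$. Since $\sig$ is noncomplete and equifulldimensional, $\ff(\sig)\neq\emptyset$ (\ref{top60}) and $\fr(|\sig|)=\bigcup\ff(\sig)\neq\emptyset$ (\ref{top70}); by your dichotomy and \ref{comp25} it equals $|\Tau'|$, so $\Tau'\neq\emptyset$, and by your part (iv) any $\tau\in\Tau'$ is contained in some sharp cone of $\fff(\sig)\subseteq\sig$. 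Thus the linear subspace $s(\omegaq_{\sig,H})$ lies in a sharp cone and is $0$, so $\omegaq_{\sig,H}$ is a $W$-fan. Had you added these three lines instead of the codimension heuristics, the proposal would be complete; as written, the one claim you single out as carrying ``the bulk of the technical work'' is exactly the one left without proof.
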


\begin{proof}
As $\fff(\sig)$ is a nonempty $W$-fan it suffices to show that every cone in $\omegaq_{\sig,H}$ is contained in $|\sig|$ or in $\cl(V\setminus|\sig|)$, and that every element of $\Tau'$ is contained in a cone in $\fff(\sig)$ (\ref{comp25}, \ref{top70}). Let $\sigma\in\omegaq_{\sig,H}$ with $\dim(\sigma)=n$ and assume $\sigma\not\subseteq\cl(V\setminus|\sig|)$ and $\sigma\not\subseteq|\sig|$, so that $\sigma$ meets $\inn(|\sig|)$ and $V\setminus|\sig|$. There are $x\in\inn(\sigma)\cap\inn(|\sig|)$ and $y\in\inn(\sigma)\setminus|\sig|$ (\ref{topprop}~a)), hence $z\in\,\strecker x,y\streckel\,\cap\fr(|\sig|)$ with $z\in\inn(\sigma)$ and $\tau\in\ff(\sig)$ with $z\in\tau$ (\ref{top70}). But $\sigma$ lies on one side of $\langle\tau\rangle$ by construction of $\omegaq_{\sig,H}$, hence $\inn(\sigma)$ and in particular $z$ lie strictly on one side of $\langle\tau\rangle$. This contradicts $z\in\tau$, and therefore $\sigma\subseteq|\sig|$ or $\sigma\subseteq\cl(V\setminus|\sig|)$ for every $\sigma\in\omegaq_{\sig,H}$.

Next, let $\tau\in\Tau'$ and assume $\tau\not\subseteq\sigma$ for every $\sigma\in\ff(\sig)$, so that there are $\sigma,\sigma'\in\ff(\sig)$ and $x,y\in\tau$ with $x\in\sigma\setminus\sigma'$ and $y\in\sigma'\setminus\sigma$. Then, $H_{\sigma,\sigma'}$ separates $x$ and $y$ strictly, contradicting that $\tau$ lies on one side of $H_{\sigma,\sigma'}$. Therefore, $\tau$ is contained in a cone in $\fff(\sig)$, and thus the claim is proven.
\end{proof}


\subsection{Adjusting extensions}\mbox{}\medskip

\textit{Throughout this subsection let $\sig\subseteq\sig'$ be a separable $W$-extension of fans, let $\Tau\subseteq\sig'$ be a $W$-subfan, and let $\Tau'$ be a $W$-subdivision of $\Tau$.}\medskip

We will later (in a more special situation) face the problem of ``adjusting'' $\sig'$ such that $\Tau$ is turned into $\Tau'$ but $\sig$ remains unchanged. To allow a solution to this, $\Tau$ has to be ``independent of $\sig$'' in some way, and hence it is not astonishing that separability is a key property in the construction of adjustments.\medskip

\begin{no}\label{adj10}
If $\sigma\in\sig'$ and $\tau\in\Tau'$ then the sum $\inn_{\sig}(\sigma)+(\exte_{\sig}(\sigma)\cap\tau)$ is direct. Thus, $$\adj_{\sig}(\sig',\Tau')\dfgl\{\inn_{\sig}(\sigma)\oplus(\exte_{\sig}(\sigma)\cap\tau)\mid\sigma\in\sig'\wedge\tau\in\Tau'\}$$ is a finite set of sharp $W$-polycones, called \textit{the adjustment of $\sig'$ to $\Tau'$ over $\sig$.} If $\sigma\in\sig'$ and $\tau\in\Tau'$ then $\sigma\cap\tau\fleq\tau$, hence $\adj_{\sig}(\sig',\Tau')=\{\inn_{\sig}(\sigma)\oplus\tau\mid\sigma\in\sig'\wedge\tau\in\Tau'\wedge\tau\subseteq\exte_{\sig}(\sigma)\}$.
\end{no}

\begin{lemma}\label{inexadj}
{\rm a)} If $\sigma\in\sig'$ and $\tau\in\Tau'$ then $$\face(\inn_{\sig}(\sigma)\oplus(\exte_{\sig}(\sigma)\cap\tau))=\{\inn_{\sig}(\sigma')\oplus(\exte_{\sig}(\sigma')\cap\tau')\mid\sigma'\fleq\sigma\wedge\tau'\fleq\tau\}.$$

{\rm b)} If $\sigma,\sigma'\in\sig'$, $\tau,\tau'\in\Tau'$, $\eta\dfgl\sigma\cap\sigma'$ and $\zeta\dfgl\tau\cap\tau'$ then $$(\inn_{\sig}(\sigma)\oplus(\exte_{\sig}(\sigma)\cap\tau))\cap(\inn_{\sig}(\sigma')\oplus(\exte_{\sig}(\sigma')\cap\tau'))=\inn_{\sig}(\eta)\oplus(\exte_{\sig}(\eta)\cap\zeta).$$

{\rm c)} $\adj_{\sig}(\sig',\Tau')$ is a $W$-fan.
\end{lemma}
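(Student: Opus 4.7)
The plan is to exploit the direct-sum classification of faces in Proposition~\ref{dec20}(a) together with the separability calculus recorded in \ref{pack20}, reducing everything to routine checks within the splitting $\sigma = \inn_\sig(\sigma) \oplus \exte_\sig(\sigma)$ and its analogues.

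For part~(a), I would first apply \ref{dec20}(a) to see that the faces of $\inn_\sig(\sigma) \oplus (\exte_\sig(\sigma) \cap \tau)$ are exactly the direct sums $\alpha \oplus \beta$ with $\alpha \fleq \inn_\sig(\sigma)$ and $\beta \fleq \exte_\sig(\sigma) \cap \tau$. The $\supseteq$ direction then follows from \ref{pack20}, which yields $\inn_\sig(\sigma') = \sigma' \cap \inn_\sig(\sigma) \fleq \inn_\sig(\sigma)$ and $\exte_\sig(\sigma') = \sigma' \cap \exte_\sig(\sigma) \fleq \exte_\sig(\sigma)$, combined with the following linear-functional trick: if $u \in \exte_\sig(\sigma)^\vee$ cuts out the face $\exte_\sig(\sigma')$ and $v \in \tau^\vee$ cuts out $\tau'$, then $u+v$ cuts out $\exte_\sig(\sigma') \cap \tau'$ as a face of $\exte_\sig(\sigma) \cap \tau$. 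For the converse, given a face $\alpha \oplus \beta$ of the left-hand cone, I would take as witnesses $\sigma' := \alpha \oplus \exte_\sig(\sigma)$ and $\tau' := \beta$; then $\sigma' \fleq \sigma$ by \ref{dec20}(a), whence $\inn_\sig(\sigma') = \alpha$, $\exte_\sig(\sigma') = \exte_\sig(\sigma)$, and $\exte_\sig(\sigma') \cap \tau' = \exte_\sig(\sigma) \cap \beta = \beta$. The key step is verifying $\tau' = \beta \fleq \tau$, for which I would invoke the transitive chain $\beta \fleq \exte_\sig(\sigma) \cap \tau \fleq \sigma \cap \tau \fleq \tau$, where the middle step uses $\exte_\sig(\sigma) \fleq \sigma$ and the last is the basic fact $\sigma \cap \tau \fleq \tau$ already recorded in \ref{adj10}.

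For part~(b), the inclusion $\supseteq$ is immediate because $\inn_\sig(\eta) \subseteq \inn_\sig(\sigma)$ and $\exte_\sig(\eta) \cap \zeta \subseteq \exte_\sig(\sigma) \cap \tau$ (and symmetrically for the primed cone). For $\subseteq$, I would take $x$ lying in both cones and write $x = a + b = a' + b'$ in the two direct-sum decompositions. Then $x \in \sigma \cap \sigma' = \eta$; since $\eta$ is a face of both $\sigma$ and $\sigma'$, \ref{pack20} tells us that $\eta$ is separable over $\sig$ with $\inn_\sig(\eta) = \inn_\sig(\sigma) \cap \inn_\sig(\sigma')$ and $\exte_\sig(\eta) = \exte_\sig(\sigma) \cap \exte_\sig(\sigma')$. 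Writing the canonical decomposition of $x$ in $\eta = \inn_\sig(\eta) \oplus \exte_\sig(\eta)$ and invoking uniqueness of the decomposition inside the larger direct sum $\sigma = \inn_\sig(\sigma) \oplus \exte_\sig(\sigma)$ (and similarly for $\sigma'$) forces $a = a' \in \inn_\sig(\eta)$ and $b = b' \in \exte_\sig(\eta)$; since $b \in \tau$ and $b = b' \in \tau'$, we obtain $b \in \exte_\sig(\eta) \cap \zeta$, as required.

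For part~(c), the elements of $\adj_\sig(\sig',\Tau')$ are sharp $W$-polycones by \ref{adj10}, closure under faces is precisely (a), and the intersection axiom follows by combining (a) and (b): part~(b) identifies the intersection of two adjustment cones as $\inn_\sig(\eta) \oplus (\exte_\sig(\eta) \cap \zeta)$ with $\eta = \sigma \cap \sigma' \in \sig'$ and $\zeta = \tau \cap \tau' \in \Tau'$, so this intersection lies in $\adj_\sig(\sig',\Tau')$, and part~(a) applied to $\eta \fleq \sigma$, $\zeta \fleq \tau$ (and symmetrically) shows it is a face of both. The principal obstacle is the $\subseteq$ direction of~(a), where one must exhibit the correct witness pair $(\sigma',\tau') = (\alpha \oplus \exte_\sig(\sigma), \beta)$ and use the transitivity of the face relation together with $\sigma \cap \tau \fleq \tau$; everything else is direct-sum bookkeeping on top of the separability machinery already in place.
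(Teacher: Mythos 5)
Your proof is correct and follows essentially the same route as the paper: part (a) is exactly the combination of \ref{dec20} and \ref{pack20} that the paper invokes (your witness pair $(\alpha\oplus\exte_{\sig}(\sigma),\beta)$ and the sum-of-functionals face argument are just the details behind that citation), part (b) reproduces the paper's uniqueness-of-direct-sum-decomposition argument, and part (c) is the same immediate consequence of (a) and (b). No gaps; nothing further is needed.
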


\begin{proof}
a) follows from \ref{dec20} and \ref{pack20}. b) Let $\omega\dfgl\inn_{\sig}(\sigma)\oplus(\exte_{\sig}(\sigma)\cap\tau)$, $\omega'\dfgl\inn_{\sig}(\sigma')\oplus(\exte_{\sig}(\sigma')\cap\tau')$ and $\thet\dfgl\inn_{\sig}(\eta)\oplus(\exte_{\sig}(\eta)\cap\zeta)$. Keeping in mind \ref{pack20} we get $$\thet=(\inn_{\sig}(\sigma)\cap\inn_{\sig}(\sigma'))\oplus(\exte_{\sig}(\sigma)\cap\exte_{\sig}(\sigma')\cap\zeta)\subseteq\omega\cap\omega'\subseteq\eta=\inn_{\sig}(\eta)\oplus\exte_{\sig}(\eta).$$ Conversely, let $x\in\omega\cap\omega'$. By the above there are $y\in\inn_{\sig}(\eta)$, $y'\in\exte_{\sig}(\eta)$, $z\in\inn_{\sig}(\sigma)$, $z'\in\exte_{\sig}(\sigma)\cap\tau$, $w\in\inn_{\sig}(\sigma')$ and $w'\in\exte_{\sig}(\sigma')\cap\tau'$ with $x=y+y'=z+z'=w+w'$. It follows $y-z=z'-y'\in\langle\inn_{\sig}(\sigma)\rangle\cap\langle\exte_{\sig}(\sigma)\rangle=0$ and $y-w=w'-y'\in\langle\inn_{\sig}(\sigma')\rangle\cap\langle\exte_{\sig}(\sigma')\rangle=0$, hence $y=z=w\in\inn_{\sig}(\eta)$ and $y'=z'=w'\in\exte_{\sig}(\eta)\cap\zeta$, and therefore $x\in\thet$. This proves b), and c) follows immediately.
\end{proof}

\begin{proposition}\label{adjthm1}
Suppose that $\exte_{\sig}(\sig')=\Tau$.

{\rm a)} $\adj_{\sig}(\sig',\Tau')$ is a $W$-subdivision of $\sig'$, and the $W$-subdivisions of the $W$-subfans $\sig\subseteq\sig'$ and $\Tau\subseteq\sig'$ induced by $\adj_{\sig}(\sig',\Tau)$ are $\sig$ and $\Tau'$, respectively.

{\rm b)} $\adj_{\sig}(\sig',\Tau')$ is a separable $W$-extension of $\sig$, and if $\sigma\in\sig'$ and $\tau\in\Tau'$ then $$\inn_{\sig}(\inn_{\sig}(\sigma)\oplus(\exte_{\sig}(\sigma)\cap\tau))=\inn_{\sig}(\sigma)\,\text{ and }\,\exte_{\sig}(\inn_{\sig}(\sigma)\oplus(\exte_{\sig}(\sigma)\cap\tau))=\exte_{\sig}(\sigma)\cap\tau.$$
\end{proposition}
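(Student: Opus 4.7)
I would dispatch~(b) first via the uniqueness in~\ref{pack20}, and then use~(b) together with the hypothesis $\exte_{\sig}(\sig')=\Tau$ to establish~(a). For~(b), fix $\sigma\in\sig'$ and $\tau\in\Tau'$ and put $\omega\dfgl\inn_{\sig}(\sigma)\oplus(\exte_{\sig}(\sigma)\cap\tau)$; I plan to apply the uniqueness part of~\ref{pack20} to the pair $(\inn_{\sig}(\sigma),\exte_{\sig}(\sigma)\cap\tau)$. The first component lies in $\sig\cup\{0\}$ by construction, and the second is a subcone of the sharp cone $\exte_{\sig}(\sigma)$ which is free over $\sig$, hence itself sharp and free over $\sig$. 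The direct sum in the definition of $\omega$ is indeed direct since it is a subsum of $\inn_{\sig}(\sigma)\oplus\exte_{\sig}(\sigma)$. Uniqueness then yields both stated formulas and separability of $\adj_{\sig}(\sig',\Tau')$ over $\sig$.

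For the subdivision claim in~(a), note that $\adj_{\sig}(\sig',\Tau')$ is already a $W$-fan by~\ref{inexadj}~c), and each adjustment cone sits inside the corresponding $\sigma\in\sig'$. To cover $\sigma$ I would use that $\exte_{\sig}(\sigma)\in\Tau$ (by hypothesis) together with $|\Tau'|=|\Tau|$: any $x=x_1+x_2\in\sigma$ with $x_1\in\inn_{\sig}(\sigma)$ and $x_2\in\exte_{\sig}(\sigma)$ has $x_2\in\tau$ for some $\tau\in\Tau'$, whence $x\in\inn_{\sig}(\sigma)\oplus(\exte_{\sig}(\sigma)\cap\tau)$. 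The easy halves of the induced-subdivision claims are then immediate: every $\rho\in\sig$ realises itself as an adjustment cone (take $\sigma=\rho$, for which $\inn_{\sig}(\rho)=\rho$ and $\exte_{\sig}(\rho)=0$), and every $\tau'\in\Tau'$ contained in some $\rho\in\Tau$ realises itself as $\omega=\rho\cap\tau'=\tau'$ via $\sigma=\rho\in\Tau\subseteq\sig'$ and $\tau=\tau'$ (using $\inn_{\sig}(\rho)=0$ and $\exte_{\sig}(\rho)=\rho$ for $\rho\in\Tau=\exte_{\sig}(\sig')$, since $\rho$ is sharp and free over $\sig$).

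The reverse inclusions are where I expect the real work. If $\omega\subseteq\rho\in\sig$, freeness of $\exte_{\sig}(\sigma)\cap\tau$ over $\sig$ forces $\exte_{\sig}(\sigma)\cap\tau\subseteq\exte_{\sig}(\sigma)\cap\rho=0$, so $\omega=\inn_{\sig}(\sigma)\in\sig\cup\{0\}$. If $\omega\subseteq\rho\in\Tau=\exte_{\sig}(\sig')$, then $\rho$ is free over $\sig$ and contains $\inn_{\sig}(\sigma)\in\sig\cup\{0\}$, forcing $\inn_{\sig}(\sigma)=0$; hence $\sigma=\exte_{\sig}(\sigma)\in\Tau$ and $\omega=\sigma\cap\tau$ with $\sigma\in\Tau$ and $\tau\in\Tau'$. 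The main obstacle is then to show $\sigma\cap\tau\in\Tau'$: I would pick $\rho'\in\Tau$ with $\tau\subseteq\rho'$, write $\sigma\cap\rho'=\rho'\cap u^{\perp}$ for some $u\in(\rho')^{\vee}$ (available since $\sigma\cap\rho'\fleq\rho'$ as both lie in~$\Tau$), and observe that $\sigma\cap\tau=(\sigma\cap\rho')\cap\tau=\tau\cap u^{\perp}$ is a face of $\tau\in\Tau'$, as $\tau\subseteq\rho'$ gives $u\in\tau^{\vee}$; hence $\sigma\cap\tau\in\Tau'$ by closedness of $\Tau'$ under faces.
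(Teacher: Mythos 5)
Your argument is correct and is essentially the filled-in version of the paper's proof, which simply invokes Lemma \ref{inexadj} (fan property of the adjustment) together with the uniqueness statement in \ref{pack20}: you verify the separability formulas by uniqueness, the covering of $|\sig'|$ via $\exte_{\sig}(\sigma)\in\Tau$ and $|\Tau|\subseteq|\Tau'|$, and the induced-subdivision identities directly, including the only mildly nontrivial point that a cone of $\adj_{\sig}(\sig',\Tau')$ contained in a cone of $\Tau$ is of the form $\sigma\cap\tau$ with $\sigma\in\Tau$, $\tau\in\Tau'$, and hence a face of $\tau$, so lies in $\Tau'$. No gaps beyond trivial degenerate cases (e.g.\ $\inn_{\sig}(\sigma)=0$ lies in $\sig$ because $0\in\sig$ when $\sig\neq\emptyset$).
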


\begin{proof}
Easy on use of \ref{inexadj}.
\end{proof}

\begin{proposition}\label{adjthm2}
Suppose that $\exte_{\sig}(\sig')=\Tau$.

{\rm a)} $\adj_{\sig}(\sig',\Tau')$ is relatively simplicial over $\sig$ if and only if $\Tau'$ is simplicial.

{\rm b)} $\adj_{\sig}(\sig',\Tau')$ is tightly separable over $\sig$ if and only if $\sig'$ is so.

{\rm c)} $\adj_{\sig}(\sig',\Tau')$ is a $W$-quasipacking of $\sig$ if and only if $\sig'$ is so.

{\rm d)} $\adj_{\sig}(\sig',\Tau')$ is a $W$-packing of $\sig$ if and only if $\sig'$ is so and $\Tau'$ is simplicial.
\end{proposition}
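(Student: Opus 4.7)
Plan: All four parts rest on two structural facts already in place from \ref{adjthm1} and Lemma \ref{inexadj}: first, $\adj_{\sig}(\sig',\Tau')$ is a $W$-subdivision of $\sig'$, so their supports agree; second, for a cone $\omega=\inn_{\sig}(\sigma)\oplus(\exte_{\sig}(\sigma)\cap\tau)$ in $\adj_{\sig}(\sig',\Tau')$ one has the explicit formulas $\inn_{\sig}(\omega)=\inn_{\sig}(\sigma)$ and $\exte_{\sig}(\omega)=\exte_{\sig}(\sigma)\cap\tau$.

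For part (a), observe first that a cone separable over $\sig$ is free over $\sig$ precisely when its inner part vanishes, so the cones of $\adj_{\sig}(\sig',\Tau')$ that are free over $\sig$ are exactly those of the form $\exte_{\sig}(\sigma)\cap\tau$. The key claim is that this collection coincides with $\Tau'$ itself. On one hand, every $\tau\in\Tau'$ sits inside some $\tau_0\in\Tau$; since $\tau_0\in\exte_{\sig}(\sig')$ it is sharp and free over $\sig$, so $\inn_{\sig}(\tau_0)=0$ and $\exte_{\sig}(\tau_0)=\tau_0$, whence $\tau=\exte_{\sig}(\tau_0)\cap\tau$. On the other hand, for any $\sigma\in\sig'$ and $\tau\in\Tau'$ with $\tau\subseteq\tau_0\in\Tau$, the cone $\eta\dfgl\exte_{\sig}(\sigma)\cap\tau_0$ is a face of $\tau_0$ (as both lie in the fan $\Tau$), cut out by some $u\in\tau_0^{\vee}$; since $\tau\subseteq\tau_0$ we have $u\in\tau^{\vee}$, and $\exte_{\sig}(\sigma)\cap\tau=\tau\cap u^{\perp}$ is a face of $\tau$, hence lies in $\Tau'$. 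Since $\adj_{\sig}(\sig',\Tau')$ is relatively simplicial over $\sig$ exactly when all its free cones are simplicial, this identifies the condition with simpliciality of $\Tau'$.

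Parts (b) and (c) follow from the support identity $|\adj_{\sig}(\sig',\Tau')|=|\sig'|$ combined with \ref{adjthm1}. Separability of $\adj_{\sig}(\sig',\Tau')$ over $\sig$ is already established, so tight separability reduces to the containment $\exte_{\sig}(\cdot)\setminus 0\subseteq\fr(|\cdot|)$. The implication from $\sig'$ to $\adj_{\sig}(\sig',\Tau')$ is immediate since $\exte_{\sig}(\omega)=\exte_{\sig}(\sigma)\cap\tau\subseteq\exte_{\sig}(\sigma)$. The converse uses the subdivision: for $\sigma\in\sig'$ we have $\exte_{\sig}(\sigma)\in\Tau$, and every point of $\exte_{\sig}(\sigma)$ lies in some $\tau\in\Tau'$ contained in $\exte_{\sig}(\sigma)$ (by applying the facial argument of (a) to that $\tau$); each such $\tau$ is a free cone of $\adj_{\sig}(\sig',\Tau')$ with $\exte_{\sig}(\tau)=\tau$, so tight separability of the adjustment transfers to $\sigma$ by union. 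The quasipacking condition $|\sig|\setminus 0\subseteq\inn(|\cdot|)$ depends only on the support and thus passes between $\adj_{\sig}(\sig',\Tau')$ and $\sig'$ unchanged.

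Part (d) combines (a)--(c) with the observation that the residual clause of the packing definition---``rays empty or equifulldimensional''---also transfers between $\adj_{\sig}(\sig',\Tau')$ and $\sig'$: the ray sets are empty simultaneously (as $\sig'_1=\emptyset$ forces $\sig'\subseteq\{0\}$ by \ref{topprop}~b), and similarly for the adjustment via $|\adj_{\sig}(\sig',\Tau')|=|\sig'|$), and equifulldimensionality depends only on the support by the characterisation $\cl(\inn(|\cdot|))=|\cdot|$ from \ref{top70}. I expect the main difficulty to lie in part (a), specifically the identification of the free cones of $\adj_{\sig}(\sig',\Tau')$ with $\Tau'$, which hinges on combining the fan property of $\Tau$ with the subdivision property of $\Tau\subseteq\Tau'$ to conclude that $\exte_{\sig}(\sigma)\cap\tau$ is always a face of $\tau\in\Tau'$. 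Once this is settled, the remaining arguments are routine bookkeeping on supports and on the formulas of \ref{adjthm1}.
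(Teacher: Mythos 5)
Your proposal takes the route the paper's one-line proof intends (reduce everything to \ref{adjthm1}, \ref{inexadj} and \ref{top70} via the formulas $\inn_{\sig}(\omega)=\inn_{\sig}(\sigma)$, $\exte_{\sig}(\omega)=\exte_{\sig}(\sigma)\cap\tau$ and the support identity $|\adj_{\sig}(\sig',\Tau')|=|\sig'|$), and your details for a)--c) are correct. In particular, your identification of the cones of $\adj_{\sig}(\sig',\Tau')$ that are free over $\sig$ with exactly the cones of $\Tau'$ (using that $\exte_{\sig}(\sigma)\cap\tau_0$ is a face of $\tau_0\in\Tau$ and hence $\exte_{\sig}(\sigma)\cap\tau$ a face of $\tau\in\Tau'$) is the right key step for a), and the same facial argument makes your converse in b) work.

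One caveat concerning d). What your combination of a)--c) with the transfer of the clause ``$(\cdot)_1=\emptyset$ or equifulldimensional'' actually proves is: $\adj_{\sig}(\sig',\Tau')$ is a $W$-packing of $\sig$ if and only if $\sig'$ is a tightly separable $W$-quasipacking of $\sig$ satisfying that clause and $\Tau'$ is simplicial. This yields the ``if'' direction of d) (the only direction used later, in \ref{8.20}), since a packing $\sig'$ satisfies all these conditions. For the ``only if'' direction as stated one must in addition show that $\sig'$ is relatively simplicial over $\sig$, i.e.\ (by the same computation of free cones, now applied to $\sig'$) that every cone of $\Tau=\exte_{\sig}(\sig')$ is simplicial; relative simpliciality of the adjustment only controls the cones of $\Tau'$, and cones of $\Tau$ need not survive into $\adj_{\sig}(\sig',\Tau')$. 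This clause does not follow from a)--c): for instance, with $n=4$, $\xi$ a ray, $\sig=\{0,\xi\}$, and $\sig'$ the fan consisting of $0$, $\xi$, the cones over the proper faces of a $3$-cube placed in an affine hyperplane with $\xi$ through its centre, and the sums of $\xi$ with these, one checks that $\sig'$ is an equifulldimensional, tightly separable $W$-quasipacking of $\sig$ whose free cones include the nonsimplicial cones over the square facets, while the adjustment to a simplicial subdivision $\Tau'$ of $\Tau$ is a $W$-packing of $\sig$ by a)--c). So the ``only if'' half of d) needs either an extra argument or a more cautious formulation; your write-up passes over this silently, but the defect lies as much in the statement as in your proof, and the half of d) that the paper actually uses is fully covered by what you wrote.
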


\begin{proof}
Easy on use of \ref{adjthm1} and \ref{top70}.
\end{proof}

\begin{proposition}\label{adjthm3}
Suppose that $\exte_{\sig}(\sig')=\Tau$, and let $\Omega$ be a $W$-extension of $\Tau'$ with $|\sig'|\cap|\Omega|=|\Tau|$. Then, $\sighut\dfgl\adj_{\sig}(\sig',\Tau')\cup\Omega$ is a $W$-extension of $\adj_{\sig}(\sig',\Tau')$ with $|\sighut|=|\sig'|\cup|\Omega|$.
\end{proposition}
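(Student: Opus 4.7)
The plan is to check that $\sighut$ is a $W$-fan (from which the $W$-extension property is immediate) and that its support is as claimed. Since $\adj_\sig(\sig',\Tau')$ is a $W$-fan by \ref{inexadj} and $\Omega$ is a $W$-fan, sharpness of every cone in $\sighut$ and the face-closure and intersection properties within each of the two collections are already given. The only nontrivial case is the intersection $\omega \cap \alpha$ for $\omega \in \adj_\sig(\sig',\Tau')$ and $\alpha \in \Omega$.

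To handle this, write $\omega = \inn_\sig(\sigma) \oplus \tau'$ with $\sigma \in \sig'$ and $\tau' \in \Tau'$ satisfying $\tau' \subseteq \exte_\sig(\sigma)$ (using the reformulation at the end of \ref{adj10}). Since $\Tau' \subseteq \Omega$ and $\tau' \fleq \omega$ (by \ref{dec25}, taking cases on whether the summands are zero), it will suffice to prove the intermediate claim $\omega \cap |\Omega| \subseteq \tau'$. Indeed, granting this, $\omega \cap \alpha = \tau' \cap \alpha$; as an intersection of two cones of the fan $\Omega$ this is a common face there, and by transitivity of $\fleq$ along $\tau' \cap \alpha \fleq \tau' \fleq \omega$ it is also a face of $\omega$. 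Moreover it lies in $\Omega \subseteq \sighut$.

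The intermediate claim is the main obstacle and is proved as follows. Given $x \in \omega \cap |\Omega|$, one has $x \in \omega \subseteq \sigma \subseteq |\sig'|$, hence $x \in |\sig'| \cap |\Omega| = |\Tau|$, so there is $\tau'' \in \Tau$ with $x \in \tau''$. Since $\Tau = \exte_\sig(\sig')$, the cone $\tau''$ is free over $\sig$, and therefore so is the face $\sigma \cap \tau'' \fleq \tau''$ (a face in $\sig'$). Applying \ref{dec20}~b,c to the direct sum $\sigma = \inn_\sig(\sigma) \oplus \exte_\sig(\sigma)$, the face $\sigma \cap \tau''$ decomposes into its intersections with the two summands; the first intersection is contained in $\inn_\sig(\sigma) \in \sig \cup \{0\}$ but also free over $\sig$, forcing it to be $0$. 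Hence $x \in \sigma \cap \tau'' \subseteq \exte_\sig(\sigma)$, and uniqueness of the decomposition $\omega = \inn_\sig(\sigma) \oplus \tau'$ then forces $x \in \tau'$, as needed.

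The support statement is immediate: by \ref{adjthm1} the fan $\adj_\sig(\sig',\Tau')$ is a $W$-subdivision of $\sig'$, so $|\adj_\sig(\sig',\Tau')| = |\sig'|$, and therefore $|\sighut| = |\adj_\sig(\sig',\Tau')| \cup |\Omega| = |\sig'| \cup |\Omega|$.
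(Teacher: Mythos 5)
Your proof is correct, and its overall shape matches the paper's: everything reduces to the mixed intersections $\omega\cap\alpha$ with $\omega\in\adj_{\sig}(\sig',\Tau')$ and $\alpha\in\Omega$, one shows that such an intersection lies in the $\Tau'$-summand of $\omega$, and then one concludes because intersections inside $\Omega$ are common faces there and the summand is a face of $\omega$ (\ref{dec25}). Where you genuinely differ is in how the containment is established: the paper uses $x\in|\adj_{\sig}(\sig',\Tau')|\cap|\Omega|=|\Tau'|$ to find $\rho\in\Tau'$ with $x\in\rho$, observes that $\rho$ is itself an adjusted cone (its ambient cone $\thet\in\Tau$ satisfies $\exte_{\sig}(\thet)=\thet$, $\inn_{\sig}(\thet)=0$), and then applies the intersection formula \ref{inexadj}~b) inside the fan $\adj_{\sig}(\sig',\Tau')$; you instead use $|\sig'|\cap|\Omega|=|\Tau|$ to find $\tau''\in\Tau$ with $x\in\tau''$, and exploit that cones of $\Tau=\exte_{\sig}(\sig')$ are free over $\sig$, killing the $\inn_{\sig}$-component of the face $\sigma\cap\tau''$ via \ref{dec20}. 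Both routes work; yours is a bit more elementary at this point, at the cost of redoing by hand what \ref{inexadj}~b) already packages.

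One small precision in your last step: the uniqueness you need is not that of the decomposition $\omega=\inn_{\sig}(\sigma)\oplus\tau'$, since at that moment $x$ is only known to lie in $\exte_{\sig}(\sigma)$, not yet in $\tau'$. The correct appeal is to the directness of $\sigma=\inn_{\sig}(\sigma)\oplus\exte_{\sig}(\sigma)$ (separability of $\sigma$ over $\sig$): writing $x=y+z$ with $y\in\inn_{\sig}(\sigma)$ and $z\in\tau'\subseteq\exte_{\sig}(\sigma)$, one gets $y=x-z\in\langle\inn_{\sig}(\sigma)\rangle\cap\langle\exte_{\sig}(\sigma)\rangle=0$, hence $x=z\in\tau'$. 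This is a one-line fix using facts you already invoke. (Also note that $\Omega$ is only given as a $W$-extension of $\Tau'$, hence a priori a $W$-semifan rather than a $W$-fan, but nothing in your argument uses sharpness of its cones.)
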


\begin{proof}
By \ref{adjthm1} it suffices to show that the intersection of a cone in $\adj_{\sig}(\sig',\Tau')$ and a cone in $\Omega$ is a common face of both. So, let $\sigma\in\sig'$, let $\tau\in\Tau'$, let $\omega\in\Omega$, and let $x\in(\inn_{\sig}(\sigma)\oplus(\exte_{\sig}(\sigma)\cap\tau))\cap\omega$. Then, we have $x\in|\adj_{\sig}(\sig',\Tau')|\cap|\Omega|=|\Tau'|$, hence there are $\rho\in\Tau'$ and $\thet\in\Tau$ with $x\in\rho\subseteq\thet$. As $\thet=\exte_{\sig}(\thet)$ it follows $\rho=\inn_{\sig}(\thet)\oplus(\exte_{\sig}(\thet)\cap\tau)\in\adj_{\sig}(\sig',\Tau')$, and as $\inn_{\sig}(\thet)=0$ we get $$x\in(\inn_{\sig}(\sigma)\oplus(\exte_{\sig}(\sigma)\cap\tau))\cap\rho=\exte_{\sig}(\sigma\cap\thet)\cap(\tau\cap\rho)\subseteq\tau$$ (\ref{inexadj}). This implies $(\inn_{\sig}(\sigma)\oplus(\exte_{\sig}(\sigma)\cap\tau))\cap\omega=\tau\cap\omega$, thus the claim (\ref{dec25}, \ref{adj10}).
\end{proof}


\subsection{Pulling back extensions}\mbox{}\medskip

\textit{Throughout this subsection let $\sig$ be a $W$-fan, and let $\xi\in\sig_1$. We set $\Tau\dfgl\sig/\xi$, $p\dfgl p_{\xi}:V\twoheadrightarrow V_{\xi}$ and $\hap\dfgl\hap_{\xi}:\sig_{\xi}\rightarrow\Tau$ (see \ref{proj}), and we denote by $\Lambda$ the set of $1$-dimensional sharp $W_{\xi}$-polycones.}\medskip

Our plan for constructing a packing of $\sig$ is to ``pack'' recursively each of its $1$-di\-men\-sion\-al cones. This will be achieved inductively on the dimension of $V$ by projecting along the cones to ``pack''. Hence we need a technique for pulling back an extension along such a projection. Our construction depends on the choice of a ``pullback datum''. Existence of ``pullback data'', proved in \ref{pullex}, is the only point where we choose and use a (Hilbert) norm on $V$.\medskip

\begin{no}
A \textit{$W$-pullback datum along $\xi$} is a triple $(q,a,B)$ such that $q$ is a $(W_{\xi},W)$-rational section of $p$, that $a\in W\cap\xi\setminus 0$, and that $B=(b_{\rho})_{\rho\in\Lambda}\in\prod_{\rho\in\Lambda}(W_{\xi}\cap\rho\setminus 0)$.

Let $(q,a,B)$ be a $W$-pullback datum along $\xi$, and let $\sigma$ be a $W_{\xi}$-polycone that is separable over $\Tau$ such that $\exte_{\Tau}(\sigma)$ is simplicial. Then, $B_{\sigma}\dfgl\{b_{\rho}\mid\rho\in\exte_{\Tau}(\sigma)_1\}$ is free, and therefore the $W$-polycone $\cone(q(B_{\sigma})+a)$ is simplicial. It is readily checked that the sum of $W$-polycones $\xi+\cone(q(B_{\sigma})+a)$ is direct, and from this it follows that the sum of $W$-polycones $\hap^{-1}(\inn_{\Tau}(\sigma))+\cone(q(B_{\sigma})+a)$ is direct, too. Thus, $$\psi_{q,a,B}(\sigma)\dfgl\hap^{-1}(\inn_{\Tau}(\sigma))\oplus\cone(q(B_{\sigma})+a)$$ is a sharp $W$-polycone (\ref{dec25}). If no confusion can arise we denote it by $\psi(\sigma)$.
\end{no}

\begin{lemma}\label{pull12}
Let $(q,a,B)$ be a $W$-pullback datum along $\xi$, and let $\sigma,\tau$ be $W_{\xi}$-polycones that are separable over $\Tau$ such that $\exte_{\Tau}(\sigma)$ and $\exte_{\Tau}(\tau)$ are simplicial.

{\rm a)} If $\sigma\cap\sigma'\in\face(\sigma)\cap\face(\sigma')$ then $\psi_{q,a,B}(\sigma)\cap\psi_{q,a,B}(\sigma')=\psi_{q,a,B}(\sigma\cap\sigma')$.

{\rm b)} If $\tau\fleq\sigma$ then $\psi_{q,a,B}(\tau)=\psi_{q,a,B}(\sigma)\cap(\psi_{q,a,B}(\tau)-\xi)\fleq\psi_{q,a,B}(\sigma)$.
\end{lemma}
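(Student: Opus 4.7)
The general plan is to exploit the direct-sum presentation $\psi(\sigma)=\widehat{\sigma}\oplus C_\sigma$, where $\widehat{\sigma}\dfgl\hap^{-1}(\inn_\Tau(\sigma))$ and $C_\sigma\dfgl\cone(q(B_\sigma)+a)$, together with \ref{dec20} for faces of direct sums and \ref{pack20} for the behaviour of $\inn_\Tau$ and $\exte_\Tau$ under intersection. Since $\xi\fleq\widehat{\sigma}$ for every $\sigma$ under consideration, $\langle\xi\rangle\subseteq\langle\widehat{\sigma}\rangle$; moreover, because $p(q(b_\rho)+a)=b_\rho$ and $V=\langle\xi\rangle\oplus q(V_\xi)$, the family $\{q(b_\rho)+a\mid\rho\in\exte_\Tau(\sigma)_1\}$ is free, so $C_\sigma$ is simplicial.

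For part (a) (reading the statement as $\sigma\cap\tau\in\face(\sigma)\cap\face(\tau)$), first apply \ref{pack20} to obtain $\inn_\Tau(\sigma\cap\tau)=\inn_\Tau(\sigma)\cap\inn_\Tau(\tau)$ and $\exte_\Tau(\sigma\cap\tau)=\exte_\Tau(\sigma)\cap\exte_\Tau(\tau)$; as a face of the simplicial $\exte_\Tau(\sigma)$, the latter has as rays exactly the common rays of $\exte_\Tau(\sigma)_1$ and $\exte_\Tau(\tau)_1$, so $B_{\sigma\cap\tau}=B_\sigma\cap B_\tau$. On the inner side, $\widehat{\sigma}\cap\widehat{\tau}=\widehat{\sigma\cap\tau}$ since $\hap$ is an isomorphism of ordered sets. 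On the outer side, $C_\sigma\cap C_\tau=C_{\sigma\cap\tau}$: comparing the unique simplicial expansions of a common element and projecting via $p$ yields an element of $\exte_\Tau(\sigma)\cap\exte_\Tau(\tau)=\exte_\Tau(\sigma\cap\tau)$ whose expansion in $\exte_\Tau(\sigma)$ uses only the common rays. Finally, to glue: given $x\in\psi(\sigma)\cap\psi(\tau)$, write $x=h+c=h'+c'$ in the two direct sums; applying $p$ places $p(x)\in\sigma\cap\tau$, and uniqueness of its separable decomposition (\ref{pack20}) forces $p(c)=p(c')\in\exte_\Tau(\sigma\cap\tau)$. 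Since each element of $C_\sigma$ is determined by its $p$-image (its $\langle\xi\rangle$-coordinate is $(\sum\mu_\rho)a$, read off from the sum of coefficients), one concludes $c=c'\in C_{\sigma\cap\tau}$, and then $h=h'\in\widehat{\sigma\cap\tau}$ by vector-space uniqueness in $\langle\widehat{\sigma}\rangle\oplus\langle C_\sigma\rangle$.

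For part (b), the relation $\tau\fleq\sigma$ together with \ref{pack20} yields $\inn_\Tau(\tau)\fleq\inn_\Tau(\sigma)$ and $\exte_\Tau(\tau)\fleq\exte_\Tau(\sigma)$; hence $\widehat{\tau}\fleq\widehat{\sigma}$ via $\hap$ and $B_\tau\subseteq B_\sigma$, making $C_\tau$ a face of the simplicial $C_\sigma$. Proposition \ref{dec20}(a) then immediately gives $\psi(\tau)\fleq\psi(\sigma)$. For the equality $\psi(\tau)=\psi(\sigma)\cap(\psi(\tau)-\xi)$, note that $\xi\subseteq\psi(\tau)$ implies $\psi(\tau)+\xi=\psi(\tau)$, so $\psi(\tau)-\xi=\psi(\tau)+\langle\xi\rangle$. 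The inclusion ``$\supseteq$'' is clear. For the other, given $x=h+c=(h'+v)+c'$ with $h\in\widehat{\sigma}$, $c\in C_\sigma$, $h'\in\widehat{\tau}$, $c'\in C_\tau$, $v\in\langle\xi\rangle\subseteq\langle\widehat{\tau}\rangle\subseteq\langle\widehat{\sigma}\rangle$, vector-space uniqueness in $\langle\widehat{\sigma}\rangle\oplus\langle C_\sigma\rangle$ forces $c=c'\in C_\tau$ and $h=h'+v$. Choosing a support form $u\in\widehat{\sigma}^{\vee}$ with $\widehat{\tau}=\widehat{\sigma}\cap u^{\perp}$ and observing $u(v)=0$ (as $\langle\xi\rangle\subseteq u^{\perp}$) yields $u(h)=u(h')=0$, so $h\in\widehat{\tau}$ and $x\in\psi(\tau)$.

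The main obstacle is the matching-of-decompositions step in (a): a point in $\psi(\sigma)\cap\psi(\tau)$ carries two \textit{a priori} unrelated direct-sum expressions, and one must funnel them through the downstairs intersection $\sigma\cap\tau$ (where the separable decomposition is unique by \ref{pack20}) and then lift back upstairs by means of the explicit coordinate formula for elements of $C_\sigma$. Part (b) is formally simpler because one of the two cones is already a face of the other, and the substantive content reduces to identifying $\psi(\tau)-\xi$ with the $\langle\xi\rangle$-thickening of the face $\psi(\tau)$ and applying a single support form to trim off the ambiguity in $\langle\xi\rangle$.
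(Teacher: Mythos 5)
Your proof is correct, and it uses the same basic toolbox as the paper (uniqueness of representations in the direct sum $\hap^{-1}(\inn_{\Tau}(\sigma))\oplus\cone(q(B_{\sigma})+a)$, the behaviour of $\inn_{\Tau}$ and $\exte_{\Tau}$ under faces and intersections from \ref{pack20}, and a supporting form vanishing on $\langle\xi\rangle$), but it is organized differently. The paper proves b) first -- by essentially your decomposition-matching argument, then exhibits the explicit form $w=v\circ p$ with $\psi(\sigma)\subseteq w^{\vee}$ and $\psi(\sigma)\cap w^{\perp}=\psi(\sigma)\cap(\psi(\tau)-\xi)$ to get the face relation -- and then deduces a) in two lines: since $p(\psi(\sigma)\cap\psi(\tau))\subseteq\sigma\cap\tau$, one has $\psi(\sigma)\cap\psi(\tau)\subseteq\psi(\sigma)\cap(\psi(\sigma\cap\tau)-\xi)$, which equals $\psi(\sigma\cap\tau)$ by b). You instead prove a) directly, funnelling the two decompositions of a common point through the separable decomposition of its image in $\sigma\cap\tau$ and using that $p$ is injective on $\cone(q(B_{\sigma})+a)$ via the coordinate formula $c=q(p(c))+(\sum_{\rho}\mu_{\rho})a$; this is somewhat longer but self-contained and avoids the detour through $\psi(\sigma\cap\tau)-\xi$. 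Likewise you obtain the face relation in b) from \ref{dec20}~a) applied to $\hap^{-1}(\inn_{\Tau}(\tau))\fleq\hap^{-1}(\inn_{\Tau}(\sigma))$ and $\cone(q(B_{\tau})+a)\fleq\cone(q(B_{\sigma})+a)$, which is arguably more direct than the paper's pulled-back form. Two small points to tidy: in b) you call the trivial inclusion ``$\supseteq$'', but for the equality as written the trivial one is ``$\subseteq$'' (namely $\psi(\tau)\subseteq\psi(\sigma)\cap(\psi(\tau)-\xi)$), and the argument you actually give establishes the nontrivial ``$\supseteq$''; and the step $\hap^{-1}(\inn_{\Tau}(\sigma))\cap\hap^{-1}(\inn_{\Tau}(\tau))=\hap^{-1}(\inn_{\Tau}(\sigma\cap\tau))$ merits the one-line justification that in a fan intersections are order-theoretic infima, so the order isomorphism $\hap$ preserves them.
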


\begin{proof}
a) The inclusion ``$\supseteq$'' follows from \ref{pack20}. As $p(\psi(\sigma)\cap\psi(\sigma'))\subseteq p(\psi(\sigma\cap\sigma'))$ we have $$\psi(\sigma)\cap\psi(\sigma')\subseteq\psi(\sigma)\cap(\psi(\sigma\cap\sigma')-\xi),$$ hence the remaining inclusion will follow from b).

b) The inclusion ``$\subseteq$'' follows from the inclusion ``$\supseteq$'' in a). Conversely, let $y\in\psi(\tau)$ and $z\in\xi$ with $x\dfgl y-z\in\psi(\sigma)$. There are unique $x_0\in\hap^{-1}(\inn_{\Tau}(\sigma))$ and $x_1\in\cone(q(B_{\sigma})+a)$ with $x=x_0+x_1$, and $y_0\in\hap^{-1}(\inn_{\Tau}(\tau))\fleq\hap^{-1}(\inn_{\Tau}(\sigma))$ and $y_1\in\cone(q(B_{\tau})+a)\subseteq\cone(q(B_{\sigma})+a)$ with $y=y_0+y_1$ (\ref{dec10}, \ref{pack20}). As $\xi\fleq\hap^{-1}(\inn_{\Tau}(\sigma))$ we have $x_0+z\in\hap^{-1}(\inn_{\Tau}(\sigma))$, hence we get $x_0+z=y_0\in\hap^{-1}(\inn_{\Tau}(\tau))$ and $x_1=y_1\in\cone(q(B_{\tau})+a)$. Moreover, there is a $u\in V^*$ with $\hap^{-1}(\inn_{\Tau}(\tau))=\hap^{-1}(\inn_{\Tau}(\sigma))\cap u^{\perp}$ (\ref{pack20}), and as $\xi\fleq\hap^{-1}(\inn_{\Tau}(\tau))$ we have $u(z)=0$. This yields $u(x_0)=u(x_0+z)=u(y_0)=0$, hence $x_0\in\hap^{-1}(\inn_{\Tau}(\tau))$ and therefore $x\in\psi(\tau)$.

Finally, if $v\in V^*_{\xi}$ with $\sigma\subseteq v^{\vee,V_{\xi}}$ and $\tau=\sigma\cap v^{\perp,V_{\xi}}$, then setting $w\dfgl v\circ p\in V^*$ we get $\psi(\sigma)\subseteq w^{\vee,V}$ and $\psi(\sigma)\cap w^{\perp,V}=\psi(\sigma)\cap(\psi(\tau)-\xi)$.
\end{proof}

\begin{no}
Let $\Tau\subseteq\Tau'$ be a relatively simplicial, separable $W_{\xi}$-extension and let $(q,a,B)$ be a $W$-pullback datum along $\xi$. We set $\Psi_{q,a,B}(\Tau')\dfgl\{\psi_{q,a,B}(\sigma)\mid\sigma\in\Tau'\}$ and $\psiquer_{q,a,B}(\Tau')\dfgl\bigcup_{\sigma\in\Psi_{q,a,B}(\Tau')}\face(\sigma)$. If no confusion can arise we denote these sets by $\Psi(\Tau')$ and $\psiquer(\Tau')$. The set $\sig_{q,a,B}(\Tau')\dfgl\sig\cup\psiquer_{q,a,B}(\Tau')$ is called \textit{the pullback of\/ $T'$ along $\xi$ by means of $(q,a,B)$ over $\sig$,} and if no confusion can arise we denote it by $\sig(\Tau')$.

A $W$-pullback datum $(q,a,B)$ along $\xi$ is called \textit{good for $\sig$ and $\Tau'$} if for every $\sigma\in\sig\setminus\psiquer_{q,a,B}(\Tau')$ and every $\tau\in\Tau'$ we have $\sigma\cap\psi_{q,a,B}(\tau)=\sigma\cap\psi_{q,a,B}(\inn_{\Tau}(\tau))$, and it is called \textit{very good for $\sig$ and $\Tau'$} if it is good for $\sig$ and $\Tau'$ and $\cone(q(B_{\tau})+a)$ is free over $\sig$ for every $\tau\in\Tau'$.
\end{no}

\begin{proposition}\label{pulla}
Let $\Tau\subseteq\Tau'$ be a relatively simplicial, separable $W_{\xi}$-extension, let $\sig'\subseteq\sig$ be a $W$-subfan, and let $(q,a,B)$ be a $W$-pullback datum along $\xi$ that is good for $\sig$ and $\Tau'$.

{\rm a)} $\sig'\subseteq\sig_{q,a,B}(\Tau')$ is a $W$-extension with $\sig_{q,a,B}(\Tau')/\xi=\Tau'$.

{\rm b)} If $\sig'\subseteq\sig$ is relatively simplicial then $\sig'\subseteq\sig_{q,a,B}(\Tau')$ is relatively simplicial.

{\rm c)} If $\sig'\subseteq\sig$ is separable and $(q,a,B)$ is very good for $\sig$ and $\Tau'$, then $\sig'\subseteq\sig_{q,a,B}(\Tau')$ is separable.
\end{proposition}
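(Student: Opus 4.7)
The approach is to analyse cones in $\sig_{q,a,B}(\Tau')=\sig\cup\psiquer_{q,a,B}(\Tau')$ by case distinction on where they lie, using throughout the direct-sum decomposition $\psi_{q,a,B}(\sigma)=\hap^{-1}(\inn_\Tau(\sigma))\oplus\cone(q(B_\sigma)+a)$ together with the splitting principles of \ref{dec20}.

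For (a), the $W$-fan axioms reduce to verifying that intersections are common faces; closure under faces is built into $\psiquer_{q,a,B}(\Tau')$ and holds for $\sig$. Two cones in $\sig$: immediate. Two cones in $\psiquer_{q,a,B}(\Tau')$, being faces of $\psi_{q,a,B}(\sigma)$, $\psi_{q,a,B}(\sigma')$ for $\sigma,\sigma'\in\Tau'$: Lemma \ref{pull12}(a) combined with $\Tau'$ being a fan and Lemma \ref{pull12}(b) (to realise faces in the image of $\psi_{q,a,B}$) suffices. The mixed case is precisely what the \emph{good} hypothesis is designed for: for $\sigma\in\sig\setminus\psiquer_{q,a,B}(\Tau')$ and $\tau\in\Tau'$ the identity $\sigma\cap\psi_{q,a,B}(\tau)=\sigma\cap\hap^{-1}(\inn_\Tau(\tau))$ reduces the intersection to one inside $\sig$, and the subcase $\sigma\in\sig\cap\psiquer_{q,a,B}(\Tau')$ is already covered by the pure-$\psiquer_{q,a,B}(\Tau')$ analysis. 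The identification $\sig_{q,a,B}(\Tau')/\xi=\Tau'$ then follows from $p\circ q=\mathrm{id}_{V_\xi}$ and $p(a)=0$: every $\sigma\in\Tau'$ equals $p(\psi_{q,a,B}(\sigma))$, and conversely any cone in $\sig_{q,a,B}(\Tau')$ containing $\xi$ projects to a cone of $\Tau\cup\Tau'=\Tau'$.

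For (b), a cone in $\sig$ that is free over $\sig'$ is simplicial by hypothesis. For a cone in $\psiquer_{q,a,B}(\Tau')$, that is, a face $\eta$ of some $\psi_{q,a,B}(\sigma)$, \ref{dec20} gives $\eta=\eta_1\oplus\eta_2$ with $\eta_1\fleq\hap^{-1}(\inn_\Tau(\sigma))\in\sig$ and $\eta_2\fleq\cone(q(B_\sigma)+a)$. Freeness of $\eta$ over $\sig'$ passes to the face $\eta_1$, which is therefore simplicial by relative simpliciality of $\sig'\subseteq\sig$; and $\eta_2$ is simplicial as a face of a simplicial cone. Hence $\eta$ is simplicial by \ref{dec70}.

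For (c), by \ref{pack20} it suffices to show that each $\psi_{q,a,B}(\sigma)$ with $\sigma\in\Tau'$ is separable over $\sig'$ (the cones of $\sig$ are already so). Writing $\psi_{q,a,B}(\sigma)=\omega\oplus C$ with $\omega=\hap^{-1}(\inn_\Tau(\sigma))\in\sig$ and $C=\cone(q(B_\sigma)+a)$, separability of $\sig'\subseteq\sig$ decomposes $\omega=\alpha\oplus\beta$ with $\alpha\in\sig'\cup\{0\}$ and $\beta$ sharp and free over $\sig'$, and the \emph{very good} hypothesis ensures that $C$ is free over $\sig$, hence over $\sig'$. The candidate separation is $(\alpha,\beta\oplus C)$; sharpness of $\beta\oplus C$ is clear from \ref{dec25}. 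The main obstacle — and the sole reason \emph{very good} is needed rather than just good — is the freeness of $\beta\oplus C$ over $\sig'$. My plan is to deduce this by observing that if $\zeta\in\sig'$ meets $\beta\oplus C\subseteq\psi_{q,a,B}(\sigma)$, then either the good hypothesis computes $\zeta\cap\psi_{q,a,B}(\sigma)$ as $\zeta\cap\omega$ (when $\zeta\notin\psiquer_{q,a,B}(\Tau')$) or Lemma \ref{pull12}(a) identifies it with a face $\psi_{q,a,B}(\sigma')\fleq\psi_{q,a,B}(\sigma)$ (when $\zeta\in\sig'\cap\psiquer_{q,a,B}(\Tau')$); in either case, uniqueness of the direct-sum decomposition of $\psi_{q,a,B}(\sigma)$ combined with $\beta\cap\sig'\subseteq\{0\}$ and $C\cap\sig\subseteq\{0\}$ forces both components of such a point to vanish.
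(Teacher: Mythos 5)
Your proof is correct and takes essentially the same route as the paper: a) reduces via \ref{pull12} to the mixed intersections $\sigma\cap\psi_{q,a,B}(\tau)$ handled by the goodness hypothesis and \ref{dec25}, b) uses the decomposition from \ref{dec20} together with relative simpliciality of $\sig'\subseteq\sig$ and simpliciality of $\cone(q(B_\tau)+a)$, and c) exhibits the separating pair $(\alpha,\beta\oplus C)$, which is precisely what the paper's terse appeal to \ref{pack20} amounts to. The only loose spot is in c) for $\zeta\in\sig'\cap\psiquer_{q,a,B}(\Tau')$, since such a $\zeta$ need not itself be of the form $\psi_{q,a,B}(\sigma')$; but once a) is established, $\zeta$ and $\beta\oplus C$ both lie in the fan $\sig_{q,a,B}(\Tau')$, so their intersection is a common face, and \ref{dec20}~c) together with componentwise freeness ($\beta$ over $\sig'$ by separability, $C$ over $\sig$ by very goodness) forces it to be $0$, exactly as you intend.
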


\begin{proof}
a) By \ref{pull12} it suffices to show that if $\sigma\in\sig\setminus\psiquer(\Tau')$ and $\tau\in\Tau'$ then $$\sigma\cap\psi(\tau)\in\face(\sigma)\cap\face(\psi(\tau)).$$ This follows easily on use of \ref{dec25}.

b) If $\sigma\in\sig(\Tau')\setminus\sig$ is free over $\sig'$ then there are $\tau\in\Tau'$, $\omega\fleq\hap^{-1}(\inn_{\Tau}(\tau))$ and $\omega'\fleq\cone(q(B_{\tau})+a)$ with $\sigma=\omega\oplus\omega'$ (\ref{dec20}). As $\sigma$ is free over $\sig'$ the same holds for $\omega$, hence $\omega$ is simplicial, and as $\cone(q(B_{\tau})+a)$ is simplicial the same holds for $\omega'$. Thus, $\sigma$ is simplicial (\ref{dec70}).

c) follows immediately from \ref{pack20}.
\end{proof}

\begin{lemma}\label{pulll}
Let $\Tau\subseteq\Tau'$ be a relatively simplicial, separable $W_{\xi}$-extension, and let $(q,a,B)$ be a $W$-pullback datum along $\xi$ that is good for $\sig$ and $\Tau'$.

{\rm a)} $\ff(\sig(\Tau'))=\{\sigma\in\ff(\sig)\mid\psiquer(\Tau')_{\sigma}\subseteq\sig_{\sigma}\}\cup\{\sigma\in\ff(\psiquer(\Tau'))\mid\sig_{\sigma}\subseteq\psiquer(\Tau')_{\sigma}\}$.

{\rm b)} $\Tau'$ is equifulldimensional if and only if $\psiquer(\Tau')$ is so, and then $\dd(\sig(\Tau'))=\dd(\sig)\setminus\psiquer(\Tau')$.
\end{lemma}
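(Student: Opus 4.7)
The plan is to reduce both parts to case analysis on the decomposition $\sig(\Tau') = \sig \cup \psiquer(\Tau')$, using two basic facts: both $\sig$ and $\psiquer(\Tau')$ are fans (so faces of their cones lie in them), and every cone in $\sig(\Tau')_n$ lies in $\sig_n$ or in $\psiquer(\Tau')_n$. For part (b) I additionally need the dimension formula $\dim(\psi(\sigma)) = \dim(\sigma)+1$ for every $\sigma \in \Tau'$, which follows from the direct sum $\psi(\sigma) = \hap^{-1}(\inn_{\Tau}(\sigma)) \oplus \cone(q(B_{\sigma})+a)$, the fact $\xi \fleq \hap^{-1}(\inn_{\Tau}(\sigma))$, and the simpliciality of $\cone(q(B_{\sigma})+a)$ (with $|B_{\sigma}| = \dim(\exte_{\Tau}(\sigma))$ generators).

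For the inclusion $\supseteq$ in part (a), if $\sigma \in \ff(\sig)$ with $\psiquer(\Tau')_{\sigma} \subseteq \sig_{\sigma}$, then the unique full $\sig$-superface of $\sigma$ is also its unique full $\sig(\Tau')$-superface, since every candidate in $\psiquer(\Tau')_{\sigma}$ of dimension $n$ already lies in $\sig$ by hypothesis; the case $\sigma \in \ff(\psiquer(\Tau'))$ with $\sig_{\sigma} \subseteq \psiquer(\Tau')_{\sigma}$ is symmetric. For $\subseteq$, I take $\sigma \in \ff(\sig(\Tau'))$ with unique full superface $\tau$: if $\tau \in \sig_n$, face-closure of $\sig$ forces $\sigma \in \sig_{n-1}$, and uniqueness of $\tau$ within $\sig_n$ descends from uniqueness within $\sig(\Tau')_n$, so $\sigma \in \ff(\sig)$; moreover, each $\omega \in \psiquer(\Tau')_{\sigma}$ has $\dim(\omega) \geq n-1$, so either $\omega = \sigma \in \sig_{\sigma}$ or $\omega$ is $n$-dimensional and thus equals $\tau \in \sig_{\sigma}$ by uniqueness, yielding $\psiquer(\Tau')_{\sigma} \subseteq \sig_{\sigma}$. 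The case $\tau \in \psiquer(\Tau')_n$ is symmetric, using face-closure of $\psiquer(\Tau')$ instead.

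For part (b), the dimension formula combined with face preservation (\ref{pull12}~b)) and bijectivity of $\psi : \Tau' \to \Psi(\Tau')$ (injectivity from $p(\psi(\sigma)) = \inn_{\Tau}(\sigma) + \exte_{\Tau}(\sigma) = \sigma$, using $p \circ q = \mathrm{id}$ and $p(a) = 0$) immediately transfers equifulldimensionality from $\Tau'$ to $\psiquer(\Tau')$. Conversely, any $\omega \in \psiquer(\Tau')_n$ is a face of some $\psi(\tau)$ with $\tau \in \Tau'$, and the dimension formula forces $\omega = \psi(\tau)$ with $\tau \in \Tau'_{n-1}$; applying $p$ to any face relation $\psi(\sigma) \fleq \psi(\tau)$ yields $\sigma \subseteq \tau$, and the fan property of $\Tau'$ upgrades this to $\sigma \fleq \tau$, showing $\Tau'$ equifulldimensional.

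Granted the equivalence, the identity $\dd(\sig(\Tau')) = \dd(\sig) \setminus \psiquer(\Tau')$ follows from a final case split: no cone of $\psiquer(\Tau')$ can lie in $\dd(\sig(\Tau'))$, since equifulldimensionality of $\psiquer(\Tau')$ exhibits a strictly larger $n$-dimensional superface in $\sig(\Tau')$; conversely, a cone $\sigma \in \sig \setminus \psiquer(\Tau')$ of dimension less than $n$ is maximal in $\sig(\Tau')$ iff maximal in $\sig$, the nontrivial direction ruling out any strict $\sig(\Tau')$-superface in $\psiquer(\Tau') \setminus \sig$ via face-closure of $\psiquer(\Tau')$. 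The main bookkeeping hazard throughout is that $\sig$ and $\psiquer(\Tau')$ may genuinely overlap, so the two summands of the union in (a) are not disjoint; the argument only relies on the union covering $\ff(\sig(\Tau'))$, and careful attention to which fan's face-closure is being invoked keeps the cases clean.
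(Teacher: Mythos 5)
Your proof is correct and follows exactly the route the paper intends: the paper disposes of a) as ``straightforward'' and of b) as a consequence of \ref{pull12}, and your argument is precisely that straightforward verification -- case analysis on the (non-disjoint) union $\sig(\Tau')=\sig\cup\psiquer(\Tau')$ together with the dimension formula $\dim(\psi(\sigma))=\dim(\sigma)+1$ and the face preservation/reflection of $\psi$ via \ref{pull12}~b) and $p$. Nothing essential is missing.
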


\begin{proof}
a) is straightforward to prove, and b) follows readily on use of \ref{pull12}.
\end{proof}

\begin{lemma}\label{tslem}
Let $\Tau\subseteq\Tau'$ be an equifulldimensional, relatively simplicial, separable $W_{\xi}$-extension, let $(q,a,B)$ be a $W$-pullback datum along $\xi$ that is very good for $\sig$ and $\Tau'$, let $\sigma\in\Tau'\setminus\Tau$, and let $\tau\in\sig_{q,a,B}(\Tau')$. If\/ $\xi\not\fleq\tau$ and $\cone(q(B_{\sigma})+a)\fleq\tau\fleq\psi_{q,a,B}(\sigma)$, then $\tau\subseteq\fr(|\sig_{q,a,B}(\Tau')|)$.
\end{lemma}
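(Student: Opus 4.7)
The plan is to establish $\inn_{\langle\tau\rangle}(\tau)\subseteq\fr(|\sig_{q,a,B}(\Tau')|)$; then $\tau=\cl_{\langle\tau\rangle}(\inn_{\langle\tau\rangle}(\tau))$ (\ref{topprop}~a)) combined with closedness of the frontier yields the claim. Note that $\sigma\in\Tau'\setminus\Tau$ forces $\exte_{\Tau}(\sigma)\neq 0$, whence $\cone(q(B_{\sigma})+a)\neq 0$, so $\tau$ has nonempty relative interior. For $x\in\inn_{\langle\tau\rangle}(\tau)$, I aim to show that $y\dfgl x-\delta a\notin|\sig_{q,a,B}(\Tau')|$ for all sufficiently small $\delta>0$.

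Set $\mu\dfgl\hap^{-1}(\inn_{\Tau}(\sigma))$, so $\psi_{q,a,B}(\sigma)=\mu\oplus\cone(q(B_{\sigma})+a)$ with $\xi\fleq\mu$. Proposition \ref{dec20}~c) applied to $\tau\fleq\psi_{q,a,B}(\sigma)$ yields $\tau=\mu'\oplus\cone(q(B_{\sigma})+a)$ with $\mu'\dfgl\tau\cap\mu\fleq\mu$, and $\xi\not\fleq\tau$ forces $\xi\not\fleq\mu'$. Pick a convex neighbourhood $U$ of $x$ meeting only cones of $\sig_{q,a,B}(\Tau')$ containing $x$; by \ref{top10} these are precisely the supercones of $\tau$, and for small $\delta$ any $\sigma''\in\sig_{q,a,B}(\Tau')$ containing $y$ is such a supercone. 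A crucial auxiliary observation is that every such $\sigma''$ lies in $\psiquer_{q,a,B}(\Tau')$: otherwise $\sigma''\in\sig$ and $\cone(q(B_{\sigma})+a)\fleq\sigma''$, whence very-goodness of $(q,a,B)$ (making $\cone(q(B_{\sigma})+a)$ free over $\sig$) would force $\cone(q(B_{\sigma})+a)=0$. Hence $\sigma''\fleq\psi_{q,a,B}(\eta)$ for some $\eta\in\Tau'$, and one decomposes $\sigma''=\nu\oplus\cone''$ inside $\psi_{q,a,B}(\eta)=\mu_{\eta}\oplus\cone(q(B_{\eta})+a)$ with $\mu_{\eta}\dfgl\hap^{-1}(\inn_{\Tau}(\eta))$.

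The argument then splits on whether $\xi\fleq\sigma''$. If $\xi\fleq\sigma''$, then $a\in\sigma''$, and picking $u'\in(\sigma'')^{\vee}$ with $\tau=\sigma''\cap(u')^{\perp}$ yields $u'(a)>0$ (since $a\notin\tau$), so $u'(y)=-\delta u'(a)<0$ contradicts $y\in\sigma''$. If instead $\xi\not\fleq\sigma''$, equivalently $\xi\not\fleq\nu$, pick $u''\in V^{*}$ with $\mu_{\eta}\subseteq(u'')^{\vee}$ and $\nu=\mu_{\eta}\cap(u'')^{\perp}$; then $u''(a)>0$ because $a\in\mu_{\eta}\setminus\nu$. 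Writing $x=w_{x}+z_{x}$ in the direct sum of $\psi_{q,a,B}(\eta)$, uniqueness together with $-\delta a\in\langle\mu_{\eta}\rangle$ forces the $\mu_{\eta}$-component of $y$ to equal $w_{x}-\delta a$; then $y\in\sigma''=\nu\oplus\cone''$ would require $w_{x}-\delta a\in\nu$, yet $u''(w_{x}-\delta a)=u''(w_{x})-\delta u''(a)=-\delta u''(a)<0$ contradicts $\nu\subseteq(u'')^{\perp}$.

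The main obstacle is this second case, since the support functional for $\tau$ as a face of $\sigma''$ need not take a positive value on $a$ when $\xi\not\fleq\sigma''$; the maneuver is to replace it by a support functional of $\nu\fleq\mu_{\eta}$ inside $\psi_{q,a,B}(\eta)$, which does have positive value on $a$ because $\xi\fleq\mu_{\eta}$ but $\xi\not\fleq\nu$. Both cases giving $y\notin|\sig_{q,a,B}(\Tau')|$, one concludes $x\in\fr(|\sig_{q,a,B}(\Tau')|)$, and the closure argument completes the proof.
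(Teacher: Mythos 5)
Your proof is correct, but it takes a genuinely different route from the paper's. Both arguments begin with the same observation (yours via freeness of $\cone(q(B_{\sigma})+a)$ over $\sig$ from very-goodness): every cone of $\sig_{q,a,B}(\Tau')$ containing $\tau$ lies in $\psiquer_{q,a,B}(\Tau')$. From there the paper stays combinatorial: using equifulldimensionality of $\Tau'$ (via \ref{pulll}~b)) and the fact that all full cones of $\psiquer_{q,a,B}(\Tau')$ contain $\xi$, it shows by the ``faces of full polycones are intersections of facets'' property (\ref{topprop}~b)) that some $(n-1)$-cone $\rho$ with $\tau\fleq\rho$ belongs to $\ff(\sig_{q,a,B}(\Tau'))$, and then concludes from the frontier description \ref{top80}. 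You instead argue topologically and directly: perturb a relative-interior point $x$ of $\tau$ to $x-\delta a$ and exclude it from every supercone $\sigma''$ by a supporting functional, splitting on whether $\xi\fleq\sigma''$ (functional cutting out $\tau$) or not (functional cutting out $\nu=\sigma''\cap\mu_{\eta}$ inside $\mu_{\eta}$, exploiting the direct-sum structure of $\psi_{q,a,B}(\eta)$). Your route buys a slightly stronger statement -- you never use equifulldimensionality of $\Tau'$ -- and is self-contained at the level of \ref{topprop}, \ref{top10}, \ref{dec20} and \ref{pack20}, whereas the paper's route plugs directly into the $\ff$/$\dd$ machinery of \ref{top80} that drives the rest of the construction. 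Two steps you leave implicit should be spelled out: $a\notin\tau$ (if $a\in\tau$ then $\xi=\xi\cap\tau\fleq\tau$ by the fan property, contradicting $\xi\not\fleq\tau$), and $u''(w_{x})=0$ in the second case (since $x\in\sigma''=\nu\oplus\cone''$, uniqueness of the decomposition in $\psi_{q,a,B}(\eta)$ forces $w_{x}\in\nu\subseteq(u'')^{\perp}$); both are immediate, so they do not affect correctness.
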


\begin{proof}
We have $\sig(\Tau')_{\tau}\subseteq\psiquer(\Tau')_{\tau}$, for if $\omega\in\sig(\Tau')_{\tau}\setminus\psiquer(\Tau')$ then $\omega\in\sig\setminus\sig_{\xi}$, hence $$\cone(q(B_{\sigma})+a)\fleq\tau\fleq\omega\cap\psi(\sigma)=\omega\cap\psi(\inn_{\Tau}(\sigma))\in\sig$$ and in particular $\cone(q(B_{\sigma})+a)=0$ by hypothesis on $(q,a,B)$, yielding the contradiction $\tau\in\Tau$. Furthermore, we have $\psiquer(\Tau')_{\tau}\cap\psiquer(\Tau')_n\neq\emptyset$ (\ref{pulll}~b)) and $\psiquer(\Tau')_n\subseteq\psiquer(\Tau')_{\xi}$. If for every $$\rho\in\psiquer(\Tau')_{\tau}\cap\psiquer(\Tau')_{n-1}$$ there are $\omega,\omega'\in\psiquer(\Tau')_{\rho}\cap\psiquer(\Tau')_n$ with $\rho=\omega\cap\omega'$, then it follows $\xi\fleq\rho$, yielding the contradiction $$\textstyle\xi\fleq\bigcap_{\omega\in\psiquer(\Tau')_{\tau}\cap\psiquer(\Tau')_n}\bigcap\{\rho\in\psiquer(\Tau')_{n-1}\mid\tau\fleq\rho\fleq\omega\}=\tau$$ (\ref{topprop}~b)). So, there is a $\rho\in\ff(\sig(\Tau'))$ with $\tau\fleq\rho$, implying $\sig_{\rho}\subseteq\sig(\Tau')_{\tau}\cap\sig(\Tau')_{\rho}\subseteq\psiquer(\Tau')_{\tau}\cap\sig(\Tau')_{\rho}=\psiquer(\Tau')_{\rho}$, hence $\rho\in\ff(\sig(\Tau'))$ (\ref{pulll}~a)) and therefore $\tau\subseteq\rho\subseteq\fr(|\sig(\Tau')|)$ (\ref{top80}).
\end{proof}

\begin{proposition}\label{pullb}
Let $\Tau\subseteq\Tau'$ be a relatively simplicial, separable $W_{\xi}$-extension, let $\sig'\subseteq\sig$ be a $W$-subfan, and let $(q,a,B)$ be a $W$-pullback datum along $\xi$ that is very good for $\sig$ and $\Tau'$. If $\sig'\subseteq\sig$ is tightly separable and $\Tau'$ is equifulldimensional, then $\sig'\subseteq\sig_{q,a,B}(\Tau')$ is tightly separable.
\end{proposition}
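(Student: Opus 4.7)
The approach is to decompose $\exte_{\sig'}(\sigma)$ via the transitivity of separable extensions and apply \ref{tslem} in the generic case, handling the remaining boundary subcases by combinatorial frontier analysis.

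By \ref{pulla}~c), $\sig'\subseteq\sig_{q,a,B}(\Tau')$ is separable. Applying \ref{pulla}~c) also with $\sig'$ replaced by $\sig$ shows $\sig\subseteq\sig_{q,a,B}(\Tau')$ is separable, so by the transitivity formula of \ref{pack20}, for each $\sigma\in\sig_{q,a,B}(\Tau')$,
$$\exte_{\sig'}(\sigma)=\mu\oplus\nu,\qquad\mu\dfgl\exte_{\sig'}(\inn_{\sig}(\sigma))\in\sig,\quad\nu\dfgl\exte_{\sig}(\sigma),$$
and the hypothesis of tight separability of $\sig'\subseteq\sig$ gives $\mu\setminus 0\subseteq\fr(|\sig|)$. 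If $\sigma\in\sig$ then $\nu=0$; otherwise $\sigma\in\psiquer_{q,a,B}(\Tau')\setminus\sig$ decomposes (cf.\ the proof of \ref{pulla}~b)) as $\omega_0\oplus\omega_0'$ with $\omega_0\fleq\hap^{-1}(\inn_{\Tau}(\tau))\in\sig$ and $0\neq\omega_0'\fleq\cone(q(B_{\tau})+a)$ for some $\tau\in\Tau'$, so that $\nu=\omega_0'$.

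In the case $\nu\neq 0$, I would identify the unique face $\tau^{\star}$ of $\tau$ in $\Tau'$ with $\inn_{\Tau}(\tau^{\star})=\inn_{\Tau}(\tau)$ and with $\exte_{\Tau}(\tau^{\star})$ generated by exactly those rays $\rho$ for which $\cone(q(b_{\rho})+a)\fleq\nu$. Then $\nu=\cone(q(B_{\tau^{\star}})+a)$ and, by \ref{dec20}, $\exte_{\sig'}(\sigma)\fleq\psi_{q,a,B}(\tau^{\star})$. Since the rays of $\nu$ are generated by vectors $q(b_{\rho})+a$ whose $q(b_{\rho})$-component lies in a complement of $\xi$, none of these rays equals $\xi$; hence by \ref{partition}, $\xi\fleq\exte_{\sig'}(\sigma)$ if and only if $\xi\fleq\mu$. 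Provided $\xi\not\fleq\mu$, \ref{tslem} (with the roles $\sigma:=\tau^{\star}$ and $\tau:=\exte_{\sig'}(\sigma)$) then delivers $\exte_{\sig'}(\sigma)\subseteq\fr(|\sig_{q,a,B}(\Tau')|)$.

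The main obstacle is the two remaining boundary subcases, $\sigma\in\sig$ and $\xi\fleq\mu$. For $\sigma\in\sig$ with $\xi\not\fleq\mu$, I would select $\rho\in\dd(\sig)\cup\ff(\sig)$ through a relative-interior point of $\mu$ via \ref{top80}, giving $\mu\fleq\rho$; every $\sig$-cone lying in $\psiquer_{q,a,B}(\Tau')$ must belong to $\sig_\xi$ since by \ref{dec20} any such decomposes as $\omega_1\oplus\omega_2$ with $\omega_2$ free over $\sig$ and hence forces $\omega_2=0$, so choosing $\rho$ with $\xi\not\fleq\rho$ yields $\rho\notin\psiquer_{q,a,B}(\Tau')$, and then \ref{pulll} places $\rho\in\dd(\sig_{q,a,B}(\Tau'))\cup\ff(\sig_{q,a,B}(\Tau'))$, whence $\mu\subseteq\rho\subseteq\fr(|\sig_{q,a,B}(\Tau')|)$ by \ref{top80}. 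The residual subcases (where $\xi\fleq\mu$, or where every such $\rho$ contains $\xi$) would be treated via \ref{top100}~a) with $\tau:=\xi$: the identity $\sig_{q,a,B}(\Tau')/\xi=\Tau'$ from \ref{pulla}~a) together with equifulldimensionality of $\Tau'$ and \ref{top80} reduces the question to a frontier inclusion inside $\Tau'$, which follows from \ref{top100}~a) applied inside $\sig$ (giving $\mu/\xi\subseteq\fr(|\Tau|)$) combined with the structure of the extension $\Tau\subseteq\Tau'$.
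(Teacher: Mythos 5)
Your skeleton for the generic case --- the decomposition $\exte_{\sig'}(\sigma)=\mu\oplus\nu$ via transitivity of separability, the identification of the face $\tau^{\star}$ with $\nu=\cone(q(B_{\tau^{\star}})+a)\fleq\exte_{\sig'}(\sigma)\fleq\psi_{q,a,B}(\tau^{\star})$, and the appeal to \ref{tslem} --- is sound and is essentially the paper's argument. But the two ``boundary'' cases, which are where the real work lies, are not correctly handled.

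First, your combinatorial claim that every cone of $\sig$ lying in $\psiquer_{q,a,B}(\Tau')$ belongs to $\sig_{\xi}$ is false: your argument only shows that such a cone is a \emph{face} of some $\hap^{-1}(\inn_{\Tau}(\tau))\in\sig_{\xi}$, and a proper face of a cone containing $\xi$ need not contain $\xi$. Maximality rescues the claim for $\rho\in\dd(\sig)$, but a $\rho\in\ff(\sig)$ with $\xi\not\fleq\rho$ can perfectly well be a facet of a full $\hap^{-1}(\inn_{\Tau}(\tau))$ and hence lie in $\psiquer_{q,a,B}(\Tau')$. So the inference ``$\xi\not\fleq\rho$ implies $\rho\notin\psiquer_{q,a,B}(\Tau')$'' fails; the correct dichotomy is whether $\psiquer_{q,a,B}(\Tau')_{\rho}\subseteq\sig_{\rho}$ (the condition in \ref{pulll}~a)), not whether $\xi\fleq\rho$. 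When $\psiquer_{q,a,B}(\Tau')_{\rho}\not\subseteq\sig_{\rho}$ one must instead produce a $\tau\in\Tau'\setminus\Tau$ with $\rho\fleq\psi_{q,a,B}(\tau)$ and invoke \ref{tslem} again, applied to the face $\exte_{\sig'}(\sigma)\oplus\cone(q(B_{\tau})+a)$ of $\psi_{q,a,B}(\tau)$ (which contains $\cone(q(B_{\tau})+a)$ as a face by \ref{dec20} and \ref{dec25}). This is in effect what the paper does, and the same device disposes of your subcase ``every $\rho$ contains $\xi$'': there necessarily $\rho\in\dd(\sig)\cap\psiquer_{q,a,B}(\Tau')$, and equifulldimensionality of $\Tau'$ supplies the required $\tau\in\Tau'\setminus\Tau$.

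Second, the proposed treatment of the subcase $\xi\fleq\mu$ via \ref{top100}~a) cannot work: it would amount to proving $\mu/\xi\subseteq\fr(|\Tau'|)$, which fails whenever $\Tau'$ is complete --- precisely the situation in \ref{8.10}. Indeed, if $\xi\fleq\mu$ is possible at all, i.e.\ if $\xi$ is free over $\sig'$ (equivalently $\xi\notin\sig'$), the conclusion itself can fail: then $\exte_{\sig'}(\xi)=\xi$, while $\sig_{q,a,B}(\Tau')/\xi=\Tau'$ being complete forces $\xi\setminus 0\subseteq\inn(|\sig_{q,a,B}(\Tau')|)$ by \ref{qpc} and \ref{top20}. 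So this subcase is not to be treated but to be excluded: since $\mu=\exte_{\sig'}(\inn_{\sig}(\sigma))$ is free over $\sig'$ and faces of free cones are free, $\xi\fleq\mu$ would force $\xi$ to be free over $\sig'$; in the only situation in which the proposition is used (\ref{8.10}) one has $\xi\in\sig'_1$, so this never occurs and every invocation of \ref{tslem} is legitimate. You were right to flag $\xi\fleq\mu$ as a danger, but the resolution is freeness over $\sig'$, not a frontier computation in $\Tau'$.
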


\begin{proof}
Let $\sigma\in\sig(\Tau')\setminus\sig'$. As $\sig(\Tau')$ is separable over $\sig'$ (\ref{pulla}~c)) it suffices to show $\exte_{\sig'}(\sigma)\setminus 0\subseteq\fr(|\sig(\Tau')|)$. If $\sigma\notin\sig$ then there is a $\tau\in\Tau'\setminus\Tau$ with $\sigma\fleq\psi(\tau)$, hence $\exte_{\sig'}(\sigma)\fleq\psi(\tau)$, and the claim follows from \ref{tslem}. Suppose $\sigma\in\sig$. If $\exte_{\sig'}(\sigma)\fleq\omega$ for an $\omega\in\ff(\sig)$ with $\psiquer(\Tau')_{\omega}\subseteq\sig_{\omega}$ or an $\omega\in\dd(\sig)\setminus\psiquer(\Tau')$, then the claim follows from \ref{pulll}~a) and \ref{top80}. So, suppose $\exte_{\sig'}(\sigma)\not\fleq\omega$ for all $\omega\in\ff(\sig)$ with $\psiquer(\Tau')_{\omega}\subseteq\sig_{\omega}$ and all $\omega\in\dd(\sig)\setminus\psiquer(\Tau')$. We first consider the case that $\exte_{\sig'}(\sigma)\fleq\omega$ for an $\omega\in\ff(\sig)$. As $\psiquer(\Tau')_{\omega}\not\subseteq\sig_{\omega}$ there is a $\tau\in\Tau'\setminus\Tau$ with $\omega\fleq\psi(\tau)$, hence $\exte_{\sig'}(\sigma)\fleq\psi(\tau)$, and the claim follows from \ref{tslem}. Finally, we consider the case that $\exte_{\sig'}(\sigma)\not\fleq\omega$ for all $\omega\in\ff(\sig)$. As $\exte_{\sig'}(\sigma)\subseteq\fr(|\sig|)$ there is an $\omega\in\dd(\sig)\cap\psiquer(\Tau')$ with $\exte_{\sig'}(\sigma)\fleq\omega$. Equifulldimensionality of $\Tau'$ implies that there is a $\tau\in\Tau'\setminus\Tau$ with $\omega\fleq\psi(\tau)$, hence $\exte_{\sig'}(\sigma)\fleq\psi(\tau)$, and the claim follows from \ref{tslem}.
\end{proof}

\begin{proposition}\label{pullc}
Let $\Tau\subseteq\Tau'$ be a relatively simplicial, separable $W_{\xi}$-extension, let $\sig'\subseteq\sig$ be a $W$-subfan, and let $(q,a,B)$ be a $W$-pullback datum along $\xi$ that is good for $\sig$ and $\Tau'$.

{\rm a)} If $\Tau'$ is complete then $C_{\sig'}(\sig_{q,a,B}(\Tau'))=C_{\sig'}(\sig)\setminus\{\xi\}$.

{\rm b)} If $\xi\in\sig'$ and $\sig_{\max}\subseteq\bigcup_{\rho\in\sig'_1}\sig_{\rho}$, then $\sig_{q,a,B}(\Tau')_{\max}\subseteq\bigcup\{\sig_{q,a,B}(\Tau')_{\rho}\mid\rho\in\sig'_1\setminus C_{\sig'}(\sig_{q,a,B}(\Tau'))\}$.

{\rm c)} If $\Tau'$ is complete and maximal elements of $\bigcup_{\rho\in\sig'_1\setminus C_{\sig'}(\sig)}\sig_{\rho}$ are full, then maximal elements of $\bigcup_{\rho\in\sig'_1\setminus C_{\sig'}(\sig_{q,a,B}(\Tau'))}\sig_{q,a,B}(\Tau')_{\rho}$ are full.
\end{proposition}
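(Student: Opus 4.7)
The plan is to deduce part c) from part a), the monotonicity of $\psi$ established in \ref{pull12}~b), and the completeness of $\Tau'$. Set $T \dfgl \sig'_1 \setminus C_{\sig'}(\sig_{q,a,B}(\Tau'))$; by part a) we have $T = (\sig'_1 \setminus C_{\sig'}(\sig)) \cup (\sig'_1 \cap \{\xi\})$. A cone is maximal in $\bigcup_{\rho \in T} \sig_{q,a,B}(\Tau')_\rho$ if and only if it lies in $\sig_{q,a,B}(\Tau')_{\max}$ and contains some $\rho \in T$, since any cone of $\sig_{q,a,B}(\Tau')$ strictly containing such a cone still contains that $\rho$. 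So it suffices to show that every $\tau \in \sig_{q,a,B}(\Tau')_{\max}$ containing a ray in $T$ is full.

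If $\tau \in \psiquer_{q,a,B}(\Tau')$, then $\tau \fleq \psi_{q,a,B}(\sigma)$ for some $\sigma \in \Tau'$, and maximality of $\tau$ in $\sig_{q,a,B}(\Tau')$ forces $\tau = \psi_{q,a,B}(\sigma)$. Moreover, if $\sigma \flneq \sigma'$ in $\Tau'$, then \ref{pull12}~b) together with the identity $\dim(\psi_{q,a,B}(\sigma)) = \dim(\sigma) + 1$ gives $\tau = \psi_{q,a,B}(\sigma) \flneq \psi_{q,a,B}(\sigma')$, again contradicting maximality; hence $\sigma \in \Tau'_{\max}$. Since $\Tau'$ is complete, \ref{top70} ensures that $\Tau'$ is equifulldimensional, whence $\dim(\sigma) = n-1$ and therefore $\dim(\tau) = n$.

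In the remaining case $\tau \notin \psiquer_{q,a,B}(\Tau')$ we have $\tau \in \sig$, and moreover $\xi \not\fleq \tau$: otherwise $\tau \in \sig_\xi$ would give $\tau/\xi \in \Tau$ and $\tau = \hap^{-1}(\tau/\xi) = \psi_{q,a,B}(\tau/\xi) \in \psiquer_{q,a,B}(\Tau')$, a contradiction. Therefore the ray of $T$ contained in $\tau$ lies in $\sig'_1 \setminus C_{\sig'}(\sig)$, and since $\sig \subseteq \sig_{q,a,B}(\Tau')$, $\tau$ is maximal in $\sig$ and hence in $\bigcup_{\rho \in \sig'_1 \setminus C_{\sig'}(\sig)} \sig_\rho$; the hypothesis then yields $\dim(\tau) = n$. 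The main subtlety is the first case, where extracting $\tau = \psi_{q,a,B}(\sigma)$ for a maximal $\sigma \in \Tau'$ from maximality of $\tau$ in $\sig_{q,a,B}(\Tau')$ relies crucially on the strict-dimension monotonicity of $\psi_{q,a,B}$ coming from \ref{pull12}~b); this, combined with the observation that cones in $\sig_\xi$ are automatically picked up by $\psiquer_{q,a,B}(\Tau')$, lets the two cases exhaust all of $\sig_{q,a,B}(\Tau')_{\max}$.
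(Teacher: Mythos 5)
Your proposal only addresses part c) of the proposition: part a) is taken for granted (``by part a) we have $T=(\sig'_1\setminus C_{\sig'}(\sig))\cup(\sig'_1\cap\{\xi\})$'') and part b) is never mentioned. This is a genuine gap, because a) is precisely the substantive content of the statement. Proving $C_{\sig'}(\sig_{q,a,B}(\Tau'))\supseteq C_{\sig'}(\sig)\setminus\{\xi\}$ amounts (via \ref{qpc} and \ref{top80}) to showing that every $\rho\in C_{\sig'}(\sig)\setminus\{\xi\}$ still lies on some cone of $\dd(\sig_{q,a,B}(\Tau'))\cup\ff(\sig_{q,a,B}(\Tau'))$, and this requires a careful analysis of which cones $\psi_{q,a,B}(\tau)$ can contain a given $\sigma\in\ff(\sig)$; the hypothesis that $(q,a,B)$ is good for $\sig$ and $\Tau'$ enters essentially there, and the reverse inclusion together with the removal of $\xi$ uses completeness of $\sig_{q,a,B}(\Tau')/\xi=\Tau'$. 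None of this appears in your text, nor does the (short, but separate) argument for b) based on $\psiquer_{q,a,B}(\Tau')_{\max}\subseteq\sig_{q,a,B}(\Tau')_{\xi}$. Since your proof of c) itself invokes a), the proposal as it stands does not establish the proposition.

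The part you do prove is essentially correct and reasonably close to the intended deduction: the paper obtains c) from a) together with \ref{pulll}~b) (which gives $\dd(\sig_{q,a,B}(\Tau'))=\dd(\sig)\setminus\psiquer_{q,a,B}(\Tau')$ once $\Tau'$, hence $\psiquer_{q,a,B}(\Tau')$, is equifulldimensional), whereas you treat the case $\tau\in\psiquer_{q,a,B}(\Tau')$ directly, using \ref{pull12}~b) and the dimension identity $\dim(\psi_{q,a,B}(\sigma))=\dim(\sigma)+1$ to force $\tau=\psi_{q,a,B}(\sigma)$ with $\sigma\in\Tau'_{\max}$, and then equifulldimensionality of the complete fan $\Tau'$ (\ref{top70}). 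Your reduction of maximality in $\bigcup_{\rho\in T}\sig_{q,a,B}(\Tau')_{\rho}$ to maximality in $\sig_{q,a,B}(\Tau')$, and the observation that a cone of $\sig\setminus\psiquer_{q,a,B}(\Tau')$ cannot contain $\xi$ (so the relevant ray lies in $\sig'_1\setminus C_{\sig'}(\sig)$), are both fine; the dimension identity should be justified in a line ($\dim(\hap^{-1}(\inn_{\Tau}(\sigma)))=\dim(\inn_{\Tau}(\sigma))+1$ and $q(B_{\sigma})+a$ is free of cardinality $\dim(\exte_{\Tau}(\sigma))$), but that is routine. To complete the proposal you must supply proofs of a) and b).
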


\begin{proof}
a) Let $\rho\in C_{\sig'}(\sig)\setminus\{\xi\}$. There is a $\sigma\in\dd(\sig)\cup\ff(\sig)$ with $\rho\fleq\sigma$, and we have to show that there is an $\omega\in\dd(\sig(\Tau'))\cup\ff(\sig(\Tau'))$ with $\rho\fleq\omega$ (\ref{qpc}, \ref{top80}). 

If $\sigma\notin\psiquer(\Tau')$ then $\omega\dfgl\sigma$ fulfils the claim (\ref{pulll}~a)). Suppose $\sigma\in\psiquer(\Tau')$, hence $\sigma\in\ff(\sig)$ (\ref{pulll}~b)). Let $\Xi\dfgl\{\tau\in\Tau'_{n-1}\mid\sigma\fleq\psi(\tau)\}$, and for $\tau\in\Xi$ let $\Xi_{\tau}\dfgl\{\thet\in\psi(\tau)_{n-1}\mid\rho\fleq\thet\}$ so that $\rho=\bigcap\Xi_{\tau}$ (\ref{topprop}~b)). If there are $\tau\in\Xi$ and $\thet\in\Xi_{\tau}\cap\ff(\sig(\Tau'))$ then $\omega\dfgl\thet$ fulfils the claim. Suppose $\Xi_{\tau}\cap\ff(\sig(\Tau'))=\emptyset$ for every $\tau\in\Xi$.

Now, we assume that there is a $\tau\in\Xi\setminus\Tau$, hence for every $\thet\in\Xi_{\tau}$ there is an $\eta^{(\thet)}\in\sig(\Tau')_n$ with $\thet=\eta^{(\thet)}\cap\psi(\tau)$. If $\eta^{(\thet)}\in\psiquer(\Tau')$ and hence $\xi\fleq\eta^{(\thet)}$ for every $\thet\in\Xi_{\tau}$, then we get the contradiction $\xi\fleq\bigcap\Xi_{\tau}=\rho$. So, there is a $\thet\in\Xi_{\tau}$ with $\eta^{(\thet)}\in\sig\setminus\psiquer(\Tau')$, hence $\xi\not\fleq\eta^{(\thet)}\cap\psi(\tau)=\thet$. As $\tau\notin\Tau$ and as $(q,a,B)$ is good for $\sig$ and $\Tau$ we have $\thet=\eta^{(\thet)}\cap\psi(\inn_{\Tau}(\tau))\fleq\psi(\inn_{\Tau}(\tau))\flneq\psi(\tau)$, but then $\dim(\thet)=\dim(\psi(\tau))-1$ yields the contradiction $\xi\fleq\psi(\inn_{\Tau}(\tau))=\thet$. This shows $\Xi\subseteq\Tau$, hence $\psiquer(\Tau')_{\sigma}\subseteq\sig_{\sigma}$, and therefore $\omega\dfgl\sigma$ fulfils the claim (\ref{pulll}~a)).

Conversely, let $\rho\in C_{\sig'}(\sig(\Tau'))$. As $\sig/\rho$ is a $W$-subfan of the noncomplete $W$-fan $\sig(\Tau')/\rho$ it follows $\rho\in C_{\sig'}(\sig)$, and completeness of $\sig(\Tau')/\xi=\Tau'$ (\ref{pulla}~a)) yields $\rho\neq\xi$.

b) As $\psiquer(\Tau')_{\max}\subseteq\sig(\Tau')_{\xi}$ the hypotheses imply $$\textstyle\sig(\Tau')_{\max}\subseteq\sig_{\max}\cup\psiquer(\Tau')_{\max}\subseteq(\bigcup_{\rho\in\sig'_1}\sig_{\rho})\cup\sig(\Tau')_{\xi}\subseteq\bigcup_{\rho\in\sig'_1}\sig(\Tau')_{\rho}.$$

c) follows readily on use of a) and \ref{pulll}~b).
\end{proof}

\begin{lemma}\label{pullexlemma}
Let $\norm$ be a norm on $V$, let $a\in\xi\setminus 0$, and let $\sigma,\tau$ be $W$-polycones with $\sigma\cap\tau\in\face(\sigma)\cap\face(\tau)$ and $\xi\in\face(\tau)\setminus\face(\sigma)$. There is an $\eps_0\in\R_{>0}$ such that for every $\eps\in]0,\eps_0[$ and every finite subset $B\subseteq V$ with $\nrm{b}\leq\eps$ for every $b\in B$ we have $\sigma\cap(\tau+\cone(B+a))=\sigma\cap\tau$.
\end{lemma}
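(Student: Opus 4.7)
My plan is to reduce the claimed equality to one non-trivial inclusion and then establish that inclusion via a strict sign argument for a separating linear form evaluated at $a$.

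One inclusion is immediate and does not depend on smallness of $\eps$: since $0 \in \cone(B+a)$ as the empty conic combination, $\tau \subseteq \tau + \cone(B+a)$, hence $\sigma \cap \tau \subseteq \sigma \cap (\tau + \cone(B+a))$. So only the reverse inclusion requires work.

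For the reverse inclusion, the hypothesis $\sigma \cap \tau \in \face(\sigma) \cap \face(\tau)$ combined with \ref{sep} yields a linear form $u \in V^* \setminus 0$ with $\sigma \subseteq u^{\vee}$, $\tau \subseteq (-u)^{\vee}$, and $\sigma \cap u^{\perp} = \sigma \cap \tau = \tau \cap u^{\perp}$ (the separating hyperplane may be chosen linear since $0 \in \sigma \cap \tau$). The crucial step, and what I expect to be the main obstacle, is to deduce $u(a) < 0$: since $a \in \xi \fleq \tau$, $u(a) \leq 0$, and if $u(a) = 0$ then $a$ -- and hence $\xi = \cone(a)$ -- would lie in $\tau \cap u^{\perp} = \sigma \cap \tau$; combined with $\xi \fleq \tau$ and $\sigma \cap \tau \fleq \tau$, transitivity of $\fleq$ (the standard fact that a face of a face of a polycone is a face) would force $\xi \in \face(\sigma \cap \tau) \subseteq \face(\sigma)$, contradicting $\xi \notin \face(\sigma)$.

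Once $c \dfgl -u(a) > 0$ is in hand, continuity of $u$ provides a constant $C > 0$ with $|u(v)| \leq C \nrm{v}$ for every $v \in V$, and I would set $\eps_0 \dfgl c/C$. For $\eps \in {}]0,\eps_0[$, $B$ as in the statement, and $x \in \sigma \cap (\tau + \cone(B+a))$ written $x = y + \sum_{b \in B} \lambda_b (b+a)$ with $y \in \tau$, $\lambda_b \geq 0$, and $\Lambda \dfgl \sum_b \lambda_b$, the sandwich
\[
0 \leq u(x) = u(y) + \sum_b \lambda_b u(b) - c\Lambda \leq (C\eps - c)\Lambda \leq 0
\]
forces $(C\eps - c)\Lambda = 0$. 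Since $C\eps - c < 0$, this gives $\Lambda = 0$, so every $\lambda_b$ vanishes and $x = y \in \sigma \cap \tau$, closing the argument.
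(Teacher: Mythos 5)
Your proof is correct and follows essentially the same route as the paper: a linear form $u$ separating $\sigma$ and $\tau$ in their intersection (\ref{sep}), the observation that $\xi\notin\face(\sigma)$ forces $u(a)<0$, and a smallness threshold for $B$. The paper merely packages the final step differently, taking $\eps_0=d(a,u^{\vee})$ and showing $B+a\subseteq(-u)^{\vee}$ rather than using your Lipschitz constant and coefficient-sum sandwich, which amounts to the same estimate.
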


\begin{proof}
As $\sigma\cap\tau$ is not full there is a $u\in V^*\setminus 0$ with $\sigma\subseteq u^{\vee}$ and $\tau\subseteq(-u)^{\vee}$ such that $\sigma\cap u^{\perp}=\sigma\cap\tau=\tau\cap u^{\perp}$ (\ref{sep}). If $a\in u^{\vee}$ we get $a\in\xi\cap u^{\vee}\subseteq\tau\cap u^{\perp}\subseteq\sigma$, hence the contradiction $\xi\fleq\sigma$. Therefore, denoting by $d$ the distance on $V$ induced by $\norm$ we have $\eps_0\dfgl d(a,u^{\vee})\in\R_{>0}$. Let $\eps\in]0,\eps_0[$ and let $B\subseteq V$ be finite with $\nrm{b}\leq\eps$ for every $b\in B$. Then, $B+a\subseteq(-u)^{\vee}$, for otherwise there is a $b\in B$ with $b+a\in u^{\vee}$, yielding the contradiction $\eps\geq\nrm{b}=d(a,b+a)\geq\eps_0>\eps$. It follows $\tau+\cone(B+a)\subseteq(-u)^{\vee}$, hence $\sigma\cap\tau\subseteq\sigma\cap(\tau+\cone(B+a))\subseteq\sigma\cap u^{\perp}=\sigma\cap\tau$ and thus the claim.
\end{proof}

\begin{proposition}\label{pullex}
There exists a $W$-pullback datum along $\xi$ that is very good for $\sig$ and every relatively simplicial, separable $W_{\xi}$-extension of $\Tau$.
\end{proposition}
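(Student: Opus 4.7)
The plan is to fix a $W$-rational Hilbert norm on $V$, which exists by~\ref{hilb}, take $q$ to be the associated $(W_{\xi},W)$-rational section of $p$ whose image is the orthogonal complement of $\langle\xi\rangle$, and pick any $a\in W\cap\xi\setminus 0$ (possible since $\xi$ is a $W$-rational line). The task then reduces to choosing the family $B=(b_{\rho})_{\rho\in\Lambda}$ so that both defining conditions of ``very good'' hold uniformly over every relatively simplicial, separable $W_{\xi}$-extension $\Tau'$ of $\Tau$. The key observation is that~\ref{pullexlemma} provides, for each pair $(\sigma,\sigma')\in(\sig\setminus\sig_{\xi})\times\sig_{\xi}$, a positive threshold $\eps_{0}(\sigma,\sigma')$ such that any finite $B'\subseteq V$ with $\nrm{b}<\eps_{0}(\sigma,\sigma')$ for all $b\in B'$ satisfies $\sigma\cap(\sigma'+\cone(B'+a))=\sigma\cap\sigma'$. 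Taking $\eps_{1}$ to be the minimum of these finitely many values, I would choose for each $\rho\in\Lambda$ a vector $b_{\rho}\in W_{\xi}\cap\rho\setminus 0$ with $\nrm{q(b_{\rho})}<\eps_{1}$, which is possible by scaling a chosen generator of $\rho$ by a sufficiently small positive rational.

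To verify goodness, let $\sigma\in\sig\setminus\psiquer_{q,a,B}(\Tau')$ and $\tau\in\Tau'$. Since $\sig_{\xi}=\hap^{-1}(\Tau)\subseteq\psiquer_{q,a,B}(\Tau')$, it holds $\xi\not\fleq\sigma$. The case $\tau\in\Tau$ is immediate; otherwise, setting $\sigma'\dfgl\hap^{-1}(\inn_{\Tau}(\tau))\in\sig_{\xi}$, the identity $\psi_{q,a,B}(\tau)=\sigma'+\cone(q(B_{\tau})+a)$ reduces the good condition to $\sigma\cap(\sigma'+\cone(q(B_{\tau})+a))=\sigma\cap\sigma'$, which follows directly from~\ref{pullexlemma} and the choice of $\eps_{1}$.

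Verifying freeness over $\sig$ splits naturally. For $\tau\in\Tau$ the set $B_{\tau}$ is empty and $\cone(q(B_{\tau})+a)=0$ is trivially free over $\sig$. For $\tau\in\Tau'\setminus\Tau$ and $\sigma\in\sig\setminus\sig_{\xi}$, applying~\ref{pullexlemma} to the pair $(\sigma,\xi)$ yields $\cone(q(B_{\tau})+a)\cap\sigma\subseteq\sigma\cap(\xi+\cone(q(B_{\tau})+a))=\sigma\cap\xi=0$. For $\tau\in\Tau'\setminus\Tau$ and $\sigma\in\sig_{\xi}$, any $x\in\cone(q(B_{\tau})+a)\cap\sigma$ has $p(x)\in\cone(B_{\tau})\cap(\sigma/\xi)\subseteq\exte_{\Tau}(\tau)\cap(\sigma/\xi)=0$ by the freeness of $\exte_{\Tau}(\tau)$ over $\Tau$ (from separability of $\Tau'$); writing $x=\sum_{\rho}\lambda_{\rho}(q(b_{\rho})+a)$, the vanishing $p(x)=\sum_{\rho}\lambda_{\rho}b_{\rho}=0$ combined with linear independence of $\{b_{\rho}\}_{\rho\in\exte_{\Tau}(\tau)_{1}}$ (from simpliciality of $\exte_{\Tau}(\tau)$, guaranteed by relative simpliciality of $\Tau'$) forces all $\lambda_{\rho}=0$, whence $x=0$.

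The main subtle point to appreciate is that freeness over cones $\sigma\in\sig_{\xi}$ is not a small-perturbation phenomenon at all---no norm control on the $b_{\rho}$ enters---but a purely algebraic consequence of the separability and relative simpliciality of $\Tau'$; the $\eps$-arguments of~\ref{pullexlemma} are only needed for cones $\sigma\notin\sig_{\xi}$. A further delicate point is that the choice of $(b_{\rho})_{\rho\in\Lambda}$ must be made independently of $\Tau'$ and yet work for all $\Tau'$ simultaneously; this succeeds because the threshold $\eps_{0}$ in~\ref{pullexlemma} is uniform over all finite perturbations $B'$ of bounded norm, so one choice of sufficiently small family $(b_{\rho})_{\rho\in\Lambda}$ handles every valid extension $\Tau'$ at once.
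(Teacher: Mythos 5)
Your proposal is correct and follows essentially the same route as the paper: a $W$-rational Hilbert norm giving the orthogonal section $q$, a choice of $a\in W\cap\xi\setminus 0$, and generators $b_{\rho}$ made small enough (uniformly over all $\Tau'$, since only finitely many pairs of cones of $\sig$ are involved) that \ref{pullexlemma} yields goodness, with freeness over $\sig_{\xi}$ obtained from separability and relative simpliciality of $\Tau'$. The only minor deviation is that for $\sigma\in\sig\setminus\sig_{\xi}$ you obtain freeness of $\cone(q(B_{\tau})+a)$ by applying \ref{pullexlemma} to the pair $(\sigma,\xi)$, whereas the paper uses a separate threshold $\eps_{0}\leq d(a,\sigma)$ and a convex-combination estimate; both arguments are valid, and yours slightly economizes by reusing the lemma.
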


\begin{proof}
We can choose a $W$-rational Hilbert norm $\norm$ on $V$ which induces a $W_{\xi}$-rational Hilbert norm $\norm'$ on $V_{\xi}$ and defines a $(W_{\xi},W)$-rational section $q:V_{\xi}\rightarrowtail V$ of $p$, inducing by coastriction the canonical isomorphism of $\R$-Hilbert spaces from $V_{\xi}$ onto the orthogonal complement of $\langle\xi\rangle$ in $V$ with respect to $\norm$ (\ref{hilb}). Let $d$ denote the distance on $V$ induced by $\norm$. We can choose an $a\in W\cap\xi\setminus 0$. If $\sigma\in\sig\setminus\sig_{\xi}$ then $d(a,\sigma)>0$, hence there is an $\eps_0\in\R_{>0}$ such that if $\sigma\in\sig\setminus\sig_{\xi}$ then $\eps_0\leq d(a,\sigma)$. For $\sigma\in\sig\setminus\sig_{\xi}$ and $\tau\in\sig_{\xi}$ there is an $\eps_{\sigma,\tau}\in\R_{>0}$ such that for every $\eps\in]0,\eps_{\sigma,\tau}[$ and every finite subset $B\subseteq W_{\xi}$ with $\nrm{b}'\leq\eps$ for every $b\in B$ we have $\sigma\cap(\tau+\cone(q(B)+a))=\sigma\cap\tau$ (\ref{pullexlemma}). Moreover, there is an $\eps_1\in\R_{>0}$ such that if $\sigma\in\sig\setminus\sig_{\xi}$ and $\tau\in\sig_{\xi}$ then $\eps_1\leq\eps_{\sigma,\tau}$. For every $\rho\in\Lambda$ there is a $b_{\rho}\in W_{\xi}\cap\rho\setminus 0$ with $\nrm{b_{\rho}}'<\min\{\eps_0,\eps_1\}$, for $W_{\xi}$ is dense in $V_{\xi}$. Setting $B\dfgl(b_{\rho})_{\rho\in\Lambda}$ it is clear that $(q,a,B)$ is a $W$-pullback datum along $\xi$.

Let $\Tau\subseteq\Tau'$ be a relatively simplicial, separable $W_{\xi}$-extension. Let $\tau\in\Tau'$ and let $\sigma\in\sig\setminus\psiquer(\Tau')\subseteq\sig\setminus\sig_{\xi}$. As $\psi(\inn_{\Tau}(\tau))\in\sig_{\xi}$ and as $B_{\tau}\subseteq W_{\xi}$ is a finite subset with $\nrm{b}'<\eps_1\leq\eps_{\sigma,\psi(\inn_{\Tau}(\tau))}$ for every $b\in B_{\tau}$ we get $\sigma\cap\psi(\tau)=\sigma\cap\psi(\inn_{\Tau}(\tau))$. Therefore, $(q,a,B)$ is good for $\sig$ and $\Tau'$.

Next, let $\tau\in\Tau'$. We will show that $\omega\dfgl\cone(q(B_{\tau})+a)$ is free over $\sig$. If $\sigma\in\sig_{\xi}$ then since $\exte_{\Tau}(\tau)$ is free over $\Tau$ we get $$\omega\cap\sigma=\omega\cap\psi(\exte_{\Tau}(\tau))\cap\psi(p(\sigma))=\omega\cap\psi(\exte_{\Tau}(\tau)\cap p(\sigma))=\omega\cap\xi=0$$ (\ref{pull12}~a)). So, let $\sigma\in\sig\setminus\sig_{\xi}$ and assume that there is an $x\in\omega\cap\sigma\setminus 0$. There is an $r\in\R_{>0}$ with $rx\in\conv(q(B_{\tau})+a)\cap\sigma$, hence a family $(r_{\rho})_{\rho\in\exte_{\Tau}(\tau)_1}$ in $\R_{\geq 0}$ with $\sum_{\rho\in\exte_{\Tau}(\tau)_1}r_{\rho}=1$ and $rx=\sum_{\rho\in\exte_{\Tau}(\tau)_1}r_{\rho}(q(b_{\rho})+a)$. This yields the contradiction $$\textstyle d(a,rx)=\nrm{rx-a}=\nrm{q(\sum_{\rho\in\exte_{\Tau}(\tau)_1}r_{\rho}b_{\rho})}=$$$$\textstyle\nrm{\sum_{\rho\in\exte_{\Tau}(\tau)_1}r_{\rho}b_{\rho}}'\leq\sum_{\rho\in\exte_{\Tau}(\tau)_1}r_{\rho}\nrm{b_{\rho}}'<\eps_0\leq d(a,\sigma)\leq d(a,rx).$$ So, $\omega\cap\sigma=0$, and thus $(q,a,B)$ is very good for $\sig$ and $\Tau'$.
\end{proof}


\section{Existence of completions}\label{sec9}

Finally we put everything together and prove our main result.\medskip

\begin{lemma}\label{8.10}
Let $n>1$, let $\sig$ be a $W$-fan with $\sig_1\neq\emptyset$, and suppose that for every $\R$-vector space $V'$ with $\dim(V')<n$, every $K$-structure $W'$ on $V'$ and every $W'$-fan $\sig'$ there exists a strong $W'$-completion of $\sig'$. There exists an increasing sequence $(\sig^{(i)})_{i\in\Nn}$ of relatively simplicial, tightly separable $W$-extensions of $\sig$ such that if $i\in\Nn$ then $c_{\sig}(\sig^{(i+1)})<\max\{c_{\sig}(\sig^{(i)}),1\}$, that every maximal element of $\sig^{(i)}$ lies in $\bigcup_{\rho\in\sig_1}\sig^{(i)}_{\rho}$, and that every maximal element of $\bigcup_{\rho\in\sig_1\setminus C_{\sig}(\sig^{(i)})}\sig^{(i)}_{\rho}$ is full.
\end{lemma}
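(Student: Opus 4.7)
The plan is to construct the sequence recursively. Set $\sig^{(0)}\dfgl\sig$, and given $\sig^{(i)}$ satisfying the four asserted properties, produce $\sig^{(i+1)}$ as follows. If $C_\sig(\sig^{(i)})=\emptyset$ set $\sig^{(i+1)}\dfgl\sig^{(i)}$, which trivially inherits everything and yields $c_\sig(\sig^{(i+1)})=0<1=\max\{0,1\}$. Otherwise choose some $\xi\in C_\sig(\sig^{(i)})\subseteq\sig_1$, apply the standing hypothesis to the $W_\xi$-fan $\sig^{(i)}/\xi$ in the $(n-1)$-dimensional space $V_\xi$ to obtain a strong $W_\xi$-completion $(\sigquer,\sighut)$, invoke \ref{pullex} to get a very good $W$-pullback datum $(q,a,B)$ along $\xi$ for $\sig^{(i)}$ and $\sighut$, and define $\sig^{(i+1)}\dfgl\sig^{(i)}_{q,a,B}(\sighut)$.

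The base case $\sig^{(0)}=\sig$ is immediate: relative simpliciality and tight separability of $\sig$ over itself are trivial; every maximal cone of $\sig$ has positive dimension (since $\sig_1\neq\emptyset$) and hence contains some ray, giving the location property; and if $\rho\in\sig_1\setminus C_\sig(\sig)$ then $\sig/\rho$ is complete, hence equifulldimensional by \ref{top70}, so every maximal $\tau\in\sig_\rho$ has $\dim(\tau)=n$ via the dimension-preserving bijection $\hap_\rho$ of \ref{proj}.

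For the inductive step, observe first that $\sighut$ is complete (hence equifulldimensional), relatively simplicial over $\sig^{(i)}/\xi$ by the definition of strong completion, and separable over $\sig^{(i)}/\xi$ by \ref{quasilemma} applied to the separable quasipacking $\sigquer$. The four asserted properties for $\sig^{(i+1)}$ then follow mechanically from the results of Section \ref{sec6}.C, applied with base fan $\sig^{(i)}$ and distinguished subfan $\sig$: Proposition \ref{pulla}~b) gives relative simpliciality of $\sig\subseteq\sig^{(i+1)}$; Proposition \ref{pullb} (using equifulldimensionality of $\sighut$) gives tight separability; Proposition \ref{pullc}~a) (using completeness of $\sighut$) yields $C_\sig(\sig^{(i+1)})=C_\sig(\sig^{(i)})\setminus\{\xi\}$, so $c_\sig$ strictly decreases; Proposition \ref{pullc}~b) (using $\xi\in\sig$ and the inductive location property for $\sig^{(i)}$) preserves the location of maximal cones inside $\bigcup_{\rho\in\sig_1}\sig^{(i+1)}_\rho$; and Proposition \ref{pullc}~c) (using completeness of $\sighut$ together with the inductive fullness property) preserves the final fullness property.

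The main obstacle is nothing deep once Section \ref{sec6} is in hand; it is rather the bookkeeping required to keep all conditions aligned through the induction. In particular, $\xi$ must be chosen in $\sig_1$ (not merely in $\sig^{(i)}_1$) so that the $\sig_1$-indexed properties continue to be indexed by the same set after the step, and the pullback datum must be \emph{very} good (not merely good) so that tight separability actually lifts via \ref{pullb}. With those two choices made correctly, the recursive construction runs without further difficulty.
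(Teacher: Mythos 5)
Your proposal is correct and follows essentially the same route as the paper: a recursion starting from $\sig^{(0)}=\sig$, choosing $\xi\in C_{\sig}(\sig^{(i)})$, using the dimension hypothesis to obtain a strong $W_{\xi}$-completion of the projected fan, pulling it back along $\xi$ via a very good pullback datum from \ref{pullex}, and verifying the four properties with \ref{pulla}, \ref{pullb} and \ref{pullc} applied to the subfan $\sig\subseteq\sig^{(i)}$. The only divergence is that you complete $\sig^{(i)}/\xi$ where the paper's text nominally writes $\sig/\xi$; your reading is the one the pullback machinery (which needs an extension of $\sig^{(i)}/\xi$) actually requires, so this is exactly the intended argument.
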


\begin{proof}
We construct such a sequence by recursion starting with $\sig^{(0)}\dfgl\sig$. Let $i\in\Nn$ and suppose that there is a sequence $(\sig^{(j)})_{j=0}^i$ with the desired properties. If $c_{\sig}(\sig^{(i)})=0$ then we may set $\sig^{(i+1)}\dfgl\sig^{(i)}$. Otherwise there is a $\xi\in C_{\sig}(\sig^{(i)})$ so that the $W_{\xi}$-fan $\Tau\dfgl\sig/\xi$ is noncomplete. By hypothesis there exists a strong $W_{\xi}$-completion $(\tauquer,\tauhut)$ of $\Tau$, and $\tauhut$ is relatively simplicial and separable over $\Tau$ (\ref{quasilemma}). So, there exists a $W$-pullback datum $(q,a,B)$ along $\xi$ that is very good for $\sig$ and $\tauhut$ (\ref{pullex}), and $\sig^{(i+1)}\dfgl\sig^{(i)}_{q,a,B}(\tauhut)$ is a $W$-fan as desired (\ref{pulla}, \ref{pullb}, \ref{pullc}).
\end{proof}

\begin{lemma}\label{8.20}
Let $\sig$ be a $W$-fan with $\sig_1\neq\emptyset$. If $\sig$ has a $W$-packing then it has a strong $W$-completion.
\end{lemma}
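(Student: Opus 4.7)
The plan is to take any $W$-packing $\sigquer$ of $\sig$, cover $\cl(V \setminus |\sigquer|)$ by a simplicial $W$-fan whose restriction to $\fr(|\sigquer|)$ subdivides $\fff(\sigquer)$, and glue the two sides by means of the adjustment construction. Fix a $W$-packing $\sigquer$ of $\sig$; since $\sig_1 \neq \emptyset$ forces $\sigquer_1 \neq \emptyset$, the definition of a packing shows that $\sigquer$ is equifulldimensional. If $\sigquer$ is complete, $(\sigquer, \sigquer)$ is already a strong $W$-completion of $\sig$ by \ref{scexas}~b); so assume $\sigquer$ is noncomplete. Choose a $W$-separating family $H$ for $\sigquer$ and apply \ref{comp30} to produce the $W$-subfan $\tauquer \dfgl \{\sigma \in \omegaq_{\sigquer,H} \mid \sigma \subseteq \cl(V \setminus |\sigquer|)\}$ of the complete $W$-fan $\omegaq_{\sigquer,H}$, with $|\tauquer| = \cl(V \setminus |\sigquer|)$, together with a $W$-subfan $\tauquer' \subseteq \tauquer$ that subdivides $\fff(\sigquer)$.

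Next I would pass to a simplicial strict $W$-subdivision $\tauhut$ of $\tauquer$ (using \ref{sub10}) and let $\sighut'$ be the subdivision of $\tauquer'$ induced by $\tauhut$, a simplicial $W$-subdivision of $\fff(\sigquer)$ that is a subfan of $\tauhut$. Because $\sigquer$ is an equifulldimensional, tightly separable $W$-quasipacking of $\sig$, \ref{fff} yields $\exte_{\sig}(\sigquer) = \fff(\sigquer)$; this is the precise hypothesis needed to invoke \ref{adjthm3} on the data $(\sig \subseteq \sigquer,\; \fff(\sigquer),\; \sighut',\; \tauhut)$, where the required compatibility $|\sigquer| \cap |\tauhut| = \fr(|\sigquer|) = |\fff(\sigquer)|$ follows from \ref{top70}. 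The outcome is $\sighut \dfgl \adj_{\sig}(\sigquer, \sighut') \cup \tauhut$, a $W$-extension of $\sigquer'' \dfgl \adj_{\sig}(\sigquer, \sighut')$ with $|\sighut| = V$.

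Finally I would verify that $(\sigquer'', \sighut)$ is a strong $W$-completion of $\sig$: by \ref{adjthm2}~d) the fan $\sigquer''$ is a $W$-packing of $\sig$ (since $\sigquer$ is one and $\sighut'$ is simplicial), $\sighut$ is a $W$-completion of $\sigquer''$ by construction, and $\sighut$ is relatively simplicial over $\sig$ because cones in $\tauhut$ are simplicial outright while cones in $\sigquer''$ that happen to be free over $\sig$ are simplicial by \ref{adjthm2}~a). The main subtlety is the equality $\exte_{\sig}(\sigquer) = \fff(\sigquer)$ on which \ref{adjthm3} rests; this is exactly what the tightly-separable quasipacking hypothesis was engineered to guarantee, via \ref{fff}, and it is the bridge that allows the complementary fan produced by \ref{comp30} to be glued onto $\sigquer$ without disturbing the original $\sig$.
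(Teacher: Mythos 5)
Your proposal is correct and follows essentially the same route as the paper: take a packing $\sigquer$, use a $W$-separating family and \ref{comp30} to get the complementary fan on $\cl(V\setminus|\sigquer|)$ together with the subdivision of $\fff(\sigquer)$, pass to a simplicial strict subdivision via \ref{sub10}, and glue by the adjustment construction using $\exte_{\sig}(\sigquer)=\fff(\sigquer)$ from \ref{fff} and \ref{adjthm1}--\ref{adjthm3}. The only differences are cosmetic (notation and the explicit remark that $\sig_1\neq\emptyset$ forces equifulldimensionality of $\sigquer$), so nothing further is needed.
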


\begin{proof}
Let $\sigquer$ be a $W$-packing of $\sig$, necessarily equifulldimensional. If $\sigquer$ is complete then $(\sigquer,\sigquer)$ is a strong $W$-completion of $\sig$ (\ref{scexas}~b)). Therefore, we suppose that $\sigquer$ is noncomplete. We can choose a $W$-separating family $H$ for $\sig$ (\ref{comp20}) and consider the complete $W$-semifan $\omegaq\dfgl\omegaq_{\sigquer,H}$ associated with $\sigquer$ and $H$. Then, $\Tau\dfgl\{\sigma\in\omegaq\mid\sigma\subseteq\cl(V\setminus|\sigquer|)\}$ is a $W$-fan with $|\sigquer|\cup|\Tau|=V$ and $|\sigquer|\cap|\Tau|=|\fff(\sigquer)|$, and $\Tau'\dfgl\{\sigma\in\omegaq\mid\sigma\subseteq\fr(|\sigquer|)\}$ is a $W$-subfan of $\Tau$ and a $W$-subdivision of $\fff(\sigquer)$ (\ref{comp30}, \ref{top80}). We can choose a simplicial $W$-subdivision $\tauquer$ of $\Tau$, inducing a simplicial $W$-subdivision $\tauquer'$ of $\Tau'$, hence of $\fff(\sigquer)$ (\ref{sub10}). As $\fff(\sigquer)=\exte_{\sig}(\sigquer)$ (\ref{fff}) it follows that $(\adj_{\sig}(\sigquer,\tauquer'),\adj_{\sig}(\sigquer,\tauquer')\cup\tauquer)$ is a strong $W$-completion of $\sig$ (\ref{adjthm2}~d), \ref{adjthm1}~b), \ref{adjthm3}).
\end{proof}

\begin{theorem}\label{8.30}
Every $W$-fan has a strong $W$-completion.
\end{theorem}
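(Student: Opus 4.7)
The plan is to prove the theorem by induction on $n=\dim(V)$, invoking Lemma \ref{8.10} to produce a packing and then Lemma \ref{8.20} to upgrade it to a strong completion.

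For the base cases, if $n=0$ the only $W$-fans are $\emptyset$ and $\{0\}$, which are already complete so that $(\sig,\sig)$ is a strong $W$-completion (\ref{scexas}~b)). If $n=1$, then either $\sig$ is complete, or $\sig_1=\emptyset$ so that Example \ref{scexas}~c) applies, or we reduce to the situation treated in Example \ref{scexas}~d). For the inductive step, let $n>1$ and assume the statement holds for every $K$-structure on every $\R$-vector space of dimension strictly less than $n$. Given a $W$-fan $\sig$ in $V$, first dispose of the case $\sig_1=\emptyset$ by invoking Example \ref{scexas}~c). So we may assume $\sig_1\neq\emptyset$, which means the hypotheses of Lemma \ref{8.10} are satisfied.

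Apply Lemma \ref{8.10} to obtain an increasing sequence $(\sig^{(i)})_{i\in\Nn}$ of relatively simplicial, tightly separable $W$-extensions of $\sig$ with $c_{\sig}(\sig^{(i+1)})<\max\{c_{\sig}(\sig^{(i)}),1\}$. This inequality forces the sequence of nonnegative integers $c_{\sig}(\sig^{(i)})$ to reach $0$ in finitely many steps: as long as $c_{\sig}(\sig^{(i)})\geq 2$ the value strictly decreases, and once $c_{\sig}(\sig^{(i)})\in\{0,1\}$ the next term must be strictly less than $1$, hence equal to $0$. Fix an index $i_0$ with $c_{\sig}(\sig^{(i_0)})=0$, so that $C_{\sig}(\sig^{(i_0)})=\emptyset$. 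By \ref{qpc} this means $\sig^{(i_0)}$ is a $W$-quasipacking of $\sig$. Furthermore, the two auxiliary properties ensured by Lemma \ref{8.10} combine to give equifulldimensionality: every maximal cone of $\sig^{(i_0)}$ lies in $\bigcup_{\rho\in\sig_1}\sig^{(i_0)}_\rho=\bigcup_{\rho\in\sig_1\setminus C_{\sig}(\sig^{(i_0)})}\sig^{(i_0)}_\rho$, and every maximal element of the latter union is full. Together with relative simpliciality and tight separability over $\sig$, this shows that $\sig^{(i_0)}$ is a $W$-packing of $\sig$.

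Finally, since $\sig_1\neq\emptyset$ and $\sig$ admits the $W$-packing $\sig^{(i_0)}$, Lemma \ref{8.20} delivers a strong $W$-completion of $\sig$, closing the induction. The main work has already been done in the previous sections; the only real obstacle in this last step is verifying the equifulldimensionality of $\sig^{(i_0)}$, which is precisely why the auxiliary maximality conditions were built into the statement of Lemma \ref{8.10}.
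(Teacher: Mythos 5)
Your proof is correct and takes essentially the same route as the paper: induction on $n$, Lemma \ref{8.10} to produce the sequence, the observation that $c_{\sig}(\sig^{(i)})$ must reach $0$, then \ref{qpc} together with the two auxiliary maximality conditions to get equifulldimensionality and hence a $W$-packing, and finally Lemma \ref{8.20}. (Only a cosmetic slip: for $n=0$ the empty fan is not complete, but that case is covered anyway by \ref{scexas}~c) since $\sig_1=\emptyset$.)
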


\begin{proof}
Let $\sig$ be a $W$-fan. If $\sig_1=\emptyset$ then $\sig$ has a strong $W$-completion by \ref{scexas}~c). We suppose $\sig_1\neq\emptyset$, hence $n\geq 1$, and show the claim by induction on $n$. If $n=1$ it holds by \ref{scexas}~b), d). Let $n>1$ and suppose the claim to hold for strictly smaller values of $n$. Let $(\sig^{(i)})_{i\in\Nn}$ be a sequence as proven to exist in \ref{8.10}. There is an $i\in\Nn$ with $c_{\sig}(\sig^{(i)})=0$, so that $\sig^{(i)}$ is a relatively simplicial, tightly separable $W$-quasipacking of $\sig$ (\ref{qpc})). Since its maximal elements are maximal elements of $\bigcup_{\rho\in\sig_1\setminus C_{\sig}(\sig^{(i)})}\sig^{(i)}_{\rho}$, hence full, it is equifulldimensional and therefore a $W$-packing of $\sig$. Now, the claim follows from \ref{8.20}.
\end{proof}

\begin{corollary}
Every (simplicial) $W$-semifan has a (simplicial) $W$-comple\-tion.
\end{corollary}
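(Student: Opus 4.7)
The plan is to reduce the corollary to Theorem \ref{8.30}, which handles $W$-fans, by using the projection technique from \ref{proj}. Let $\sig$ be a $W$-semifan. I treat two cases separately.

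First, if $\sig=\emptyset$, it suffices to produce any complete $W$-semifan (or even a complete $W$-fan) in $V$, since such an object is trivially a $W$-extension of $\emptyset$. Choosing any basis $(e_i)_{i=1}^n$ of $V$ contained in $W$ together with $e_0\dfgl-\sum_i e_i$, the set of cones generated by proper subsets of $\{e_0,\dots,e_n\}$ together with their faces is a complete simplicial $W$-fan; this takes care of both the general and the simplicial sub-claims in this case (with the convention that the empty semifan is vacuously simplicial).

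Second, if $\sig\neq\emptyset$, then the repeated application of the defining property $\sigma\cap\tau\in\face(\sigma)\cap\face(\tau)\subseteq\sig$ yields $s(\sig)=\bigcap\sig\in\sig$. By \ref{proj}, $\sig/s(\sig)$ is then a $W_{s(\sig)}$-fan in $V_{s(\sig)}$, and there is a canonical bijection between the $W$-completions of $\sig$ and the $W_{s(\sig)}$-completions of $\sig/s(\sig)$. Apply Theorem \ref{8.30} to $\sig/s(\sig)$ to obtain a strong $W_{s(\sig)}$-completion $(\sigquer,\sighut)$ of $\sig/s(\sig)$; in particular $\sighut$ is a $W_{s(\sig)}$-completion of $\sig/s(\sig)$, which by the bijection lifts to a $W$-completion of $\sig$.

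For the simplicial part of the statement, note that any simplicial $W$-polycone is sharp, so if $\sig$ is a nonempty simplicial $W$-semifan then $s(\sig)$ is sharp; but $s(\sig)\fleq\sigma$ for all $\sigma\in\sig$ and the smallest face of a sharp cone is $\{0\}$, forcing $s(\sig)=0$. Hence $\sig/s(\sig)=\sig$ is already a simplicial $W$-fan. The strong $W$-completion $(\sigquer,\sighut)$ supplied by Theorem \ref{8.30} has $\sigquer$ a $W$-packing (hence in particular a $W$-quasipacking separable over $\sig$) and $\sighut$ a $W$-extension of $\sigquer$ that is relatively simplicial over $\sig$. By \ref{quasilemma}, $\sighut$ is separable over $\sig$, so by the remark in \ref{pack20} that a separable relatively simplicial $W$-extension of a simplicial $W$-fan is simplicial, $\sighut$ is a simplicial $W$-completion of $\sig$.

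There is no genuine obstacle here: the corollary is a formal consequence of Theorem \ref{8.30}, with the only wrinkles being the passage from semifans to fans via $s(\sig)$ (handled by \ref{proj}) and the observation that the completion produced by \ref{8.30} automatically inherits simpliciality via the separability argument above.
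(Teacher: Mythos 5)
Your argument is correct and is essentially the paper's own proof spelled out: pass to the fan $\sig/s(\sig)$ via \ref{proj}, apply Theorem \ref{8.30}, lift the completion back through the bijection of \ref{proj}, and obtain simpliciality of $\sighut$ from \ref{quasilemma} together with the remark in \ref{pack20}. Your explicit construction of a complete simplicial $W$-fan for the empty case is a harmless addition (the paper handles that case through \ref{8.30} and \ref{scexas}~c)).
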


\begin{proof}
This is clear by \ref{proj}, \ref{8.30}, \ref{quasilemma} and \ref{pack20}.
\end{proof}


\smallskip

\subsection*{Acknowledgement}

I thank Markus Brodmann, Stefan Fumasoli, and {\fontencoding{T5}\selectfont Ng\ocircumflex{} Vi\d\ecircumflex{}t Trung} for their help during the writing of this article.


\end{document}